\numberwithin{equation}{section}
\theoremstyle{plain}
\newtheorem{theorem}{Theorem}[section]
\newtheorem{lemma}[theorem]{Lemma}
\newtheorem{proposition}[theorem]{Proposition}
\newtheorem{corollary}[theorem]{Corollary}
\theoremstyle{definition}
\newtheorem{definition}[theorem]{Definition}
\newtheorem{example}[theorem]{Example}
\newtheorem{remark}[theorem]{Remark}
\definecolor{deepgreen}{rgb}{0.1,0.6,0.1}
\newcommand{\hh}{\mathbb H}
\newcommand{\diver}{{\mathrm{div}}}
\newcommand{\G}{\mathbb{G}}
\newcommand{\g}{\mathfrak g}
\newcommand{\f}{\mathfrak f}
\newcommand{\m}{\mathfrak m}
\newcommand{\vg}{\nu^\G}
\newcommand{\otdp}{\left(\Delta_S\right)_p}
\newcommand{\otdgt}{\left(\Delta_S\right)_{\gamma(t)}}
\newcommand{\otd}{\Delta_S}
\newcommand{\J}{\mathbf{J}}
\newcommand{\rr}{\mathbb R}
\renewcommand{\v}{\nu^{\mathbb G}}
\newcommand{\leb}{\mathcal{L}}
\newcommand{\lie}{\mathrm{Lie}}
\newcommand{\otc}[1]{\left(\Delta_S\right)_{#1}}
\newcommand{\N}{\mathbb{N}}
\newcommand{\R}{\mathbb{R}}
\newcommand{\spann}{\mathrm{span}}
\newcommand{\average}{{\mathchoice {\kern1ex\vcenter{\hrule height.4pt
				width 6pt
				depth0pt} \kern-9.7pt} {\kern1ex\vcenter{\hrule height.4pt width 4.3pt
				depth0pt}
			\kern-7pt} {} {} }}
\newcommand{\ave}{\average\int}
\newcommand{\Om}{\Omega}
\newcommand{\rank}{\mathrm{rank}}
\newcommand{\Span}{\mathrm{span}}
\newcommand{\Nil}{\mathrm{Nil}}
\newcommand{\nil}{\mathfrak{nil}}
\newcommand{\order}{\mathrm{order}}
\newcommand{\sk}{\mathrm{Skew}}
\newcommand{\id}{\mathrm{id}}
\DeclarePairedDelimiter{\set}{\{}{\}}
\DeclareMathOperator{\Lie}{Lie}
\newcommand{\df}{\mathrm{d}}
\DeclareMathOperator{\divv}{div}
\begin{document}

\title[hypergenerated groups]{hypergenerated Carnot groups}

 \author[E. Le Donne]{Enrico Le Donne}
\address[E. Le Donne]{University of Fribourg, Chemin du Mus\'ee~23, 1700 Fribourg, Switzerland}
\email{\href{mailto:enrico.ledonne@unifr.ch}{enrico.ledonne@unifr.ch}}
	\author[L. Nalon]{Luca Nalon}
 \address[L. Nalon]{University of Fribourg, Chemin du Mus\'ee~23, 1700 Fribourg, Switzerland}
\email{\href{mailto:luca.nalon@unifr.ch}{luca.nalon@unifr.ch}}
	\author[N. Paddeu]{Nicola Paddeu}
 \address[N. Paddeu]{University of Fribourg, Chemin du Mus\'ee~23, 1700 Fribourg, Switzerland  
}
\email{\href{mailto:nicola.paddeu@unifr.ch}{nicola.paddeu@unifr.ch}}

\author[S.~Verzellesi]{Simone Verzellesi}
\address[S.~Verzellesi]{Dipartimento di Matematica "Tullio Levi-Civita", Università degli Studi di Padova, via Trieste 63, 35131 Padova (PD), Italy}
\email{\href{mailto:simone.verzellesi@unipd.it}{simone.verzellesi@unipd.it}}

\date{\today}

\keywords{Carnot groups}

\subjclass[2020]{53C17, 22E15}

\thanks{\textit{Memberships and funding information.} E. Le Donne, L. Nalon, and N. Paddeu were partially supported by the Swiss National Science Foundation
	(grant 200021-204501 `\emph{Regularity of sub-Riemannian geodesics and applications}'). S. Verzellesi is member of the Istituto Nazionale di Alta Matematica (INdAM), Gruppo Nazionale per l'Analisi Matematica, la Probabilità e le loro Applicazioni (GNAMPA). S. Verzellesi is supported by MIUR-PRIN 2022 Project \emph{Regularity problems in sub-Riemannian structures},  project code 2022F4F2LH, and by the INdAM-GNAMPA 2025 Project \emph{Structure of sub-Riemannian hypersurfaces in Heisenberg groups}, CUP ES324001950001.}

\begin{abstract}
    In this paper we provide an algebraic characterization of those stratified groups in which boundaries with locally constant normal are locally flat. We show that these groups, which we call \emph{hypergenerated}, are exactly the stratified groups where embeddings of non-characteristic hypersurfaces are locally bi-Lipschitz. Finally, we extend these results to submanifolds of arbitrary codimension.
\end{abstract}

\bibliographystyle{abbrv}
\maketitle

\tableofcontents

\section{Introduction}
 
 In this paper 
 we systematically introduce a class of Carnot groups which we call hypergenerated. Namely, a Carnot group is \emph{hypergenerated} if all its vertical hyperplanes are Carnot subgroups. We refer to \cref{carnot_groups} for the definition of Carnot group and vertical hyperplane. Moreover, we refer to \cref{subsec:plentiful} for a comprehensive treatment of hypergenerated groups and Lie algebras. We show that the algebraic property of being hypergenerated captures features relevant for both the regularity of minimal boundaries and the metric properties of non-characteristic hypersurfaces. More precisely, we prove the following characterization.

\begin{theorem}
\label{thm:main-theorem}
 
     Let $\G$ be a Carnot group. The following are equivalent:
     \begin{enumerate}
         \item the group $\G$ is hypergenerated,
         \item all finite-perimeter sets with locally constant normal are vertical hyperplanes,
         \item on all smooth non-characteristic hypersurfaces in $\G$, the intrinsic distance and the restricted distance are locally bi-Lipschitz equivalent.
     \end{enumerate}
   
 \end{theorem}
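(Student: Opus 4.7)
The plan is to establish the two equivalences $(1)\Leftrightarrow (2)$ and $(1)\Leftrightarrow (3)$ separately, each translating the hypergenerated condition into a distinct geometric fact.

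For $(1)\Leftrightarrow (2)$ I would rely on the structure result that a finite-perimeter set $E\subset\G$ whose horizontal normal is locally equal to a fixed $\nu\in V_1$ is locally left-invariant under the Lie subgroup $H_\nu\leq \G$ generated by $\nu^{\perp}\cap V_1$ (the Carnot analogue of the Ambrosio--Kleiner--Le Donne rigidity of sets of constant normal). Assuming $\G$ hypergenerated, the vertical hyperplane $\nu^\perp$ is itself a Carnot subgroup, hence is generated by its first layer and therefore coincides with $H_\nu$; so $E$ is locally a union of left cosets of the entire vertical hyperplane $\nu^\perp$, i.e.\ a half-space bounded by such a hyperplane. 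Conversely, if some vertical hyperplane $\nu^\perp$ is not a subgroup then $H_\nu\subsetneq \nu^\perp$ is a proper closed subgroup, and the $H_\nu$-saturation of a half-line in direction $\nu$ yields an open set with locally constant normal whose boundary is a translate of $H_\nu$, hence not a vertical hyperplane, violating $(2)$.

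For $(1)\Rightarrow (3)$, I would work locally around a point $p$ of a smooth non-characteristic hypersurface $S$. The horizontal normal $\nu_q$ is defined and varies smoothly for $q$ near $p$, and $S$ can be written as an intrinsic graph over the vertical hyperplane $\nu_p^{\perp}$. Under $(1)$ this hyperplane is a Carnot subgroup of $\G$, so the restriction of $\dd_\G$ to $\nu_p^{\perp}$ is bi-Lipschitz to its own CC distance, and nearby points in $\nu_p^{\perp}$ can be connected by horizontal curves of controlled length. Pulling these curves back to $S$ via the graph map, while controlling the error through standard Carnot commutator estimates, yields the required bi-Lipschitz equivalence of the intrinsic distance $\dd_S$ and the restricted distance $\dd_\G|_S$.

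For $(3)\Rightarrow (1)$, I argue by contrapositive: choose a vertical hyperplane $W=\nu^{\perp}$ that is not a Carnot subgroup and take $S=W$, which is smooth and everywhere non-characteristic. Horizontal curves inside $S$ have velocities in the left-invariant distribution $\nu^{\perp}\cap V_1$, whose Lie algebra closure is $\lie(H_\nu)\subsetneq \lie(W)$. Hence the intrinsic CC distance on $S$ is infinite between points lying in distinct left $H_\nu$-cosets of $W$, while $\dd_\G|_S$ remains finite; the local bi-Lipschitz equivalence in $(3)$ fails. I expect the main obstacle to be the forward implication $(1)\Rightarrow (3)$: producing, uniformly as the base point varies along $S$, intrinsic horizontal curves in $S$ with length controlled by the ambient distance between their endpoints. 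This requires coupling the subgroup structure of the whole family of vertical hyperplanes $\nu_q^{\perp}$ guaranteed by $(1)$ with higher-order commutator estimates controlling the intrinsic graph parametrization.
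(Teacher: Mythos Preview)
Your arguments for $(3)\Rightarrow(1)$ and for $(1)\Rightarrow(2)$ match the paper. However, the construction you sketch for $(2)\Rightarrow(1)$ does not work as stated: if $H_\nu\subsetneq\nu^\perp$ then $\dim H_\nu\le\dim\G-2$, so the $H_\nu$-saturation of a half-line in direction $\nu$ has dimension at most $\dim\G-1$ and hence empty interior; it is not a finite-perimeter set witnessing the failure of $(2)$. What one actually needs is a full-dimensional set whose boundary is a smooth hypersurface invariant under $H_\nu$ but genuinely curved in the directions of $[\g,\g]$ lying outside $\Lie(\nu^\perp\cap V_1)$. The paper builds such a hypersurface explicitly (Proposition~\ref{counterexchnnothpcodimalta}) by choosing coordinates adapted to a splitting $\g=\spann\{\nu\}\oplus\Lie(\nu^\perp\cap V_1)\oplus W$ and grafting a non-affine profile such as $|\bar w|^2$ in the $W$-directions.

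For $(1)\Rightarrow(3)$ your route differs substantially from the paper's. You propose to realize $S$ locally as an intrinsic graph over the fixed Carnot subgroup $\nu_p^\perp$ and to transport horizontal curves from the hyperplane back to $S$ via the graph map. The paper instead works infinitesimally: it first shows, by an inductive bracket computation using the hypergenerated hypothesis, that the distribution $\Delta_S=TS\cap\Delta$ is equiregular and bracket-generating on $S$; it then proves that the inclusion $(S,d_i)\hookrightarrow(\G,d_{cc})$ is a contact map whose unique weak tangent at every point is the isometric inclusion of a vertical hyperplane into $\G$, and a general metric lemma (Lemma~\ref{lem:weak_tangent_bilip}) upgrades this pointwise statement to local bi-Lipschitz equivalence. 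The obstacle you flag in your own approach is real: a curve horizontal for the fixed subgroup $\nu_p^\perp$ does not push forward under the graph map to a $\Delta_S$-horizontal curve, since $\Delta_S$ is governed by the varying normal $\nu_q$, and repairing this drift uniformly in the base point requires commutator estimates of every order up to the step. The weak-tangent argument sidesteps this by reducing everything to an algebraic check at a single point.
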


The framework and results of \cref{thm:main-theorem} naturally extends to a hierarchy that we called \emph{hypergenerated of order $k$}, for $k \geq 1$, see \cref{subsec:plentiful}.

We refer to \cref{embedded_surfaces} for the notion of non-characteristic hypersurfaces, intrinsic distance and restricted distance, and to the statement of \Cref{hgiffchnimplhp} for the notion of constant normal. Condition $(2)$ deals with the regularity theory for minimal boundaries. The historical approach to this problem has evolved along different paths, fostering the development of the theories of \emph{currents} by Federer and Fleming (see \cite{MR123260}), of \emph{varifolds} by Almgren (see \cite{MR225243}) and Allard (see \cite{MR307015}), and of \emph{sets of finite perimeter} by De Giorgi (see \cite{MR179651}). However, an important meeting point of these three approaches consists in employing different variations of the so-called \emph{Lipschitz approximation theorem} (see e.g. \cite{MR652826}). In the language of finite perimeter sets, the latter states that minimal boundaries are close, in measure, to graphs of Lipschitz functions with arbitrary small Lipschitz constant. The distinction between the minimal boundary and the Lipschitz graph is quantified by the so-called \emph{excess}, a local quantity that measures the oscillation of the normal.  This modern viewpoint, for which we refer to the monograph \cite{MR2976521}, does not depend on the validity of perimeter monotonicity formulas.
Although the validity of a sub-Riemannian monotonicity formula is still a major open problem, the Lipschitz approximation theorem has been fully generalized in \cites{MR3194680,MR3682744} to the Heisenberg groups $\hh^n$, when $n\geq 2$, via approximation by \emph{intrinsic Lipschitz graphs}. What distinguishes higher-dimensional Heisenberg groups from the first Heisenberg group $\hh^1$, where only partial results are available, is the presence in the latter of sets with zero excess that are not locally flat, i.e., they are not locally vertical hyperplanes. To avoid these types of phenomena, the authors of \cite{psv} identified a family of stratified groups of step $2$, which they called \emph{plentiful groups}, in which sets with locally constant normal are locally flat, thus extending the Lipschitz approximation theorem to this family of groups. The same notion was recently studied from a control theoretic perspective in \cite{master-theses-Llorens}.
The equivalence $(1) \Leftrightarrow (2)$ in \Cref{thm:main-theorem} shows that hypergenerated groups are \emph{exactly} those stratified groups where the sets with locally constant normal are locally vertical hyperplanes. Thus, they represent the right environment for the development of the regularity theory in the setting of stratified groups. 

Motivated by the previous discussion, we develop the theory of hypergenerated groups from an algebraic viewpoint. We show that the property of being hypergenerated only depends on the maximal step $2$ quotient of the group, see \cref{quotient_hypergenerated}. Moreover, we provide a characterization of step $2$ hypergenerated Lie groups in terms of the image of the Kaplan's operator, see \cref{hyper_order}. This result reveal a correspondence between the notion of hypergenerated groups and a class of problems 
known as \emph{MinRank problems}. The latter are deeply studied from the computational complexity point of view, with applications in cryptanalysis (see \cites{complexity_minrank,cripto_minrank}). We briefly discuss this at the end of \cref{algebraic_characterization}.

With this algebraic machinery, we prove that the class of hypergenerated group is actually vast, containing stratified groups of arbitrarily large step. This result extends to hypergenerated groups of order $k$, for every $k \ge 1$, see \cref{existencehypergenerated}. Moreover, we classify all indecomposable hypergenerated groups of topological dimension up to $7$. 

The property of the group of being hypergenerated can be described purely in terms of distance functions. On a subset of a metric space, one can consider two distance functions: either the distance function of the ambient space restricted to the subspace or the induced length metric. On hypersurfaces inside the Euclidean space, these two distance functions are locally bi-Lipschitz equivalent. However, the latter statement in no longer true for Carnot groups. Indeed, \cref{thm:main-theorem} shows that the two distance functions are locally bi-Lipschitz equivalent on all non-characteristic hypersurfaces if and only if the Carnot group is hypergenerated. The bi-Lipschitz equivalence of restricted and intrinsic distance functions for smooth non-characteristic hypersurfaces so far was only known for Heisenberg groups (see \cite{Gioacchino-Enrico-Pauls}).
To prove \cref{thm:main-theorem}, we study weak tangents of contact maps. Following \cite{gioacchino-sebastiano-enrico}, we show that the at every point of an equiregular sub-Riemannian manifold the weak tangent is unique and is a Carnot group. In analogy with \cite{Margulis-Mostow}, we then show that the weak tangent of a contact map between equiregular sub-Riemannian manifolds is unique and is a group homomorphism. We apply this result to the inclusions of non-characteristic surfaces inside of hypergenerated Carnot groups to get the implication $(2)\Rightarrow(3)$ in \cref{thm:main-theorem}.

\vspace{2mm}

The paper is organized as follows. In \cref{sec:preliminaries} we recall some basic properties and terminology of Carnot groups and sub-Riemannian manifolds. In \cref{subsec:plentiful} we introduce and study hypergenerated groups of order $k$. In \cref{sec:locally_const_normal} we prove the implication $(1)\Leftrightarrow(2)$ of \cref{thm:main-theorem}, whereas \cref{sectionnonchar} is dedicated to the proof of the implication $(1)\Leftrightarrow(3)$.





\section{Preliminaries}\label{sec:preliminaries}

\subsection{Equiregular sub-Riemannian manifolds}

For an introduction to sub-Riemannian manifolds, we refer to \cites{Enrico2025book,Mongomery_book}.
A (constant rank) {\em distribution} $\Delta$ on $M$ is a smooth subbundle of the tangent bundle $TM$. In the following, if $M$ is a smooth manifold, we denote by $\Gamma(TM)$ the family of smooth vector fields defined over $M$, and by $\Gamma(\Delta)$ the family of vector fields in $\Gamma(TM)$ that are tangent to $\Delta$. 
For all $p\in M$, we define $\Delta^1_p:=\Delta_p$ and
\begin{equation}
	\Delta^k_p:=\mathrm{span}\{[X_{1}\ldots [X_{j-1},X_{j}]\ldots ](p)\mid j\leq k, \, X_i\in\Gamma(\Delta)\}, \qquad \text{for every }  k>1.
\end{equation}
A distribution $\Delta$ is {\em bracket-generating} if, for all $p\in M$, there exists $k$ such that $\Delta^k_p=T_pM$. We call $\dim(\Delta_p)$ the {\em rank} of $\Delta$ at $p$ and we say that a distribution $\Delta$ has rank $m$ if $m=\dim(\Delta_p)$ for all $p\in M$. We say that a distribution $\Delta$ is {\em equiregular} if the integer $\dim(\Delta^k_p)$ is constant in $p$ for every $k\in\N$.
A {\em sub-Riemannian manifold} is a connected manifold equipped with a bracket-generating distribution $\Delta$, with a Riemannian metric $g$ defined on $\Delta$, and with the \emph{Carnot-Carathéodory distance} 
\begin{equation}
\label{eq:def_dcc}
    \begin{split}
        d_{cc}(x,y):=\inf\Big\{\int_0^1\sqrt{g(\dot{\gamma}(t),\dot{\gamma}(t))}\df t \, \big|\,\gamma:[0,1]\to M \text{ absolutely continuous; }\\\dot{\gamma}(t)\in \Delta_{\gamma(t)}\text{ for a.e. } t\in[0,1];\, \gamma(0)=x;\, \gamma(1)=y \Big\}.
\end{split}
\end{equation}
Since $\Delta$ is bracket-generating, the \emph{Chow-Rashevskii connectivity theorem} (see \cite{MR1880} and \cite[Chapter 3]{Enrico2025book}) ensures that $d_{cc}$ is a finite distance. We say that a sub-Riemannian manifold with distribution $\Delta$ is \emph{equiregular} if $\Delta$ is equiregular.
\begin{definition}[Stratified basis]
\label{def:stratified_basis}
Let $M$ be an equiregular sub-Riemannian manifold with distribution $\Delta$ of rank $m$. Let $p\in M $ and $U\subseteq M$ be a neighborhood of $p$. We say that $X_1,\ldots ,X_n\in \Gamma(TU)$ is a \emph{stratified basis} for the distribution $\Delta$ at $U$ if:
\begin{enumerate}
    \item $X_1,\ldots ,X_{m}$ is an orthonormal frame for $\Delta$;
    \item for all $r\in\{1,\ldots,n-1\}$ and for all $j\in \{r+1,\ldots,n\}$ the vector $X_j$ is an iterated Lie bracket of $X_1,\ldots,X_{r}$;
    \item   if $i<j$, then $w_i\leq w_j$, where
    \begin{equation*}
    w_j:=\min\left\{k \ | \ X_j=[X_{i_1}\ldots [X_{i_{k-1}},X_{i_k}]\ldots ]\}, \ i_1,\ldots ,i_k\in \{1,\ldots ,m\}\right\}.
\end{equation*}
\end{enumerate}
We refer to the integer $w_j$ as the \emph{weight} of $X_j$.

\end{definition}

\subsection{Carnot groups} \label{carnot_groups}

For a thorough introduction on Carnot groups, we refer to \cites{MR2363343, primer,Enrico2025book, MR3587666}.
A \emph{stratified Lie algebra of step $s$}, with $s\in\mathbb{N}$, is a nilpotent Lie algebra $\g$, together with a direct sum decomposition
\begin{equation*}
\mathfrak{g}=V_1\oplus V_2\oplus\cdots\oplus V_s,
\end{equation*}
such that the vector spaces $V_1,\dots,V_s\subseteq\mathfrak g$ satisfy
\begin{equation*}
V_i=[V_1,V_{i-1}]\quad \text{for}\ i=1,\dots,s-1, \quad V_s \neq \set{0}, \quad [V_1,V_s]=\set{0}. 
\end{equation*}
We refer to such decomposition as a \emph{stratification of $\g$}, to $V_1,\dots,V_s$ as the \emph{strata} (or \emph{layers}) of the stratification, and to $m \coloneq \dim(V_1)$ as the \emph{rank} of $\g$. A \emph{stratified group of step $s$ and rank $m$} is a simply connected Lie group $\G$ whose associated Lie algebra $\g$ is stratified of step $s$ and rank $m$. In order to make sense of the notation used in the rest of the paper, we stress that the Lie algebra $\g$ associated to a Lie group $\G$ is understood as its tangent space at the identity element $\id \in \G$. If $p \in G$, we denote by $L_p \colon \G \to \G$ the \emph{left translation} by $p$, i.e., the diffeomorphism $q \mapsto p \cdot q$.
Once a basis $X_1,\dots,X_n$ of a Lie algebra $\g$ associated to a stratified group $\G$ is fixed, we can identify $\G$ with $\R^n$ via \emph{exponential coordinates of the first type}, i.e.,
\begin{equation*}
        \Phi \colon \R^n \to \G, \qquad (y_1,\ldots,y_n)\mapsto \exp(y_1X_1+\ldots+y_n X_n),
    \end{equation*}
where $\exp \colon \g \to \G$ is the exponential map, which is a global diffeomorphism since $\G$ is nilpotent and simply connected (cf. \cite[Theorem 8.4.7]{Enrico2025book}). We say that $H\subseteq \G$ is a \emph{vertical $k$-codimensional space}, for $k \ge 1$, if there exists a $k$-codimensional subspace $P \subseteq V_1$ such that
    \begin{equation}\label{verticalplanesdefeq}
        H=\exp\left(P\oplus [\g,\g]\right).
    \end{equation}
A vertical $1$-codimensional space is also called \emph{vertical hyperplane}.

We recall that the push-forward $\Phi_*\leb^n$ of the Lebesgue measure $\leb^n$ on $\R^n$ is a Haar measure on $\G$ (cf. \cite[Theorem 8.4.7]{Enrico2025book}). Hereafter, the measure $\leb^n$ also denotes an Haar measure on a stratified group $\G$, where the identification via exponential coordinates of the first type remains understood. If $E \subseteq \G$ is a measurable set, we often use the shorthand notation $\lvert E\rvert$ in place of $\leb^n(E)$.
If $\G$ is a stratified group with associated Lie algebra $\g = V_1\oplus\dots\oplus V_s$, we define the \emph{dilation of factor} $\lambda>0$ as the unique Lie group automorphism $\delta_\lambda \colon \G \to \G$ such that
\begin{equation} \label{dilation_lie_algebra}
\left(\mathrm{d}\delta_\lambda\right)_\id v=\lambda^i v, \quad \text{for all $i\in \{1,\ldots ,s\}$ and $v\in V_i$.}
\end{equation}
We refer to the map $\delta \colon \lambda \mapsto \delta_\lambda$ as the \emph{dilation of $\G$}. If $\G$ and $\hh$ are stratified groups, with dilations $\delta^\G$ and $\delta^\hh$ respectively, we say that a map $f \colon \G \to \hh$ is \emph{homogeneous} if
\begin{equation*}
    f(\delta_\lambda^\G(p))=\delta_\lambda^\hh(f(p)) \quad \text{for all $p\in\G$ and $\lambda >0$.}
\end{equation*} 
Let $\g=V_1 \oplus \dots \oplus V_s$ be a Lie algebra associated with a stratified group $\G$. An inner product $\langle \cdot,\cdot \rangle$ defined on $V_1$ extends by left-translations to a Riemannian metric $g$ defined on the distribution
\begin{equation} \label{def_distribution_carnot}
    \Delta \coloneq \bigcup_{p \in \G} \mathrm{d}L_p(V_1).
\end{equation}
More explicitly, if $v,w \in \Delta_p$, then $g(v,w)\coloneq\langle \mathrm{d}L_p^{-1}(v),\mathrm{d}L_p^{-1}(w)\rangle$. Being $\Delta$ bracket-generating and equiregular, the inner product $\langle \cdot,\cdot \rangle$ endows $\G$ with the structure of equiregular sub-Riemannian manifold. We refer to this distinguished class of sub-Riemannian manifolds as \emph{Carnot groups}.

We stress that, in a Carnot group $\G$ with dilation $\delta$, the Carnot-Carathéodory distance $d_\mathrm{cc}$, defined by \eqref{eq:def_dcc}, satisfies the following:
\begin{itemize}[leftmargin=20pt]
    \item $d_\mathrm{cc}(L_p(q_1),L_p(q_2))=d_\mathrm{cc}(q_1,q_2)$ for all $p,q_1,q_2 \in \G$, \hfill \emph{(left-invariant)}
    \item $d_\mathrm{cc}(\delta_\lambda(q_1),\delta_\lambda(q_2))=\lambda d_\mathrm{cc}(q_1,q_2)$  for all $q_1,q_2 \in \G$ and $\lambda \in \R$. \hfill \emph{(one-homogeneous)}
\end{itemize}
Finally, we denote the open balls centered at $p \in G$ of radius $r>0$ by $B_r(p) \coloneq \set{q\in\G : d_\mathrm{cc}(q,p)<r}$.

\subsection{Embedded surfaces in Carnot groups} \label{embedded_surfaces}

In the following, we consider a \emph{surface} to be a $C^1$-manifold $S \subseteq M$ embedded in a smooth manifold $M$, and a \emph{hypersurface} to be a surface of codimension $1$.
    Let $\G$ be a Carnot group with distribution $\Delta$ defined as in \eqref{def_distribution_carnot}. If $k< \dim(V_1)$, an embedded $k$-codimensional surface $S\subseteq \G$ is \emph{non-characteristic} if $$T_pS + \Delta_p= T_p\G\quad \text{for all $p\in S$.} $$
   In this case, the \emph{horizontal tangent distribution} $\otd$, defined by setting
    \begin{equation*}
        \otdp \coloneq \Delta_p\cap T_p S \qquad \text{for every $p\in S$},
    \end{equation*}
is a distribution of constant rank $\dim(V_1)-k$. We consider on an embedded surface $S \subseteq \G$ the following distance functions.
\begin{definition}
\label{def:distances}
     Let $\G$ be a Carnot group, with Carnot-Carathéodory distance $d_\mathrm{cc}$. Consider an embedded surface $S\subseteq \G$.
     \begin{itemize}[leftmargin=20pt]
         \item The \emph{restricted distance} $d_r:S\times S\to [0,\infty)$ is the restriction of $d_\mathrm{cc} \colon G \times G \to [0,\infty)$ to $S\times S$.
         \item  The \emph{intrinsic distance} $d_i:S\times S\to [0,\infty]$ is defined, for every $p,q\in S$, by setting
     \begin{equation*}
              d_i(p,q):=\inf_\gamma \Big\{\int_0^1\sqrt{g(\dot{\gamma}(t),\dot{\gamma}(t))} \ \df t\Big\},
    \end{equation*}
    where the infimum is taken among all absolutely continuous curves $\gamma \colon [0,1]\to S$, with $\dot\gamma(t)\in\otdgt$ for a.e. $t\in[0,1]$, such that $\gamma(0)=p$ and $\gamma(1)=q$.
     \end{itemize}
\end{definition}

As discussed in \Cref{sectionnonchar}, whilst $d_r$ is always a finite distance on $S$, the intrinsic distance $d_i$, although being positive, symmetric and satisfying the triangle inequality, may not be finite (see \Cref{prop:non-hyper-gen-hyperplane}).

\section{Algebraic structure of hypergenerated groups}
\label{subsec:plentiful}

In this section, we formally introduce the central concept of this work: hypergenerated groups. We then explore their key algebraic properties, provide several examples, and present an effective method to determine whether a given stratified Lie group is hypergenerated.


\begin{definition}[Hypergenerated algebras and groups]
\label{def:plentiful}
A stratified Lie algebra $\g=V_1\oplus\cdots\oplus V_s$ of rank $m$ is \emph{hypergenerated of order $k$} if every linear subspace $P \subseteq V_1$, with $\dim(P) = m-k$, satisfies
\begin{equation*}
    [\mathfrak g,\mathfrak g]\subseteq \Lie (P),
\end{equation*}
where $\Lie(E)$ denotes the smallest Lie sub-algebra of $\g$ containing the subset $E \subseteq \g$. When $k=1$, we simply say that $\g$ is \emph{hypergenerated}. An \emph{hypergenerated group of order $k$} is a stratified group whose Lie algebra is hypergenerated of order $k$.
\end{definition}

By definition, every stratified Lie algebra is hypergenerated of order $0$. 

\begin{remark}
    We stress that the property of being hypergenerated of order $k$ is invariant under isomorphisms of Lie algebras. In particular, it does not depend on the stratification. Indeed, assume that $\mathfrak g_1=V_1\oplus V_2\oplus\cdots\oplus V_s$ and $\mathfrak g_2=W_1\oplus W_2\oplus\cdots\oplus W_s$ are two isomorphic stratified Lie algebras of rank $m$ and step $s$, and that $\g_1$ is hypergenerated of order $k$. Then (see \cite[Proposition 2.17]{primer}), there exists a Lie algebra isomorphism $\varphi \colon \g_1 \to \g_2$ such that $\varphi(V_1)=W_1$. Fix $P \subseteq W_1$, with $\dim(P) =m-k$, then $P'\coloneq\varphi^{-1}(W)$ is a linear subspace of $V_1$ such that $\dim(P')=m-k$. Therefore
    \begin{equation*}
       [\g_2,\g_2]=[\varphi(\g_1),\varphi(\g_1)]=\varphi\left([V_1,V_1]\right) \subseteq \Lie(\varphi(P'))= \varphi(\Lie(P')) =\Lie(P),
    \end{equation*}
    which proves that $\g_2$ is hypergenerated of order $k$ as well.
\end{remark}


\subsection{A characterization of hypergenerated Lie algebras} \label{algebraic_characterization}

\begin{proposition} \label{hyper_step2}
    Let $\g=V_1 \oplus \cdots \oplus V_s$ be a stratified Lie algebra of rank $m$, and $k \ge 0$. The following are equivalent 
    \begin{enumerate}
        \item[(i)] $\g$ is hypergenerated of order $k$;
        \item[(ii)] every subspace $P \subseteq V_1$, with $\dim(P) = m-k$, satisfies $V_2 \subseteq \Lie(P)$. 
    \end{enumerate} 
\end{proposition}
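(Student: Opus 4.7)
The forward direction (i) $\Rightarrow$ (ii) is immediate: since $V_2 \subseteq [\g,\g]$, the inclusion $[\g,\g] \subseteq \Lie(P)$ trivially forces $V_2 \subseteq \Lie(P)$.

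For the converse, my plan is to establish the stronger pointwise implication: for every fixed $(m-k)$-dimensional subspace $P \subseteq V_1$ satisfying $V_2 \subseteq \Lie(P)$, one has $V_j \subseteq \Lie(P)$ for all $j \geq 2$. Since $[\g,\g] = V_2 \oplus \cdots \oplus V_s$ by stratification, this yields (i). I would proceed by induction on $j$, with base case $j=2$ provided by the hypothesis. For the inductive step, fix a complement $V_1 = P \oplus Q$; writing $x = p + q \in V_1$ with $p \in P$, $q \in Q$, we have $[p, v] \in [P, \Lie(P)] \subseteq \Lie(P)$ for any $v \in V_j$, so the task reduces to showing $[q, v] \in \Lie(P)$. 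By the outer inductive hypothesis, $V_j = (\Lie(P))_j$ is spanned by length-$j$ iterated brackets of elements of $P$, so it suffices to handle such iterated brackets.

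The crux is the following auxiliary lemma, depending only on $V_2 \subseteq \Lie(P)$: for every $q \in V_1$ and every iterated bracket $b$ of length $\ell \geq 2$ whose entries all lie in $P$, one has $[q, b] \in \Lie(P)$. I would establish this by an inner induction on $\ell$. The base case $\ell = 2$ is handled by Jacobi:
\begin{equation*}
    [q, [p_1, p_2]] = [[q, p_1], p_2] + [p_1, [q, p_2]],
\end{equation*}
and both summands lie in $\Lie(P)$ because $[q, p_i] \in V_2 \subseteq \Lie(P)$ and $p_i \in P \subseteq \Lie(P)$. For $\ell \geq 3$, decompose $b = [b_1, b_2]$ with $b_1, b_2$ iterated brackets of $P$ of strictly smaller lengths; Jacobi yields $[q, b] = [[q, b_1], b_2] + [b_1, [q, b_2]]$, and each $b_i$ lies in $\Lie(P)$ by construction while each $[q, b_i]$ lies in $\Lie(P)$ by the inner inductive hypothesis (treating $[q, p] \in V_2 \subseteq \Lie(P)$ as the trivial case when $b_i$ has length one), hence $[q, b] \in [\Lie(P), \Lie(P)] \subseteq \Lie(P)$.

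The main obstacle is purely organizational: one has to thread the two nested inductions so that every application of Jacobi strictly decreases the length of the iterated bracket on which the external element $q$ is acting, eventually reducing to the hypothesis $V_2 \subseteq \Lie(P)$. Beyond that, the argument relies only on the graded structure of $\g$ and the closure of $\Lie(P)$ under brackets; notice also that the hypothesis $V_2 \subseteq \Lie(P')$ is used for the single subspace $P' = P$ under consideration, rather than for all $(m-k)$-dimensional subspaces simultaneously.
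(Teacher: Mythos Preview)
Your proof is correct and follows essentially the same approach as the paper: both fix $P$, argue by induction on the layer index that $V_h \subseteq \Lie(P)$, and use the Jacobi identity together with the hypothesis $V_2 \subseteq \Lie(P)$ to absorb brackets with elements outside $P$. The paper's organization is slightly more economical---it writes $V_h = [P, V_{h-1}\cap\Lie(P)]$ so that a single Jacobi move suffices per inductive step, whereas you isolate an auxiliary inner induction on bracket length---but the two arguments are the same in substance, and your closing observation that the hypothesis is only used for the single subspace $P$ under consideration applies equally to the paper's proof.
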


\begin{proof}
    The implication $(i) \Rightarrow (ii)$ is trivial. Assume now that the Lie algebra $\g$ satisfies $(ii)$, and fix $P \subseteq V_1$, with $\dim(P) = m-k$. Since $[\g,\g]=V_2 \oplus \cdots \oplus V_s$, it is sufficient to prove that $V_h \subseteq \Lie(P)$ for every $2 \le h \le s$. We proceed by induction on $h$. The base case $h=2$ is true by assumption. Fix $2 \le h\leq s-1$, and
 assume that $V_n \subseteq \Lie(V)$ for every $2\leq n \le h$. Consider a basis $X_1,\dots,X_{m-k}$ of $P$ and extend it to a basis $X_1,\dots,X_m$ of $V_1$. Since $V_h \subseteq \Lie(P)$ and $h \ge 2$, then 
 $$V_h = \spann \set{ \, [X_i,Y] \, \colon \,   1 \le i \le m-k, \, Y \in V_{h-1} \cap \Lie(P) \,}. $$
 Therefore, since $\g$ is stratified, then
 $$V_{h+1} = \spann \set{ \, [X_j,[X_i,Y]] \, \colon \, 1 \le j \le m, \, 1 \le i \le m-k, \, Y \in V_{h-1} \cap \Lie(P) \,}. $$ 
 We now fix $1 \le j \le m$, $1 \le i \le m-k$, and $Y \in V_{h-1} \cap \Lie(P)$. By the Jacobi identity we get
    \begin{equation}\label{jacobiincharluca}
       [X_j,[X_i,Y]]=[X_i,[X_j,Y]]+[[X_j,X_i],Y]].
    \end{equation}
We recall that $Y\in \Lie(P)$. We observe that $X_i \in P$ and that $[X_j, X_i] \in V_2$, whence $[X_j,X_i] \in \Lie(P)$ by hypothesis. Moreover, $[X_j,Y] \in V_h$ and therefore $[X_j,Y] \in \Lie(P)$ by the inductive hypothesis. We conclude that the right hand side of \eqref{jacobiincharluca} belongs to $\Lie(P)$, and so the left hand side does. Since elements of that form span $V_{h+1}$, then $V_{h+1} \subseteq \Lie(P)$ and the proof follows by induction.
\end{proof}

We recall that the \emph{lower central series} $(\mathfrak g^h)_{h\in \mathbb{N}}$ is the collection of ideals of $\g$ defined inductively by
\begin{equation*}
   \g^1\coloneq\g\quad\text{and}\quad\g^{h+1}\coloneq[\g,\g^h], \quad \text{for any $h\geq 1$.}
\end{equation*}

The following is a consequence of \cref{hyper_step2}, together with the fact that, if $\g=V_1\oplus \dots \oplus V_s$ is a stratified Lie algebra, then $\g/\g^3 = V_1 \oplus V_2$ is a stratified Lie algebra of step $2$.

\begin{corollary} \label{quotient_hypergenerated}
    A stratified Lie algebra $\g$ is hypergenerated of order $k$ if and only if the Lie algebra $\g/\g^3$ is hypergenerated of order $k$.
\end{corollary}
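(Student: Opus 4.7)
The plan is to combine \cref{hyper_step2} with the observation that quotienting by $\g^3$ leaves intact the structure on which the hypergenerated condition depends: the strata $V_1$, $V_2$, and the restriction of the Lie bracket to $V_1 \times V_1 \to V_2$.

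First I would record the identifications between $\g$ and $\g/\g^3$. Since $\g^3 = [\g,[\g,\g]] \subseteq V_3\oplus\cdots\oplus V_s$, the canonical projection $\pi\colon \g \to \g/\g^3$ restricts to linear isomorphisms on $V_1$ and on $V_2$, and these isomorphisms identify $V_1$ and $V_2$ with the two strata of the natural stratification of $\g/\g^3$ (which is step $\min\{2,s\}$ and rank $m$). In particular, $\pi$ induces a dimension-preserving bijection between the $(m-k)$-dimensional subspaces $P\subseteq V_1$ and the $(m-k)$-dimensional subspaces $\bar P \coloneq \pi(P)$ of the first stratum of $\g/\g^3$, and it satisfies $[\pi(X),\pi(Y)] = \pi([X,Y])$ for all $X,Y\in V_1$.

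The central step is then the equivalence $V_2 \subseteq \Lie(P)$ in $\g$ iff $V_2 \subseteq \Lie(\bar P)$ in $\g/\g^3$. Because $\g$ is stratified, iterated brackets of elements of $P\subseteq V_1$ of length $h$ land in $V_h$; hence the grading restricted to $\Lie(P)$ gives $\Lie(P)\cap V_2 = [P,P]$. Similarly, in the step-$2$ algebra $\g/\g^3$, one has $\Lie(\bar P) = \bar P \oplus [\bar P,\bar P]$, so $\Lie(\bar P)\cap V_2 = [\bar P,\bar P] = \pi([P,P])$. Since $\pi|_{V_2}$ is injective, the condition $V_2\subseteq [P,P]$ in $\g$ is equivalent to the condition $V_2\subseteq [\bar P,\bar P]$ in $\g/\g^3$. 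Applying \cref{hyper_step2} to both $\g$ and $\g/\g^3$ then yields the corollary. No step appears delicate; the only point to verify carefully is that $\pi$ behaves correctly on the strata $V_1$ and $V_2$.
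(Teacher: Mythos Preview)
Your proof is correct and follows essentially the same approach as the paper. The paper's own argument is a one-line observation that the corollary follows from \cref{hyper_step2} together with the identification $\g/\g^3 = V_1 \oplus V_2$; your write-up simply makes explicit the identifications (via the projection $\pi$) and the equality $\Lie(P)\cap V_2 = [P,P]$ that this one-liner relies on.
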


We have now reduced the problem of characterizing hypergenerated Lie algebras to the case of step $2$ stratified Lie algebras. In this regard, it is convenient to introduce the \emph{Kaplan's operator}.
If $V$ is a vector space, we denote by $\sk(V)$ the set of skew-symmetric bi-linear forms on the vector space $V$. Given a stratified Lie algebra $\g = V_1 \oplus V_2 \oplus \dots \oplus V_s$ of step at least $2$, we define the {\em Kaplan's operator} $\J$ (cf. \cite{Kaplan}) to be the linear map $\J \colon V_2^* \to \sk(V_1)$ such that
\begin{equation*} 
	\mu \mapsto \J_\mu, \quad \J_\mu(v,w) \coloneq \mu([v,w]), \quad \text{for every $\mu \in V_2^*$.}
\end{equation*}
Note that, since $V_1$ Lie-generates $\g$, the map $\J$ is injective.
We recall some basic terminology about skew-symmetric bi-linear forms (cf. \cite[Section 6.2]{MR780184}). If $\omega \in \sk(V)$, the {\em rank} of $\omega$ is defined by setting $\rank(\omega) \coloneq \dim(V) - \dim(\ker(\omega))$, 
where 
\begin{equation*}
    \ker (\omega)=\{v\in V\,:\,\omega(v,w)=0\text{ for every }w\in V\}.
\end{equation*}
The rank of a skew-symmetric bi-linear form is always an even number. If $\ker(\omega)$ is trivial, we say that $\omega$ is {\em non-degenerate}. A linear subspace $P \subseteq V$ is said to be {$\omega$-\em isotropic} if $\omega(v,w)=0$ for every $v,w \in P$. 
\begin{remark}
\label{rem:existence_of_isotropic_subspace}
    There exists a $\omega$-isotropic subspace $P \subseteq V$ of codimension $k$ if and only if $\rank(\omega)\le2k$.
\end{remark}

\begin{remark} \label{generate_isotropic}
    We claim that a subspace $P \subseteq V_1$ Lie-generates $V_2$ if and only if, for every $\mu \in V_2^* \setminus \set{0}$, $P$ is not $\J_\mu$-isotropic. Indeed, from the definition of $\J_\mu$, the subspace $P$ is $\J_\mu$-isotropic if and only if $\mu \in [P,P]^\perp \coloneq \set{\eta \in V_2^* : \eta([P,P])=0}$. Moreover, $[P,P]^\perp=\set{0}$ if and only $[P,P]=V_2$, i.e., if and only if $P$ Lie-generates $V_2$.
    \end{remark}

Given a stratified Lie algebra of step $2$, we want to rewrite the property of being hypergenerated in terms of the rank of the bi-linear forms appearing in the image of its Kaplan's operator. Those $2$-step stratified Lie algebras for which $\J_\mu$ is non-degenerate for every $\mu \in V_2^* \setminus \{0\}$ have been widely considered in the literature. They were firstly introduced by Guy Métivier in \cite{metivier} to study the hypo-ellipticity properties of homogeneous operators in stratified groups. We propose a generalized version of his definition.

\begin{definition}
	Let $\g$ be a stratified Lie algebra of step $2$. We define the {\em Métivier order} of $\g$, denoted with $\order(\g)$, to be the number
	\begin{equation} \label{m_order}
		\order(\g) \coloneq \min \set{\rank(\J_\mu) : \mu \in V_2^*\setminus\set{0} }.
	\end{equation}
A stratified Lie algebra $\g$ of step $2$ is {\em Métivier} if $\rank(\g)=\order(\g)$.
\end{definition}

We stress that the Métivier order is always an even number between $2$ and $\rank(\g)$. Our interest in the notion of Métivier order is motivated by the following observation.

\begin{proposition} \label{hyper_order}
    Let $\g$ be a stratified Lie algebra of step $2$, and $k \ge 0$. Then $\g$ is hypergenerated of order $k$ if and only if $2k < \order(\g)$. 
\end{proposition}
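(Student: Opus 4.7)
The plan is to reduce the statement to a straightforward chain of equivalences built from \Cref{hyper_step2} together with \Cref{rem:existence_of_isotropic_subspace} and \Cref{generate_isotropic}. Since $\g$ has step $2$, one has $[\g,\g]=V_2$, so by \Cref{hyper_step2} the algebra $\g$ is hypergenerated of order $k$ if and only if every subspace $P\subseteq V_1$ of codimension $k$ satisfies $V_2\subseteq\Lie(P)$; as $\g$ is step $2$, the latter inclusion is equivalent to $[P,P]=V_2$, i.e., to $P$ Lie-generating $V_2$.

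Next, I would invoke \Cref{generate_isotropic} to translate the condition $[P,P]=V_2$ into a statement on the Kaplan operator: a subspace $P\subseteq V_1$ Lie-generates $V_2$ if and only if, for every $\mu\in V_2^*\setminus\{0\}$, $P$ fails to be $\J_\mu$-isotropic. Combining the two reformulations, $\g$ is hypergenerated of order $k$ if and only if there exist no pair $(P,\mu)$ with $\mu\in V_2^*\setminus\{0\}$ and $P\subseteq V_1$ a codimension-$k$ subspace that is $\J_\mu$-isotropic; equivalently, for every $\mu\in V_2^*\setminus\{0\}$ there is no $\J_\mu$-isotropic subspace of $V_1$ of codimension $k$.

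Finally, I would apply \Cref{rem:existence_of_isotropic_subspace} (taking the contrapositive) to the skew-symmetric form $\J_\mu$: the non-existence of a codimension-$k$ $\J_\mu$-isotropic subspace is equivalent to $\rank(\J_\mu)>2k$. Asking this for every $\mu\in V_2^*\setminus\{0\}$ is, by the very definition \eqref{m_order} of the Métivier order, the same as $\order(\g)>2k$. Chaining the equivalences yields the claim.

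I do not expect a real obstacle here: the whole proof is an assembly of already-recorded facts. The only small point to keep in mind is the step-$2$ identification $\Lie(P)=P+[P,P]$, which makes the passage from "hypergenerated of order $k$" to the purely bilinear condition on $\J_\mu$ work without any recursive argument on layers.
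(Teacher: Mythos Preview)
Your proposal is correct and follows essentially the same route as the paper: both arguments combine \Cref{generate_isotropic} with \Cref{rem:existence_of_isotropic_subspace} to pass between ``$P$ Lie-generates $V_2$'' and ``$\rank(\J_\mu)>2k$ for all $\mu\neq 0$''. The only cosmetic difference is that you package the argument as a single chain of equivalences (and invoke \Cref{hyper_step2}, which in step $2$ is immediate from the definition), whereas the paper proves the two implications separately.
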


\begin{proof}
    Assume that $2k < \order(\g)$ and fix $P \subseteq V_1$ of co-dimension $k$. For every $\mu \in V^*_2 \setminus \set{0}$, since $\rank(\J_\mu) \ge \order(\g) > 2k$, by \cref{rem:existence_of_isotropic_subspace} the subspace $P$ is not isotropic for $\J_\mu$. Therefore, from \cref{generate_isotropic}, the subspace $P$ Lie-generates $V_2$. Thus $\g$ is hypergenerated of order $k$.

    Conversely, assume that $\order(\g) \le 2k$ and fix $\mu \in V_2^*\setminus\set{0}$ such that $\rank(\J_\mu) \le 2k$. Then, by \cref{rem:existence_of_isotropic_subspace}, there exists a subspace $P \subseteq V_1$, of co-dimension $k$, isotropic for $\J_\mu$. By \cref{generate_isotropic}, the subspace $P$ does not Lie-generate $V_2$, therefore $\g$ is not hypergenerated of order $k$.
\end{proof}

\begin{remark}\label{rankprop}
    We claim that, if $\g$ is hypergenerated of order $k$ and step $s\ge2$, then $\rank(\g) \ge 2(k+1)$. Indeed, in view of \cref{hyper_step2}, we can assume that $\g$ is stratified of step $2$. Since $\order(\g)$ is an even number smaller than $\rank(\g)$, then \cref{hyper_order} implies that $2(k+1) \le \order(\g) \le \rank(\g)$.
\end{remark} 

From \cref{hyper_order}, we get the following consequence.
\begin{corollary} \label{res:H-type}
   If $\g$ is a Métivier Lie algebra of rank $2m$, then $\g$ is hypergenerated of order $m-1$.
\end{corollary}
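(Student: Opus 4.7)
The plan is to derive this as a direct consequence of \cref{hyper_order}, which characterizes the hypergenerated-of-order-$k$ property in step-$2$ stratified Lie algebras through the Métivier order. By hypothesis, $\g$ is a Métivier Lie algebra, so by definition $\order(\g) = \rank(\g) = 2m$.

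Applying \cref{hyper_order} with $k = m-1$, the algebra $\g$ is hypergenerated of order $m-1$ if and only if
\begin{equation*}
    2(m-1) < \order(\g) = 2m,
\end{equation*}
which clearly holds. This concludes the proof.

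The only potential subtlety is making sure that we are indeed in the hypotheses of \cref{hyper_order}, namely that $\g$ is stratified of step $2$; this is automatic since the notion of Métivier Lie algebra is defined only for step-$2$ stratified algebras. No further computation is required.
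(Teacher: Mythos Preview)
Your proof is correct and follows exactly the same approach as the paper, which derives the corollary directly from \cref{hyper_order} by noting that a M\'etivier Lie algebra of rank $2m$ has $\order(\g)=2m>2(m-1)$. The paper does not spell out the inequality explicitly, but your added detail is precisely the intended reasoning.
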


\begin{proposition}\label{firstproperties}
    Let $\g_1$ and $\g_2$ be stratified Lie algebras, and let $\mathfrak{h}$ be an homogeneous ideal of $\g_1=V_1 \oplus \dots \oplus V_s$ (i.e. $[\mathfrak{h},\g_1]\subseteq\mathfrak{h}$ and $\mathfrak{h}=(V_1 \cap \mathfrak{h}) \oplus \dots \oplus (V_s \cap \mathfrak{h})$). The following hold:
    \begin{enumerate}[label = (\roman*)]
        \item The Lie algebra $\g_1\times\g_2$ is hypergenerated of order $k$ if and only if both $\g_1$ and $\g_2$ are hypergenerated of order $k$.
        \item If $\g_1$ is hypergenerated of order $k$, then $\g_1/\mathfrak h$ is hypergenerated of order $k$.
    \end{enumerate}
    \end{proposition}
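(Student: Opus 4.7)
The plan is to prove the forward direction of (i) directly, the backward direction of (i) via the Kaplan operator after reducing to step $2$, and (ii) by lifting subspaces through the quotient map. Throughout, I write $V_1^{(i)}$ and $V_2^{(i)}$ for the first two layers of $\g_i$, and $\J^{(i)}$ for the Kaplan operator of $\g_i$.

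For the forward direction of (i), assume $\g_1 \times \g_2$ is hypergenerated of order $k$. Given a codimension-$k$ subspace $P_1 \subseteq V_1^{(1)}$, set $P := P_1 \oplus V_1^{(2)}$, which has codimension $k$ in $V_1^{(1)} \oplus V_1^{(2)}$. Because brackets between elements of $\g_1$ and $\g_2$ in the product vanish, a direct computation gives $\Lie_{\g_1 \times \g_2}(P) = \Lie_{\g_1}(P_1) \oplus \g_2$. The hypothesis $[\g_1,\g_1] \oplus [\g_2,\g_2] \subseteq \Lie_{\g_1 \times \g_2}(P)$, projected onto the first factor, yields $[\g_1,\g_1] \subseteq \Lie_{\g_1}(P_1)$, so $\g_1$ is hypergenerated of order $k$; the argument for $\g_2$ is symmetric.

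For the backward direction of (i), I first reduce to step $2$. Since $(\g_1 \times \g_2)^3 = \g_1^3 \times \g_2^3$, the quotient $(\g_1 \times \g_2)/(\g_1 \times \g_2)^3$ is isomorphic to $\g_1/\g_1^3 \times \g_2/\g_2^3$, so by \cref{quotient_hypergenerated} I may assume both $\g_i$ are stratified of step at most $2$ (the abelian case being vacuous). Then I invoke \cref{hyper_order}: hypergeneration of order $k$ is equivalent to $2k < \order(\g)$. Since brackets in the product are componentwise, for $\mu = (\mu_1,\mu_2) \in (V_2^{(1)})^* \oplus (V_2^{(2)})^*$ the form $\J_\mu$ on $V_1^{(1)} \oplus V_1^{(2)}$ is block-diagonal with blocks $\J_{\mu_1}^{(1)}$ and $\J_{\mu_2}^{(2)}$, hence $\rank(\J_\mu) = \rank(\J_{\mu_1}^{(1)}) + \rank(\J_{\mu_2}^{(2)})$. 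Taking $\mu$ with only one nonzero component shows $\order(\g_1 \times \g_2) \le \min(\order(\g_1),\order(\g_2))$, while any $\mu \neq 0$ has $\rank(\J_\mu) \ge \min(\order(\g_1),\order(\g_2))$, so equality holds. The condition $2k < \order(\g_1 \times \g_2)$ is therefore equivalent to $2k < \order(\g_i)$ for both $i$, finishing (i).

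For (ii), let $\pi \colon \g_1 \to \g_1/\mathfrak h$ denote the quotient homomorphism. Homogeneity of $\mathfrak h$ ensures that $\pi|_{V_1}$ is a surjection onto the first layer $\bar V_1 := V_1/(V_1 \cap \mathfrak h)$ of $\g_1/\mathfrak h$. Given a codimension-$k$ subspace $\bar P \subseteq \bar V_1$, the preimage $P := (\pi|_{V_1})^{-1}(\bar P) \subseteq V_1$ has codimension $k$ in $V_1$ by the first isomorphism theorem. Hypergeneration of $\g_1$ gives $[\g_1,\g_1] \subseteq \Lie_{\g_1}(P)$. Since $\pi$ is a Lie algebra morphism, $\pi(\Lie_{\g_1}(P)) \subseteq \Lie_{\g_1/\mathfrak h}(\pi(P)) = \Lie_{\g_1/\mathfrak h}(\bar P)$, and combined with $[\g_1/\mathfrak h,\g_1/\mathfrak h] = \pi([\g_1,\g_1])$ this gives the required inclusion. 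The main structural step in the whole proof is the Kaplan block-diagonalization used in the backward direction of (i); the remaining verifications are purely formal.
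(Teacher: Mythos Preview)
Your proof is correct, and the core move — reduce to step $\le 2$ and compute $\order(\g_1\times\g_2)=\min(\order(\g_1),\order(\g_2))$ via block-diagonalization of the Kaplan form — is exactly what the paper does for (i). You differ in two minor ways. First, for the forward direction of (i) you give a direct elementary argument (take $P=P_1\oplus V_1^{(2)}$ and project), whereas the paper extracts both directions at once from the $\order$ identity together with \cref{hyper_order}; your version avoids invoking the Kaplan machinery where it is not needed. Second, for (ii) you lift $\bar P$ through the quotient $\pi$ without first reducing to step $2$, whereas the paper reduces and then checks $V_2\subseteq[P,P]$ directly; your argument is slightly more general in form but amounts to the same computation. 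One small wrinkle: your parenthetical ``the abelian case being vacuous'' only disposes of the case where both factors are abelian. When exactly one factor is abelian the quantity $\order(\g_i)$ is undefined for that factor, so the formula $\order(\g_1\times\g_2)=\min(\order(\g_1),\order(\g_2))$ is not literally available; your block computation still works (the abelian block contributes rank $0$, yielding $\order(\g_1\times\g_2)=\order(\g_2)$), but the paper separates this case explicitly and it would be cleaner for you to do so as well.
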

    \begin{proof} In view of \cref{hyper_step2}, it is not restrictive to assume that $\g_1$ and $\g_2$ are stratified Lie algebras of step at most $2$.
    
    $(i)$ If both $\g_1$ and $\g_2$ are Abelian, the result is straightforward. If $\g_1$ and $\g_2$ are both non-abelian, it follows from \eqref{m_order} that $\order(\g_1 \times \g_2)=\min\set{\order(\g_1),\order(\g_2)}$. Finally, if only $\g_1$ is abelian, then $\order(\g_1\times\g_2)=\order(\g_2)$. In both the latter cases, the result follows from \cref{hyper_order}.
        
    $(ii)$ The stratification of $\g_1/\mathfrak h$ is given by $\g_1/\mathfrak{h}=(V_1+\mathfrak h)/\mathfrak h\oplus (V_2+\mathfrak h)/\mathfrak h$, where $\g_1 =V_1 \oplus V_2$ is the  stratification of $\g_1$. Let $(P + \mathfrak{h})/\mathfrak h$, with $P \subseteq V_1$, be a subspace of $(V_1+\mathfrak h)/\mathfrak{h}$ of codimension $k$. Then, the subspace $P$ has codimension $k$ in $V_1$ and     
    \begin{equation*}
        (V_2+\mathfrak h)/\mathfrak h=([P,P]+\mathfrak h)/\mathfrak h = [(P+\mathfrak h)/\mathfrak h,(P+\mathfrak h)/\mathfrak h ],
    \end{equation*}
    where we used that $\g_1$ is hypergenerated of order $k$.     \end{proof}

We stress that the order of a stratified Lie algebra of step $2$ is precisely the quantity $\widetilde{k}$ appearing in the statement of \cite[Theorem 1.1]{sard_step2} about the abnormal set. By combining the result with \cref{hyper_order}, we get the following:

\begin{corollary}
    Let $\G$ be a stratified group of step $2$, with associated Lie algebra $\g=V_1 \oplus V_2$. Assume that $\G$ is hypergenerated of order $k$, then the abnormal set
    $$\mathrm{Abn}(\G) \coloneq \bigcup \, \set{\exp(\Lie(P)) : P \subseteq V_1, \, [P,V_1] \neq V_2},$$
    is contained in an algebraic variety of codimension at least $2k+3$.
\end{corollary}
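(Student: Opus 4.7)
The statement is essentially a direct combination of two ingredients already in play: the parity argument built into \cref{hyper_order} together with the cited result \cite[Theorem 1.1]{sard_step2}. So the plan is short and purely a matter of chaining quantitative bounds.

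First, I would use \cref{hyper_order}: since $\G$ is hypergenerated of order $k$, its Lie algebra satisfies $\order(\g) > 2k$. Crucially, $\order(\g)$ is defined as the minimum rank of a nonzero skew-symmetric bi-linear form $\J_\mu$, and the rank of any skew-symmetric bi-linear form is even. Therefore the strict inequality $\order(\g)>2k$ upgrades to
\begin{equation*}
\order(\g) \ge 2k+2.
\end{equation*}

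Next, I would invoke the identification, noted in the paragraph preceding the corollary, between $\order(\g)$ and the quantity $\widetilde k$ appearing in \cite[Theorem 1.1]{sard_step2}. That theorem states that in a step-$2$ stratified group, the abnormal set $\mathrm{Abn}(\G)$ is contained in an algebraic variety whose codimension is bounded below by $\widetilde k +1$. Substituting the estimate from the previous step gives
\begin{equation*}
\codim(\mathrm{Abn}(\G)) \;\ge\; \order(\g) + 1 \;\ge\; 2k+3,
\end{equation*}
which is the claimed bound.

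There is really no obstacle to speak of: the only subtlety is making sure the definition of the abnormal set given in the statement matches the one used in \cite{sard_step2}. For step-$2$ groups, the condition $[P,V_1] \ne V_2$ is exactly the failure of $P$ to Lie-generate $V_2$, so $\exp(\Lie(P)) = \exp(P + [P,P])$ parametrizes precisely the abnormal leaves considered in that reference, and no additional translation is required.
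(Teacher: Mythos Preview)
Your proof is correct and follows exactly the route the paper indicates: it simply chains \cref{hyper_order} (upgraded via parity to $\order(\g)\ge 2k+2$) with the identification $\widetilde{k}=\order(\g)$ and the codimension bound from \cite[Theorem~1.1]{sard_step2}. The paper's proof is nothing more than the one-line remark preceding the corollary, and you have just spelled out the arithmetic.
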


\cref{hyper_step2} and \cref{hyper_order} convert the problem of determining whether a stratified Lie algebra of rank $m$ is hypergenerated of a certain order $k$ to the problem of determining if a given linear subspace $S \subseteq \sk(\R^m)$ has a non-trivial element of rank at most $2k$. This kind of problems are known as \emph{MinRank problems} and are deeply studied from the computational complexity point of view, with applications in cryptanalysis (cf. \cites{complexity_minrank,cripto_minrank}). The two problems are actually equivalent in the following sense: on the one hand, given a stratified Lie algebra $\g=\R^m \oplus V_2 \oplus \cdots \oplus V_s$ of rank $m$ and step $s$, we get $S \subseteq \sk(\R^m)$ as the image of the Kaplan's operator. On the other hand, given a linear subspace $S \subseteq \sk(\R^m)$, there exists a stratified Lie algebra of rank $m$ and step $2$ whose image of the Kaplan's operator is $S$. The construction of such Lie algebra is given by the following remark.

\begin{remark} \label{def_gs}
    Given a subspace $S \subseteq \sk(\R^m)$, we define the step $2$ stratified Lie algebra $\g_S \coloneq \R^m \oplus S^*$, where $S^*$ is the dual space of $S$, as follows: for every $v,w \in \R^m$, the Lie bracket $[v,w] \in S^*$ is formally defined by setting
    \begin{equation*}
        [v,w](\omega) \coloneq \omega(v,w) \qquad \text{for every $\omega \in \sk(\R^m)$.}
    \end{equation*}
    With this definition, it is immediate to check that the Kaplan's operator becomes the natural isomorphism $\J \colon S^{**} \to S$ between a finite-dimensional vector space and its double dual.
\end{remark}

The latter discussion suggests that the existence of an easy criterion for the hypergenerating property is unreasonable, since it can be converted to an instance of the MinRank problem, which is known to be NP-hard (see \cite{MR4262585}). The MinRank problem for $S \subseteq \sk(\R^m)$ (identified with $\mathfrak{so}_m$, i.e., the space of $m \times m$ skew-symmetric matrices) has the same complexity as the general case $S \subseteq \mathfrak{gl}_m$. Indeed, every linear subspace in $\mathfrak{gl}_m$ can be mapped in a linear subspace of $\mathfrak{so}_{2m}$ through the map
\begin{equation*}
    f \colon A \mapsto \begin{pmatrix}
        0 & -A \\ A^T & 0
    \end{pmatrix},
\end{equation*}
and the rank of $A$ is less than $k$ if and only if the rank of $f(A)$ is less than $2k$. To conclude, one can implement algorithms that determines wether a given stratified Lie algebra is hypergenerated of order $k$. However, the best known algorithms can only give the answer in exponential time with respect to the size of the input.

\subsection{Hypergenerated groups up to dimension $7$}
We present all hypergenerated groups of topological dimension up to $7$. In view of \cref{firstproperties}, we restrict to {\em indecomposable} Lie groups, i.e., those that are not the direct product of non-trivial stratified groups. We recall that stratified groups of step $1$ are hypergenerated (of order $k$ for every $k \ge 0$). We then focus on indecomposable stratified groups of step $s \ge 2$. As a consequence of \cref{rankprop}, hypergenerated groups of step $s \ge 2$ must have rank at least $2$, thus topological dimension at least $5$. By the classification of stratified Lie groups provided by \cite{MR4490195}, there are only six indecomposable, hypergenerated Lie groups of step $s \ge 2$ and topological dimension between $5$ and $7$. 

A direct computation of the image of their Kaplan's operators shows that the following four stratified groups of step $2$ are Métivier groups. In view of \cref{res:H-type}, they are hypergenerated.
\begin{example} The only indecomposable hypergenerated group of dimension $5$ is the second Heisenberg group. Its associated stratified Lie algebra, of dimension $5$, rank $4$ and step $2$, is $$\g=\spann\{X_1,X_2,X_3,X_4,T\}$$  whose only non-trivial bracket relations are 
\begin{equation*}
    [X_1,X_2]=[X_3,X_4]=T.
\end{equation*}
\end{example}

\begin{example}\label{metiviersei}
    The only indecomposable hypergenerated group of dimension $6$ is the stratified group denoted by $N_{6,4,4a}$. The non-trivial bracket of its associated Lie algebra $$\g=\spann\{X_1,X_2,X_3,X_4,T_1,T_2\}$$ of dimension $6$, rank $4$ and step $2$ are 
        \begin{equation*}
    [X_1,X_3]=[X_2,X_4]=T_1\qquad\text{and}\qquad [X_1,X_4]=[X_3,X_2]=T_2.
\end{equation*}
\end{example}

\begin{example} The third Heisenberg group $\hh^3$ is the only indecomposable stratified group of dimension $7$ that is hypergenerated of order $2$. The only non-trivial bracket relations of its associated Lie algebra $\g=\spann\{X_1,\ldots,X_6,T\}$ of dimension $7$, rank $6$ and step $2$ are
        \begin{equation*}
    [X_1,X_2]=[X_3,X_4]=[X_5,X_6]=T.
\end{equation*}

\end{example}
\begin{example} \label{quaternions}
    We denote by $37D_1$ the stratified group with associated Lie algebra $$\g=\spann\{X_1,\ldots,X_4,T_1,\ldots,T_3\}$$ of dimension $7$, rank $4$ and step $2$ whose only non-trivial bracket relations are 
        \begin{equation*}
    [X_1,X_2]=[X_4,X_3]=T_1,\quad [X_1,X_3]=[X_2,X_4]=T_2\quad\text{and}\quad[X_1,X_4]=[X_3,X_2]=T_3.
\end{equation*}
Both $\hh^2$ and $N_{6,4,4a}$ can be seen as quotients of $37D_1$.
\end{example}

The latter two examples 
are not Métivier groups.
\begin{example}\label{example}
We consider the stratified group $27B$ associated to the Lie algebra $$\g=\spann\{X_1,\ldots,X_5,T_1,T_2\}$$ of dimension $7$, rank $5$ and step $2$ whose only non-trivial braket relations are
\begin{equation*}    
[X_1,X_2]=[X_3,X_4]=T_1,\quad[X_1,X_5]=[X_2,X_3]=T_2.
\end{equation*}
Since its rank is odd, the stratified group $27B$ is not a Métivier group. Nevertheless, as proved in \cite{psv}, it is hypergenerated. 
%
%
\end{example}

\begin{example}
    The only indecomposable hypergenerated group of step $3$ and dimension $7$ is the group denoted by $137A_1$. The only non-trivial bracket relations of its associated Lie algebra $\g=\spann\{X_1,\ldots,X_4,T_1,T_2,S\}$ of dimension $7$, rank $4$ and step $3$ are 
        \begin{equation*}
    [X_1,X_3]=[X_2,X_4]=T_1,\quad [X_1,X_4]=[X_3,X_2]=T_2\quad\text{and}\quad [X_1,T_1]=[X_2,T_2]=S.
\end{equation*}
It is easy to see that $\g/\spann\{S\}$ is isomorphic to the Lie algebra associated to $N_{6,4,4a}$, which is hypergenerated. Thus, by  \cref{quotient_hypergenerated}, the Lie algebra $137A_1$ is hypergenerated. 
\end{example}

\subsection{Existence of hypergenerated Lie algebras of large step}

We conclude this section by discussing how the property of being hypergenerated of order $k$ does not put a constrain on the step of the Lie algebra. We first focus on the case $k=1$.

\begin{proposition} \label{existencehypergenerated}
    For every $m \ge 4$ and $s \ge 1$ there exists a hypergenerated Lie algebra of rank $m$ and step $s$.
\end{proposition}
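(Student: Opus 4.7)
I would proceed by cases on the step $s$. For $s=1$, any rank-$m$ abelian Lie algebra has $[\g,\g]=0$ and is vacuously hypergenerated. For $s=2$, the second Heisenberg algebra $\hh^2$ of rank $4$ is M\'etivier of order $4$: its Kaplan operator is one-dimensional, spanned by the rank-$4$ skew form $\omega(v,w) = v_1w_2 - v_2w_1 + v_3w_4 - v_4w_3$ on $\R^4$. By \cref{hyper_order}, $\hh^2$ is hypergenerated. For $m \ge 4$, the direct product $\hh^2 \times \R^{m-4}$ is then hypergenerated of rank $m$ and step $2$ by \cref{firstproperties}(i), the abelian factor being trivially hypergenerated.

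For $s \ge 3$, I would realize $\g$ as a quotient of the free nilpotent Lie algebra. Fix a hypergenerated step-$2$ algebra $\g_0 = V_1 \oplus W$ of rank $m$ from the previous case, and let $\f := \f_{m,s}$ denote the free nilpotent Lie algebra of rank $m$ and step $s$, with strata $U_1 = V_1, U_2, \ldots, U_s$. Since no Jacobi relations arise in layer $2$, one has $U_2 \cong \Lambda^2 V_1$, and the Lie bracket of $\g_0$ factors through a linear surjection $\pi \colon U_2 \twoheadrightarrow W$. Let $J := \ker\pi$, let $\mathfrak{J} \subseteq \f$ be the homogeneous ideal generated by $J$, and define $\g := \f/\mathfrak{J}$. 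Then $\g$ is stratified of rank $m$ and step at most $s$, with $\g/\g^3 \cong \g_0$; hence $\g$ is hypergenerated by \cref{quotient_hypergenerated}.

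The main obstacle is to verify that $\g$ has step exactly $s$, equivalently that $U_s \not\subseteq \mathfrak{J}$. Since $\mathfrak{J}$ is generated in layer $2$, its component $\mathfrak{J} \cap U_s$ is spanned by iterated brackets $[Y_1,[Y_2,\ldots,[Y_p,j]\ldots]]$ with $Y_\ell \in U_{w_\ell}$, $\sum_\ell w_\ell = s-2$, and $j \in J$. A crude dimensional estimate based on Witt's formula---$\dim U_s$ growing as $m^s/s$ versus $\dim(\mathfrak{J}\cap U_s) \lesssim m^{s-2}\dim J$---handles the regime where $s$ is large relative to $m$ and $\dim J$, but intermediate values of $s$ require a finer analysis exploiting the Jacobi-induced dependencies among the generators of $\mathfrak{J}$. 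An alternative, more hands-on route is by induction on $s$: starting from a $\g_0$ with sufficiently rich layer-$2$ structure (for instance, the quaternionic Heisenberg algebra $37D_1$ of \cref{quaternions} when $m = 4$, or a direct product involving it for larger $m$), one builds $\g$ as an iterated central extension of $\g_0$, adjoining at each step $k$ a non-trivial $2$-cocycle $\alpha_k \colon V_1 \wedge V_k \to \R$; the persistent availability of such cocycles is the chief technical point and must be verified stage by stage.
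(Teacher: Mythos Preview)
Your treatment of $s\le 2$ is correct, and the overall strategy for $s\ge 3$---arrange that $\g/\g^3$ is a known hypergenerated step-$2$ algebra and invoke \cref{quotient_hypergenerated}---is exactly right. But the construction you propose has a genuine gap, and it is not merely a missing estimate.

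Take $m=4$ and $\g_0=\hh^2$, your step-$2$ building block. Then \emph{every} stratified Lie algebra $\g$ with $\g/\g^3\cong\hh^2$ has step exactly $2$. Indeed, writing $V_2=\R T$ with $T=[X_1,X_2]=[X_3,X_4]$ and all other horizontal brackets zero, the Jacobi identity forces
\[
[X_1,T]=[X_1,[X_3,X_4]]=-[X_3,[X_4,X_1]]-[X_4,[X_1,X_3]]=0,
\]
and likewise $[X_i,T]=0$ for every $i$, so $V_3=[V_1,V_2]=0$. Hence $\f_{4,s}/\mathfrak{J}$ collapses to step $2$ for all $s\ge 3$; no refinement of the dimension count can save this choice of $\g_0$. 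Your instinct to switch to $37D_1$ is the right one, but you do not verify that the step survives there either, and the crude bound you quote already fails at $m=4$, $s=6$ (Witt gives $\dim U_6=670$, while $4^{4}\cdot 3=768$). The alternative route via iterated central extensions is likewise left open by your own account.

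The paper sidesteps the step-verification issue entirely with a different construction: it forms the semi-direct sum $\g=\f_{3,s}\rtimes_{\widetilde\phi}\R$, where $\f_{3,s}$ is the free nilpotent Lie algebra of rank $3$ and step $s$, and $\widetilde\phi$ is the derivation extending $X_1\mapsto[X_2,X_3]$, $X_2\mapsto[X_3,X_1]$, $X_3\mapsto[X_1,X_2]$. Since $\f_{3,s}$ sits inside $\g$ as a Lie subalgebra, $\g$ automatically has step $s$; and one checks directly that $\g/\g^3\cong 37D_1$, so $\g$ is hypergenerated by \cref{quotient_hypergenerated}. The product $\g\times\R^{m-4}$ (via \cref{firstproperties}) then handles general $m\ge 4$.
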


\cref{existencehypergenerated} follows from the following remarkable example.

\begin{example} \label{quaternionic}
  Consider the free-nilpotent Lie algebra $\f$ of rank $3$ and step $s$ (see \cite[Example 2.5]{primer}), together with a stratification $\f = V_1 \oplus \cdots \oplus V_s$. Fix a basis $\set{X_1,X_2,X_3}$ of $V_1$ and define a linear map $\phi \colon V_1 \to V_2$ by setting $\phi(X_1)=[X_2,X_3]$, $\phi(X_2)=[X_3,X_1]$, and $\phi(X_3)=[X_1,X_2]$. By \cite[Proposition 3]{free_extension}, the map $\phi$ extends to a derivation $\widetilde{\phi} \colon \f \to \f$ (i.e. $\widetilde{\phi}([Y_1,Y_2])=[\widetilde{\phi}(Y_1),Y_2]+[Y_1,\widetilde{\phi}(Y_2)]$ for every $Y_1,Y_2 \in \f$). We then consider the semi-direct sum by $\widetilde{\phi}$, i.e., the Lie algebra $\g \coloneq \f \oplus \R$ with Lie brackets given, for every 
  $Y_1,Y_2 \in \f$ and $t_1,t_2 \in \R$, by
  {\begin{equation*}
      [(Y_1,t_1),(Y_2,t_2)]_\g=([Y_1,Y_2]_\f + t_1\widetilde{\phi}(Y_2) - t_2\widetilde{\phi}(Y_1),0).
      \end{equation*}}

Since $\widetilde{\phi}(V_1) \subseteq V_2$, we obtain that $\g$ is again a stratified Lie algebra with stratification $\g = (V_1 \oplus \R) \oplus V_2 \oplus \cdots \oplus V_s$. Therefore $\g$ is a stratified Lie algebra of rank $4$ and step $s$. Moreover, it is immediate to check that $\g/\g^3$ is isomorphic to the Lie algebra in \cref{quaternions}, which is hypergenerated. By \cref{quotient_hypergenerated}, we conclude that $\g$ is hypergenerated as well.
\end{example}

\begin{remark}
    We stress another property of the Lie algebra $\g$ defined in \cref{quaternionic}. For every $1$-codimensional subspace $P \subseteq (V_1 \oplus \R)$, the stratified Lie algebra $\Lie(P)=P \oplus [\g,\g]$ has the same rank, step, and dimension of the free-nilpotent Lie algebra $\f$ of rank $3$ and step $s$, therefore $\Lie(P)$ is isomorphic to $\f$. We conclude that all stratified Lie algebras in the family
    \begin{equation*}
        \set{ \Lie(P) \, : \, P \subseteq (V_1 \oplus \R), \, \dim(P)=3 }
    \end{equation*}
    are isomorphic.
\end{remark}

\begin{proof}[Proof of \cref{existencehypergenerated}]
  Fix $m \ge 4$ and $s \ge 1$. Let $\g$ be the Lie algebra of rank $4$ and step $s$ given by \cref{quaternionic}. Then the Lie algebra $\g \times \R^{m-4}$ has rank $m$ and step $s$. Moreover, by \cref{firstproperties}, the Lie algebra $\g$ is hypergenerated.
\end{proof}

Next, we discuss the case of hypergenerated Lie algebras of order higher than $1$. We prove the following fact.

\begin{proposition} \label{existence_k_hypergenerated}
    For every $k \ge 2$ and $s \ge 2$, there exists an hypergenerated Lie algebra of step $s$, and order $k$.
\end{proposition}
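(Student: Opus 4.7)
The plan is to handle the two cases $s = 2$ and $s \ge 3$ separately.

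For $s = 2$, I take $\g = \hh^{k+1}$, the $(k{+}1)$-st Heisenberg algebra (rank $2(k+1)$, one-dimensional second layer). For every non-zero $\mu \in (V_2)^*$, the associated Kaplan form on $V_1 \cong \R^{2(k+1)}$ is, up to a non-zero scalar, the standard symplectic form and hence has maximal rank $2(k+1)$. Therefore $\order(\hh^{k+1}) = 2(k+1) > 2k$, and by \cref{hyper_order}, $\hh^{k+1}$ is hypergenerated of order $k$.

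For $s \ge 3$, I generalize the construction of \cref{quaternionic}. Let $\f$ be the free nilpotent Lie algebra of step $s$ and rank $m$ (to be chosen sufficiently large), and fix a collection of linear maps $\phi_1, \ldots, \phi_d \colon V_1^\f \to V_2^\f$. Each $\phi_i$ extends uniquely to a degree-$+1$ derivation $\tilde\phi_i$ of $\f$. Provided that these derivations pairwise commute, I form the semi-direct sum $\g = \f \oplus \R^d$, where $\R^d$ is abelian and acts on $\f$ via the $\tilde\phi_i$'s. Then $\g$ is a stratified Lie algebra with $V_1^\g = V_1^\f \oplus \R^d$ and $V_j^\g = V_j^\f$ for $j \ge 2$; since each $\tilde\phi_i$ shifts the stratification degree by $+1$ and hence annihilates the top layer $V_s^\f$, the algebra $\g$ has step exactly $s$. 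By \cref{quotient_hypergenerated}, it remains to ensure that the step-2 quotient $\g/\g^3$ is hypergenerated of order $k$.

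This step-2 quotient has first layer $V_1^\f \oplus \R^d$, second layer $V_2^\f$, and brackets inherited from the free step-2 structure on $V_1^\f$ together with the off-diagonal terms $[T_i, X] = \phi_i(X)$. By \cref{hyper_order}, hypergeneration of order $k$ is equivalent to the requirement that every non-zero Kaplan form $\J_\mu$ on $V_1^\g$ have rank strictly greater than $2k$. The main obstacle is to satisfy simultaneously (i) the pairwise commutativity of the $\tilde\phi_i$'s, which is an algebraic condition in $V_3^\f$, and (ii) the required rank bound, which after a block-matrix computation becomes an explicit condition on the pull-backs $\mu \circ \phi_i \in (V_1^\f)^*$. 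Both conditions can be arranged by choosing the $\phi_i$'s to encode an appropriate algebraic action on $V_1^\f$—for instance, a Clifford-module action or a higher-dimensional analogue of the quaternionic cross product used in \cref{quaternionic}—so that $\g/\g^3$ contains, up to an abelian direct factor, a Métivier algebra of order exceeding $2k$. \cref{firstproperties} then propagates hypergeneration of order $k$ through this direct product, and \cref{quotient_hypergenerated} yields the conclusion.
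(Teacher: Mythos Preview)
Your treatment of the case $s=2$ is correct: the Heisenberg algebra $\hh^{k+1}$ is M\'etivier of rank $2(k+1)$, hence has M\'etivier order $2(k+1)>2k$, and \cref{hyper_order} applies.

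For $s\ge 3$, however, the proposal has a genuine gap. The entire argument rests on the final paragraph, which asserts---without construction or verification---that one can choose maps $\phi_1,\ldots,\phi_d\colon V_1^\f\to V_2^\f$ whose derivation extensions (i) pairwise commute and (ii) force every nonzero Kaplan form of $\g/\g^3$ to have rank exceeding $2k$. Neither condition is innocuous. Since $[T_i,T_j]=0$, the subspace $\R^d\subseteq V_1^\g$ is isotropic for every $\J_\mu$; in particular $\g/\g^3$ is never M\'etivier once $d>m$, and for $\mu$ of rank~$2$ one always has $\rank\J_\mu\le 2+2d$. Thus the rank bound already forces $d\ge k$, and the commutativity of $d\ge k\ge 2$ degree-$+1$ derivations of the \emph{free} nilpotent algebra is a non-trivial system of equations in $V_3^\f$. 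The allusions to ``Clifford-module actions'' or ``higher-dimensional quaternionic cross products'' do not supply such derivations: these constructions naturally produce endomorphisms of $V_1$ or maps into a \emph{small} second layer, not into the full $\wedge^2 V_1^\f$ that your $\g/\g^3$ carries. In short, you have identified the two obstructions correctly but have not shown they can be simultaneously met.

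The paper takes a different route that avoids the commutativity issue entirely. It works inside the free-\emph{metabelian} algebra $\m$ of step $s$ and large rank $m$, where the second layer is still $\wedge^2 V_1$ but the higher layers are small enough to admit explicit dimension estimates (\cref{estimate_dkm}, \cref{estimate_ckm}). One then invokes an external result on large linear subspaces $S\subseteq\sk(\R^m)$ all of whose nonzero elements have rank $\ge 2k+2$, takes the corresponding $W\subseteq V_2$ via \cref{kaplan_iso}, and quotients by the ideal $I(W)$. The quotient has Kaplan image exactly $S$ (hence is hypergenerated of order $k$), and the dimension count \eqref{condition m} guarantees that the top layer $V_s$ survives. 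This bypasses any need to extend the first layer or to find commuting derivations.
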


We obtain the Lie algebra in the statement of \cref{existence_k_hypergenerated} as some quotient of a \emph{free-metabelian Lie algebra}, which we now introduce.

\begin{definition} \label{free_metabelian}
    Fix $s,m \in \N$. The \emph{free-metabelian Lie algebra of step $s$, generated by $X_1,\dots,X_m$}, is the stratified Lie algebra
    $\m = V_1 \oplus \dots \oplus V_s$, where $X_1,\dots,X_m$ is a basis of $V_1$ and, for every $2 \le k \le s$, a basis of $V_k$ is given by the formal elements
    \begin{equation} \label{basis_free_metabelian}
        (X_{i_1},X_{i_2},\dots,X_{i_k}) \quad \text{with $1 \le i_h \le m, \, i_1 > i_2 \le \cdots \le i_k$}
    \end{equation}
    and the only non-trivial brackets relations between elements of the basis are given by
    \begin{enumerate}
        \item $[X_i,X_j]=(X_i,X_j)$ if $i>j$, 
        \item $[(X_{i_1},X_{i_2},\dots,X_{i_k}),X_j]=(X_{i_1},X_{i_2},\dots,X_{i_k},X_j)$ if $2 \le k \le s-1$ and $i_k < j$,
        \item $[(X_{i_1},X_{i_2},\dots,X_{i_k}),X_j]=(X_{i_1},X_{i_2},\dots,X_{i_h},X_j,X_{i_{h+1}},\dots,X_{i_k})$ if $2 \le k \le s-1$ and $i_h \le j \le i_{h+1}$ for some $2 \le h \le k$,
        \item $[(X_{i_1},X_{i_2},\dots,X_{i_k}),X_j]=(X_{i_1},X_j,X_{i_2},\dots,X_{i_k})-[(X_{i_2},X_j,X_{i_3},\dots,X_{i_k}),X_{i_1}]$ if $2 \le k \le s-1$ and $i_2 > j$.
    \end{enumerate}
We also refer to such Lie algebra $\m$ as the \emph{free-metabelian Lie algebra of step $s$ and rank $m$}. 
\end{definition}

\begin{remark}
  In the latter definition, $\m$ is indeed a Lie algebra (see \cites{MR159847,MR2905023}), in particular the Jacobi identity is satisfied. Moreover, the Lie algebra $\m$ is \emph{metabelian} (or \emph{$2$-step solvable}), i.e., $[\m^{2},\m^{2}]=\set{0}$. We claim that 
  \begin{equation} \label{adjoint_commute}
  [[Y,Z_1],Z_2]=[[Y,Z_2],Z_1] \quad \text{for every $Y \in \m^2$ and $Z_1,Z_2 \in \m$.}    
  \end{equation}
  Indeed, by the Jacobi identity, $[[Y,Z_1],Z_2]-[[Y,Z_2],Z_1]=[[Z_2,Z_1],Y] \in [\m^{2},\m^{2}]=\set{0}$.
\end{remark}



We consider free-metabelian Lie algebras because it is easy to compute the dimension of each layer of its stratification, as well as estimating the growth of ideals generated by subspaces of the second layer. This is clarified by the following two lemmas.

\begin{lemma}
    Let $\m = V_1 \oplus \cdots \oplus V_s$ be the free-metabelian Lie algebra of step $s$ and rank $m$, we define $d_k^m \coloneq \dim(V_k)$. Then
    \begin{equation} \label{estimate_dkm}
        d_k^m=(k-1)  \binom{m+k-2}{k} \quad \text{for every $2 \le k \le s$.}
    \end{equation}
\end{lemma}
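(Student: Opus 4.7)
The plan is to count directly the formal basis elements of $V_k$ listed in \eqref{basis_free_metabelian}. Concretely, $d_k^m$ equals the number of tuples $(i_1,\dots,i_k) \in \{1,\dots,m\}^k$ satisfying $i_1 > i_2$ together with $i_2 \le i_3 \le \cdots \le i_k$. I would split this count according to the value of the second entry $i_2$.

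Fixing $i_2 = a \in \{1,\dots,m-1\}$, the first entry $i_1$ may be any element of $\{a+1,\dots,m\}$, contributing $m-a$ choices, while the tail $(i_3,\dots,i_k)$ is an arbitrary non-decreasing sequence of length $k-2$ taking values in $\{a,\dots,m\}$. The standard stars-and-bars computation gives $\binom{m-a+k-2}{k-2}$ such tails. Substituting $j = m-a$ yields
\begin{equation*}
    d_k^m = \sum_{j=1}^{m-1} j \binom{j+k-2}{k-2}.
\end{equation*}

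From here I would apply the elementary identity $j\binom{j+k-2}{k-2} = (k-1)\binom{j+k-2}{k-1}$, which is immediate from the factorial expressions, to rewrite the sum as $(k-1)\sum_{j=1}^{m-1}\binom{j+k-2}{k-1}$. The hockey-stick identity, after shifting the summation index to $i = j+k-2$ (so that $i$ runs from $k-1$ to $m+k-3$), gives $\sum_{j=1}^{m-1}\binom{j+k-2}{k-1} = \binom{m+k-2}{k}$, and this establishes \eqref{estimate_dkm}.

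The argument is purely enumerative and presents no substantial obstacle; the only point that requires a little care is tracking the index bounds when invoking the hockey-stick identity. Crucially, the tractability of the count depends on using the explicit description of the basis in \eqref{basis_free_metabelian} rather than the bracket relations of \cref{free_metabelian} — attempting to derive the dimension from the defining relations directly (for instance, via a dimension count in the free Lie algebra modulo the metabelian ideal) would be considerably more involved.
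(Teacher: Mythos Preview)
Your proof is correct and follows essentially the same approach as the paper: both count the basis elements \eqref{basis_free_metabelian} by fixing the value of $i_2$ and arrive at the identical sum $\sum_{j=1}^{m-1} j\binom{j+k-2}{k-2}$ (the paper writes $\binom{h+k-2}{h}$, which is the same binomial coefficient). Your simplification via the absorption identity $j\binom{j+k-2}{k-2}=(k-1)\binom{j+k-2}{k-1}$ followed by the hockey-stick identity is slightly more streamlined than the paper's repeated use of Pascal's rule and telescoping, but this is a cosmetic difference rather than a genuinely different route.
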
  

\begin{proof}
It follows by computing the number of elements in $\eqref{basis_free_metabelian}$. More precisely, for every $1 \le h \le m-1$, the elements in $\eqref{basis_free_metabelian}$ with $i_2=m-h$ are
\begin{equation*}
    h  \binom{h+k-2}{h}.
\end{equation*}
Therefore
\begin{align*}
    d_k^m &= \sum_{h=1}^{m-1} h  \binom{h+k-2}{h} \\
    &= \sum_{h=1}^{m-1} h  \binom{h+k-1}{h} - \sum_{h=1}^{m-1} h  \binom{h+k-2}{h-1} \\
    &=(m-1)\binom{m+k-2}{m-1}-\sum_{h=1}^{m-1}\binom{h+k-2}{h-1}  \\ &= (m-1)\binom{m+k-2}{m-1}-\sum_{h=1}^{m-1}\binom{h+k-1}{h-1}+ \sum_{h=1}^{m-1} \binom{h+k-2}{h-2} \\
    &= (m-1)\binom{m+k-2}{m-1} - \binom{m+k-2}{m-2} \\
    &= (k-1)  \binom{m+k-2}{k} \, . \qedhere
\end{align*}
\end{proof}

\begin{remark} \label{kaplan_iso}
    From \eqref{estimate_dkm}, we get that $\dim(V_2)=\binom{m}{2}=\dim(\sk(V_1))$, therefore the Kaplan's operator $\J \colon V_2^* \to \sk(V_1)$ defines an isomorphism. We infer that, for every subspace $S \le \sk(V_1)$, there exists a unique subspace $W\subseteq V_2$, with $\dim(W)=\dim(\sk(V_1)) - \dim(S)$, such that $\J(W^\perp)=S$, where $W^\perp \coloneq \set{\varphi \in V^*_2: \varphi(W)=0}$ is the annihilator of $W$.
\end{remark}

\begin{lemma}
    Let $\m = V_1 \oplus \cdots \oplus V_s$ be the free-metabelian Lie algebra of step $s$, generated by $X_1,\dots,X_m$, and $W$ be a subspace of $V_2$. We denote by $I(W)$ be the ideal of $\m$ generated by $W$ and we define $c_k^m(W) \coloneq \dim(I(W) \cap V_k)$. Then 
    \begin{equation} \label{estimate_ckm}
        c_k^m(W) \le \dim(W)  \binom{m+k-3}{k-2}.
    \end{equation}
\end{lemma}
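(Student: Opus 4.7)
The plan is to combine the metabelian hypothesis with the commutation identity \eqref{adjoint_commute} in order to produce an explicit spanning set of the required size.

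First, I would show that $I(W) \cap V_k$ is spanned by iterated brackets of the form
\begin{equation*}
\ad(Y_{k-2}) \cdots \ad(Y_1)\, w, \qquad w \in W,\; Y_j \in V_1 .
\end{equation*}
Indeed, by definition, the ideal $I(W)$ is spanned by $W$ together with all iterated brackets $[\dots[[w,Z_1],Z_2],\dots,Z_n]$ with $w \in W$ and homogeneous $Z_i \in \m$. Because $w \in W \subseteq V_2 \subseteq \m^2$ and $\m^2$ is an ideal, every partial bracket lies in $\m^2$. If some $Z_j$ belongs to a layer $V_l$ with $l \ge 2$, then at the step of bracketing with $Z_j$ we would pair two elements of $\m^2$, which is zero since $\m$ is metabelian. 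Hence only $Z_j \in V_1$ can contribute, and matching weights forces $n=k-2$.

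Second, I would invoke the identity \eqref{adjoint_commute}, which says that $\ad(X_i)$ and $\ad(X_j)$ commute when applied to any element of $\m^2$. Since $w \in \m^2$, this implies that the bracket
\begin{equation*}
\ad(X_{i_{k-2}}) \cdots \ad(X_{i_1})\, w
\end{equation*}
depends only on $w$ and on the \emph{multiset} $\{i_1,\dots,i_{k-2}\}$ of indices from $\{1,\dots,m\}$. Consequently, fixing a basis $w_1,\dots,w_{\dim(W)}$ of $W$, the space $I(W) \cap V_k$ is spanned by the elements $\ad(X_{i_{k-2}}) \cdots \ad(X_{i_1})\, w_l$ as $(i_1,\dots,i_{k-2})$ ranges over multisets and $l$ ranges over $\{1,\dots,\dim(W)\}$.

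Finally, I would count: the number of multisets of size $k-2$ drawn from $\{1,\dots,m\}$ equals $\binom{m+k-3}{k-2}$, so the spanning set just exhibited has cardinality at most $\dim(W) \cdot \binom{m+k-3}{k-2}$, which gives the bound \eqref{estimate_ckm}. The only genuinely delicate step is the first one, namely carefully justifying (via repeated application of the Jacobi identity together with the metabelian property) that any element of $I(W) \cap V_k$ can be rewritten as a sum of brackets in which $w \in W$ sits at the innermost position and all the $Y_j$'s belong to $V_1$; the commutation identity \eqref{adjoint_commute} and the combinatorial count are then immediate.
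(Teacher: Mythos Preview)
Your proposal is correct and follows essentially the same route as the paper's proof: exhibit the spanning set $[[\cdots[W_j,X_{i_1}],\cdots],X_{i_{k-2}}]$ with ordered indices $i_1\le\cdots\le i_{k-2}$, justify the ordering via \eqref{adjoint_commute}, and count. Your write-up is in fact more explicit than the paper's, which compresses the reduction to $V_1$-brackets into the single phrase ``$\m$ is stratified and generated by $X_1,\dots,X_m$''; your use of the metabelian property to kill any bracket with $Z_j\in V_{\ge 2}$ is a clean alternative justification for that same reduction.
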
   

\begin{proof}
    Let $W_1,\dots,W_{\dim(W)}$ be a basis of $W$. Since $W \subseteq \m^2$, by \eqref{adjoint_commute} and the fact that $\m$ is stratified and generated by $X_1,\dots,X_m$, we get that elements of the form
    \begin{equation*} 
        [[\cdots[W_j,X_{i_1}],\cdots],X_{i_{k-2}}] \quad \text{with $1 \le j \le \dim(W)$ and $i_1 \le \dots \le i_{k-2}$}
    \end{equation*}
    generate $I(W) \cap V_k$. The result follows by computing the number of those elements.
\end{proof}



We are finally ready for the final proof of this section:

\begin{proof}[Proof of \cref{existence_k_hypergenerated}]
    Fix $k \ge 2$, $s \ge 2$, and $m$ to be chosen large enough, so that
    \begin{equation} \label{condition m}
    2km - 2k^2 - k < \frac{(m+s-2)(m-1)}{s},
\end{equation}
is satisfied. Consider the free-metabelian Lie algebra $\m = V_1 \oplus \cdots \oplus V_s$ of step $s$ and rank $m$. Following the proof of \cite[Theorem 1]{large_rank} in the skew-symmetric setting, we can select a linear subspace $S \subseteq \sk(V_1)$, with $\dim(S)=(m-2k)(m-2k-1)/2$, such that all non-zero elements of $S$ have rank at least $2k+2$. By \cref{kaplan_iso}, there exists a linear subspace $W \subseteq V_2$ such that $\J(W^\perp)=S$, where $\J$ is the Kaplan's operator of $\m$, and 
    \begin{equation} \label{eq_dimW}
        \dim(W) = \dim(\sk(V_1)) - \dim(S) = \binom{m}{2} - \binom{m-2k}{2}= 2km - 2k^2 - k .
    \end{equation}
    
Denote by $I \coloneq I(W)$ the ideal in $\m$ generated by $W$. Since $W$ is dilation invariant (i.e. invariant under the maps defined in \eqref{dilation_lie_algebra}), we get that $I$ is dilation invariant as well. Therefore, dilation maps commute with the quotient map $\pi \colon \m \to \m/I$ and the structure of stratified Lie algebra is preserved. The stratification is given by:
\begin{equation*}
    \m/I = V_1 \oplus V_2/W \oplus V_3/(I \cap V_3) \oplus \cdots \oplus V_s/(I \cap V_s) \,.
\end{equation*}

We claim that $\m/I$ is hypergenerated of order $k$. Indeed, the image of the Kaplan's operator $\widetilde{\J}$ of $\m/I$ is
\begin{equation*}
    \widetilde{\J} \left( \left(V_2/W\right)^*\right) = \J(W^\perp)=S,
\end{equation*}
where we used the natural identification $\left(V_2/W\right)^* \cong W^\perp \subseteq V_2^*$ and the fact that $\m$ and $\m/I$ have the same first stratum. Since all non-zero elements of $S$ have rank at least $2k+2$, then $\order(\m/I) \ge 2k+2$. By \cref{hyper_order} we get that $\m/I$ is hypergenerated of order $k$. 

It is left to prove that $\m/I$ is stratified of step $s$. This is the case if and only if $\dim\left(V_s/(I \cap V_s)\right) > 0$, i.e., $\dim(V_s) > \dim(I \cap V_s)$. By combining \eqref{estimate_dkm} and \eqref{estimate_ckm}, it is sufficient to verify that
\begin{equation*}
    \dim(W)  \binom{m+s-3}{s-2} < (s-1)  \binom{m+s-2}{s}.
\end{equation*}
In view of \eqref{eq_dimW}, and after some easy computations, the latter inequality is equivalent to \eqref{condition m}. We conclude that $\m/I$ is stratified of step $s$. The statement is then proved by setting $\g \coloneq \m/I$. \end{proof}

\section{Hypersurfaces with locally constant normal}
\label{sec:locally_const_normal}
In this section we show that the class of hypergenerated groups constitutes the correct environment to deal with regularity issues for perimeter minimizers in stratified Groups. We briefly recall some basic preliminaries, for which we refer to \cite{MR3587666}. In the following, we fix a basis $X_1,\ldots,X_m$ of 
 the first layer $V_1$ of the stratified Lie algebra $\mathfrak{g}=V_1\oplus\ldots \oplus V_s$, and we endow $\g$ with a left-invariant Riemannian metric $\langle\cdot,\cdot\rangle$ for which $X_1,\ldots,X_m$ is an orthonormal frame. Moreover, we identify $\G$ with $\rr^n$ via exponential coordinates of the first type.
 We fix an open set $\Om\subseteq\G$. A measurable set $E\subseteq\G$ is  \emph{of locally finite $\G$-perimeter} in $\Om$, or equivalently a \emph{$\G$-Caccioppoli set} in $\Om$, if
    \begin{equation}\label{caccioppolig}
    \sup\left\{\int_E \divv_\G(\varphi)\,dx\,:\,\varphi\in \Gamma_c(\tilde\Om,\Delta),\, \langle\varphi,\varphi\rangle\leq 1\right\}<+\infty,
\end{equation}
for all open set $\tilde\Om\Subset\Om$. Here and hereafter, the set $\Gamma_c(\tilde\Om,\Delta)$ denotes the family of smooth horizontal vector fields which are compactly supported in $\Om$, while $\divv_\G$ is the \emph{horizontal divergence}, defined by setting
\begin{eqnarray*}
    \divv_\G\left(\sum_{j=1}^m\varphi_jX_j\right):=\sum_{j=1}^mX_j\varphi_j.
\end{eqnarray*}
In addition, $E$ is of \emph{finite $\G$-perimeter} in $\Om$ if \eqref{caccioppolig} holds with $\tilde\Om=\Om$.
As in the Euclidean setting, if $E$ is a $\G$-Caccioppoli set in $\Om$, Riesz theorem implies the existence of a radon measure $P_\G(\cdot,\Om)$ on $\Om$ and of a $P_\G(\cdot,\Om)$-a.e. unique measurable horizontal vector field $\vg$ satisfying
\begin{equation*}
   \left\langle \vg,\vg\right\rangle=1,
\end{equation*} 
$P_\G(\cdot,\Om)$-a.e. in $\in\Om$ and
\begin{equation*}
   \int_E\divv_\G\varphi\,dx=-\int_\Om\langle\vg,\varphi\rangle\,dP_\G(\cdot,\Om),
\end{equation*}
for every $\varphi\in \Gamma_c(\Om,\Delta)$. The measure $P_\G(\cdot,\Om)$ is called the \emph{$\G$-perimeter measure} of $E$ in $\Om$, while $\vg$ is the (measure theoretic inner) \emph{horizontal unit normal} to $E$ in $\Om$. The \emph{$\G$-reduced boundary} $\partial^*_\G E$ of $E$ is the set of points $p\in\Om$ such that
    \begin{equation*}
        P_\G(E,B_r(p))>0,
    \end{equation*}
    for every $r>0$, the limit
    \begin{equation*}
       \lim_{r\to 0^+}\ave_{B_d(p,r)}\nu_\G\,dP_\G(E,\cdot)
    \end{equation*}
    exists, and its $\langle\cdot,\cdot\rangle$-norm is equal to $1$.
The \emph{measure-theoretic} boundary of a measurable set $E\subseteq\G$ is 
\begin{equation}
\label{eq:boundary}
\partial E
=
\set*{p\in \G : |E\cap B_r(p)|>0\
\text{and}\
|E^c\cap B_r(p)|>0\
\text{for all}\ r>0
}.
\end{equation}
Up to modifying a set $E\subseteq\G$ of locally finite $\G$-perimeter in an $\leb^n$-negligible way, arguing \emph{verbatim} as in \cite[Proposition 12.19]{MR2976521}, we can always assume that $\partial E$ coincides with the topological boundary of $E$, and that
    \begin{equation}\label{interiordensity}
        E=\left\{p\in\mathbb \G\,:\,\liminf_{r\to 0}\frac{|E\cap B_r(p)|}{|B_r(p)|}>0\right\}.
    \end{equation}
Finally, if $q\in\partial E$ and $r>0$, we say that $E$ has \emph{locally constant normal} at $q$ inside $B_r(q)$ if there exists a unit left-invariant vector field $\nu$ for which
   	$\vg(x)=\nu(x)$ for $|\partial E|_{\mathbb{G}}$-a.e. $x\in B_r(q)$.  With the next result, we extend \cite[Theorem 3.6]{psv} to hypergenerated groups of arbitrary step, proving the equivalence $(1)\Leftrightarrow(2)$ of \Cref{thm:main-theorem}. 

\begin{theorem}\label{hgiffchnimplhp}
    Let $\G$ be a Carnot group. The following are equivalent.
    \begin{itemize}
        \item [(i)] $\G$ is hypergenerated.
        \item [(ii)] For every set $E$ of finite $\mathbb{G}$-perimeter in $B_r(q)$, with $q\in\partial E$ and $r>0$, such that $E$ has locally constant normal at $q$ inside $B_r(q)$,
   	the boundary $\partial E$ is the vertical hyperplane orthogonal to $\nu$ inside $B_r(q)$.
    \end{itemize}
\end{theorem}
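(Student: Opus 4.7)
The plan is to translate the locally constant normal condition into a set-theoretic invariance of $E$ under flows of appropriate left-invariant vector fields, exploit the hypergenerated property to maximize this invariance, and identify $E$ with a half-space.

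For $(i) \Rightarrow (ii)$, first left-translate by $q^{-1}$ to assume $q = \id$ and set $P := \{X \in V_1 : \langle X, \nu \rangle = 0\}$, a subspace of codimension $1$ in $V_1$. Integrating the constant-normal identity against horizontal test vector fields $\varphi$ satisfying $\langle \varphi, \nu \rangle = 0$ gives $X \mathbf{1}_E = 0$ in the distributional sense on $B_r(\id)$ for every left-invariant $X \in P$; by the standard method of characteristics for $BV$ functions, $E$ is locally invariant under each flow $\Phi^X_t$ for $|t|$ small. One then propagates this invariance to the whole Lie subalgebra $\Lie(P)$ via the Baker--Campbell--Hausdorff approximation
\[
\Phi^Y_{-s} \circ \Phi^X_{-s} \circ \Phi^Y_s \circ \Phi^X_s = \Phi^{[X,Y]}_{s^2} + O(s^3),
\]
arguing by induction on bracket length. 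By \cref{hyper_step2}, the hypergenerated hypothesis yields $[\g,\g] \subseteq \Lie(P)$, hence $\Lie(P) = P \oplus [\g,\g]$, which is exactly the Lie algebra of the vertical hyperplane $\W := \exp(P \oplus [\g,\g])$. Therefore $E \cap B_r(\id)$ is a union of left $\W$-cosets; combining with $\id \in \partial E$ and the density characterization \eqref{interiordensity} forces $\partial E$ to coincide with $\W$ inside $B_r(\id)$, proving $(ii)$.

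For $(ii) \Rightarrow (i)$, we argue by contrapositive. Assume $\G$ is not hypergenerated and pick $P \subseteq V_1$ of codimension $1$ with $V_2 \not\subseteq \Lie(P)$; let $\nu$ be a unit horizontal vector orthogonal to $P$ and set $\mathbb H := \exp(\Lie(P))$, a proper closed subgroup of the vertical hyperplane $\W = \exp(P \oplus [\g,\g])$. Choose any $Z \in (P \oplus [\g,\g]) \setminus \Lie(P)$, so the left-invariant field generated by $Z$ is transverse to the foliation of $\G$ by left $\mathbb H$-cosets. Build a set $E$ as a union of $\mathbb H$-cosets whose profile in the $Z$-direction is a smooth, non-affine half-space: its boundary $\partial E$ is a $C^\infty$ hypersurface whose tangent space contains $\mathrm{d}L_p(\Lie(P)) \supseteq \mathrm{d}L_p(P)$ at every point, so $\vg \equiv \nu$ on $\partial E$, while $\partial E$ is not the vertical hyperplane $\W$. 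This contradicts $(ii)$.

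The main obstacle is the rigorous propagation of flow invariance along Lie brackets: although the formal identities $X\mathbf{1}_E = 0$ and the BCH expansion are straightforward, one must carefully exploit the measure-theoretic structure of finite-perimeter sets to guarantee that local invariance under $\Phi^X$ and $\Phi^Y$ yields exact (not merely asymptotic) invariance under $\Phi^{[X,Y]}$, controlling the higher-order BCH errors uniformly on compact subsets of $B_r(\id)$. This step is already present in the step-$2$ argument of \cite{psv}, and the general case follows by inductively peeling off one layer of the lower central series at a time; the combinatorial bookkeeping for deeply nested brackets is the delicate point.
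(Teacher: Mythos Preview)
Your overall strategy for both directions is correct and aligns with the paper. The construction for $(ii)\Rightarrow(i)$ is essentially the paper's \Cref{counterexchnnothpcodimalta}, only sketched; the paper writes it explicitly in exponential coordinates by splitting $\g=\spann\{X_1,\dots,X_k\}\oplus\Lie(P)\oplus W$ and graphing over the $W$-directions.

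For $(i)\Rightarrow(ii)$ your route diverges from the paper, and the ``main obstacle'' you flag is an artificial one. The paper does not propagate the distributional identities $X\mathbf 1_E=0$ to iterated brackets via BCH at all. Instead, it integrates by parts once to obtain $\int_E \tilde X_j\psi=0$ and $\int_E\nu\psi\le 0$, and then immediately converts these into \emph{set-theoretic} flow invariance and monotonicity via \Cref{montivittone} (the Monti--Vittone lemma): if $p\in E\cap B_r(0)$ and the flowed point stays in $B_r(0)$, it is still in $E$. Since the flow of a left-invariant field is a right translation, composing such flows gives exact invariance under arbitrary finite products $\exp(t_1\tilde X_{j_1})\cdots\exp(t_N\tilde X_{j_N})$; by Chow--Rashevskii applied to the Carnot subgroup $\W=\exp(\Lie(P))$, these products sweep out a full neighborhood of the identity in $\W$ while staying inside $B_r(0)$. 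Hence $E\cap B_r(0)$ is a union of local $\W$-cosets and one concludes as in \cite{MR3194680}. There is no ``approximate'' invariance under $\Phi^{[X,Y]}$ to upgrade, and no higher-order BCH remainder to control: working with set-theoretic flow invariance rather than with distributional derivatives of $\mathbf 1_E$ sidesteps the issue that $\mathbf 1_E$ is only $BV$ and cannot be differentiated twice. Your inductive BCH scheme could presumably be pushed through, but it is strictly more laborious than the concatenation-of-flows argument the paper (and \cite{MR3194680}, \cite{MR2875838}) actually uses.
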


To prove the sufficiency of the hypergenerated property, we begin with a preliminary lemma inspired by \cite[Lemma 2.1]{MR2875838}.
\begin{lemma}\label{montivittone}
    Let $\G$ be a Carnot group. Let $E$ be a set of finite $\mathbb G$-perimeter in $ B_r(0) $ for some $r>0$. Let $Z$ be a horizontal left-invariant vector field such that 
    \begin{equation*}
        \int_E Z\psi(p)\,dp\leq 0,
    \end{equation*}
    for every $\psi\in C^1_c( B_r(0) )$ such that $\psi\geq 0$. Then
    \begin{equation*}
        \exp (sZ)(A)\subseteq B_r(0) \quad\Rightarrow\quad \mathcal L^{n}(E\cap A)\leq\mathcal L ^{n}(E\cap \exp(sZ)(A))
    \end{equation*}
    for every $s>0$ and for every $\mathcal L^{n}$-measurable set $A\subseteq  B_r(0) $.
\end{lemma}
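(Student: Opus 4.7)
The plan is to transfer the pointwise hypothesis to a monotonicity property of an integral along the flow generated by $Z$. Since $Z$ is left-invariant on $\G$, its flow is right-translation: $\Phi_t(p)=p\cdot\exp(tZ)$. Moreover, $\mathcal L^n$ is a Haar measure on the connected nilpotent group $\G$, hence bi-invariant by unimodularity; in particular right translations preserve $\mathcal L^n$, so replacing $\chi_A$ by a smooth approximant and then changing variables at the end of the argument is benign. I would first establish the monotonicity at the level of smooth non-negative test functions, and only at the very end approximate $\chi_A$.

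Concretely, for a non-negative $\psi\in C_c^1(B_r(0))$, define $\psi_t(p)\coloneq \psi(p\cdot\exp(-tZ))$, which is again non-negative and $C^1$, with $\supp(\psi_t)=(\supp\psi)\cdot\exp(tZ)$. Using the definition of $Z$ as the left-invariant derivation $Zf(p)=\tfrac{d}{du}f(p\cdot\exp(uZ))|_{u=0}$, a direct computation yields the transport identity $\tfrac{d}{dt}\psi_t=-Z\psi_t$. Granting for the moment that $\psi_t\in C_c^1(B_r(0))$ for every $t\in[0,s]$, I may apply the hypothesis to the non-negative test function $\psi_t$ to get
\begin{equation*}
\tfrac{d}{dt}\int_E\psi_t\,dp \;=\; -\int_E Z\psi_t\,dp \;\geq\; 0,
\end{equation*}
so that $t\mapsto\int_E\psi_t\,dp$ is non-decreasing on $[0,s]$. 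Comparing $t=0$ and $t=s$, performing the change of variables $q=p\cdot\exp(-sZ)$ on the right, and using the invariance of $\mathcal L^n$, I obtain $\int_E\psi\,dp \leq \int_{E\cdot\exp(-sZ)}\psi\,dq$. Approximating $\chi_A$ from below by an increasing sequence of non-negative functions $\psi_n\in C_c^1(B_r(0))$ (say, mollifications of indicators of compact sets $K_n\Subset A$) and applying monotone convergence yields
\begin{equation*}
|E\cap A| \;\leq\; |(E\cdot\exp(-sZ))\cap A| \;=\; |E\cap (A\cdot\exp(sZ))| \;=\; |E\cap \exp(sZ)(A)|,
\end{equation*}
where the penultimate equalities once more use right-invariance of $\mathcal L^n$.

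The main technical obstacle is ensuring the support condition $\supp(\psi_n)\cdot\exp(tZ)\subseteq B_r(0)$ for \emph{every} $t\in[0,s]$: the hypothesis $\exp(sZ)(A)\subseteq B_r(0)$ controls the orbit only at the endpoint $t=s$, whereas the horizontal curve $t\mapsto p\cdot\exp(tZ)$ could in principle exit $B_r(0)$ at intermediate times. I would handle this by an interior approximation, replacing $A$ by compact subsets $K_n\Subset A$ whose entire orbits $\bigcup_{t\in[0,s]}K_n\cdot\exp(tZ)$ are compactly contained in $B_r(0)$, taking $\psi_n$ to be mollifications of $\chi_{K_n}$ on a scale small enough that the slightly enlarged orbit still lies in $B_r(0)$, and finally letting $K_n\uparrow A$ via inner regularity of $\mathcal L^n$. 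This interior reduction is the only non-mechanical ingredient; once it is in place, the transport identity and the change of variables render the remainder of the argument automatic.
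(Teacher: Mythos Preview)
Your strategy --- the transport identity $\partial_t\psi_t=-Z\psi_t$, monotonicity of $t\mapsto\int_E\psi_t$, and passage to $\chi_A$ by approximation --- is exactly the mechanism behind the result the paper cites (Monti--Vittone, \cite[Lemma~2.1]{MR2875838}); the paper itself gives no independent argument. So at the level of approach there is nothing to compare.

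There is, however, a real gap in your handling of the intermediate-time issue. You propose to take compacts $K_n\Subset A$ with $\bigcup_{t\in[0,s]}K_n\cdot\exp(tZ)\Subset B_r(0)$ and then let $K_n\uparrow A$ by inner regularity. But inner regularity only produces compacts that exhaust $A$ in measure; it gives you no control on their orbits. If some $a\in A$ has $a\cdot\exp(t_0 Z)\notin B_r(0)$ for an intermediate $t_0\in(0,s)$ --- and nothing in the hypotheses rules this out, since CC balls in Carnot groups need not be convex along horizontal left-translates --- then no admissible $K_n$ can contain $a$, and your $K_n$ exhaust only the subset $A'=\{a\in A:\,a\cdot\exp(tZ)\in B_r(0)\text{ for all }t\in[0,s]\}$, not $A$ itself. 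Your argument thus proves $|E\cap A'|\le|E\cap A'\cdot\exp(sZ)|$, which is strictly weaker than the stated conclusion.

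Two honest ways out: either (i) establish that for horizontal left-invariant $Z$ the segment $\{p\cdot\exp(tZ):t\in[0,s]\}$ stays in $B_r(0)$ whenever both endpoints do (this is where the geometry of the specific ambient ball enters, and is not free), or (ii) observe that the downstream application in Proposition~\ref{analogadimonti3.6} only ever uses the lemma with $A$ a small neighbourhood of a point and $s$ small, for which the orbit condition is trivially satisfied, and weaken the lemma accordingly.
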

\begin{proof}
    It follows exactly as in the proof of \cite[Lemma 2.1]{MR2875838}; see also \cite{LeDonne-Bellettini2}.
\end{proof}
   \begin{proposition}\label{analogadimonti3.6}
   	Let $\G$ be a hypergenerated group. Let $E$ be a set of finite $\mathbb{G}$-perimeter in $B_r(q)$, with $q\in\partial E$ and $r>0$. If $E$ has locally constant normal at $q$ inside $B_r(q)$,
   	  then $\partial E$ is the vertical hyperplane orthogonal to $\nu$ inside $B_r(p)$.
   	\end{proposition}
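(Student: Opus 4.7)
The plan is to show that $E\cap B_r(q)$ is left-invariant under translations by every element of the vertical hyperplane $H := \exp(P\oplus [\g,\g])$, where $P := \set{Z\in V_1 : \langle Z,\nu\rangle = 0}$ is the codimension-$1$ subspace of $V_1$ orthogonal to $\nu$. Once that invariance is established, the condition $q\in\partial E$ together with the normalization \eqref{interiordensity} forces $\partial E\cap B_r(q)$ to be the single coset $qH\cap B_r(q)$, which is precisely the vertical hyperplane orthogonal to $\nu$ through $q$.

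First, I would establish horizontal invariance. For every left-invariant horizontal field $Z\in P$ and every $\psi\in C^1_c(B_r(q))$, the identity $\divv_\G(\psi Z)=Z\psi+\psi\,\divv_\G(Z)$ together with $\divv_\G(Z)=0$ (left-invariance of the Haar measure) and the hypothesis $\vg=\nu$ on $B_r(q)$ gives
\begin{equation*}
\int_E Z\psi\,dp \;=\; -\int_{B_r(q)}\psi\,\langle \nu,Z\rangle\,dP_\G(E,\cdot)\;=\;0.
\end{equation*}
Applying \cref{montivittone} to both $Z$ and $-Z$ yields $|E\cap A|=|E\cap \exp(sZ)(A)|$ for every measurable $A\subseteq B_r(q)$ with $\exp(sZ)(A)\subseteq B_r(q)$ and every $s\in\R$. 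Thus $E\cap B_r(q)$ is locally invariant under the flow of every $Z\in P$.

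Next, I would upgrade this invariance from $P$ to the full Lie subalgebra $\Lie(P)$. Since $\G$ is hypergenerated, \cref{hyper_step2} gives $[\g,\g]\subseteq\Lie(P)$, and hence $\Lie(P)=P\oplus[\g,\g]$ is exactly the Lie algebra of the vertical hyperplane orthogonal to $\nu$. The upgrade is carried out by an inductive commutator argument: if $E\cap B_r(q)$ is locally invariant under the flows of two left-invariant fields $Z_1,Z_2$, then it is also invariant under the flow of $[Z_1,Z_2]$. This follows from the standard commutator/Trotter identity
\begin{equation*}
\exp(s[Z_1,Z_2]) \;=\; \lim_{n\to\infty}\bigl(\exp(\tfrac{\sqrt{s}}{\sqrt n}Z_1)\,\exp(\tfrac{\sqrt{s}}{\sqrt n}Z_2)\,\exp(-\tfrac{\sqrt{s}}{\sqrt n}Z_1)\,\exp(-\tfrac{\sqrt{s}}{\sqrt n}Z_2)\bigr)^{n},
\end{equation*}
valid on the nilpotent group $\G$ since the BCH series terminates. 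Iterating this on brackets of increasing length inside $\Lie(P)$ gives the desired invariance under the flow of every $W\in P\oplus [\g,\g]$. A measurable-group/closedness argument then shows that the stabilizer of $E\cap B_r(q)$ in $\G$ contains $H=\exp(P\oplus[\g,\g])$, so $E\cap B_r(q)$ is a union of pieces of left cosets of $H$. Combined with $q\in\partial E$ and \eqref{interiordensity}, this forces $\partial E\cap B_r(q)=qH\cap B_r(q)$, completing the proof.

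The main technical obstacle is the commutator step: the translates $\exp(\tfrac{\sqrt s}{\sqrt n}Z_i)$ must all be applied inside the ball $B_r(q)$ for \cref{montivittone} to apply at every iterate, which requires a careful localization (for instance, by shrinking to $B_{r'}(q)$ with $r'<r$ and applying the invariance only to sets compactly contained in $B_{r'}(q)$, using that the word length of the $n$-fold commutator is of order $\sqrt{s}$ uniformly in $n$). The nilpotency of $\G$, which makes BCH polynomial, is what allows one to control this localization with uniform bounds as $n\to\infty$ and to close the induction on bracket length.
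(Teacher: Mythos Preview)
Your approach matches the paper's: test against $Z\in P:=\nu^\perp\cap V_1$, apply \cref{montivittone} to get local flow-invariance of $E$ along $P$, then upgrade to $\Lie(P)=P\oplus[\g,\g]$ using that $\G$ is hypergenerated. The paper defers the commutator/localization step to \cite[Proposition~3.6]{MR3194680}; you spell it out via the Trotter identity, and your localization remark (excursion of order $\sqrt s$ uniformly in $n$, thanks to the polynomial BCH formula in a nilpotent group) is the right observation.

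There is, however, one genuine omission. You never exploit the direction $\nu$ itself, and your final sentence --- that $H$-invariance together with $q\in\partial E$ and \eqref{interiordensity} forces $\partial E\cap B_r(q)=qH\cap B_r(q)$ --- is not justified as stated: an $H$-invariant finite-perimeter set can have several parallel boundary cosets inside $B_r(q)$ (already a slab in $\R^2$ shows this). What rules this out is precisely the part of the constant-normal hypothesis you have not yet used. The paper also tests with $\phi=\psi\nu$ for $\psi\ge 0$, obtaining
\[
\int_E \nu\psi\,d\mathcal L^n \;=\; -\int_{B_r(q)}\psi\,dP_\G(E,\cdot)\;\le\;0,
\]
and then \cref{montivittone} yields \emph{monotonicity} (not invariance) along the positive $\nu$-flow: if $p\in E\cap B_r(q)$ and $\exp(s\nu)(p)\in B_r(q)$ with $s>0$, then $\exp(s\nu)(p)\in E$. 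Combined with the $H$-invariance you establish, this forces $E\cap B_r(q)$ to lie on one side of a single coset of $H$, and the conclusion follows. Adding this one computation and one further application of \cref{montivittone} closes the gap.
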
  
\begin{proof}
    We follow the proof of \cite[Proposition 3.6]{MR3194680} (see also \cite{LeDonneKleinerAmbrosio}). Up to left-translations, we can assume that $q=0$.
    We set $\tilde P=\spann\{\nu\}^\perp\cap V_1$. Then, since $\mathbb G$ is hypergenerated, we know that $[\g,\g]\subseteq Lie(\tilde P)$. Let $\tilde X_1,\ldots,\tilde X_{m_1}$ be a basis of $\tilde V$ that generates $[\g,\g]$ by commutation. We identify the latter with their left-invariant extensions.  Fix $\psi\in C^1_c( B_r(0) )$, set $\phi:=\psi\nu$ and for each $j=1,\ldots,m-1$, set $\phi^{\pm}_j=\pm\psi \tilde X_j$. Then it holds that
    \begin{equation*}
    \begin{split}
       \int_E\nu\psi\,d\mathcal L^n&=   \int_E\diver_{\mathbb G}\phi\,d\mathcal L^n\\
       &=-\int_{\mathbb G}\langle\phi,\vg\rangle\,dP_\G(E,\cdot)\,\\
       &=-\int_{\mathbb G}\langle\phi,\nu\rangle\,dP_\G(E,\cdot)\,\\
       &=-\int_{\mathbb G}\psi\,dP_\G(E,\cdot)\\
       &\leq 0,
    \end{split}
    \end{equation*}
    and
        \begin{equation*}
        \begin{split}
             \pm \int_E\tilde X_j\psi\,d\mathcal L^n= \int_E\diver_{\mathbb G}\phi^\pm_j\,d\leb ^n
             =-\int_{\mathbb G}\langle\phi^\pm_j\vg\rangle\,dP_\G(E,\cdot)\,
             =-\int_{\mathbb G}\langle\phi^\pm_j,\nu\rangle\,dP_\G(E,\cdot)
             =0
        \end{split}    
    \end{equation*}
    for every $j=1,\dots,m-1$. Therefore, arguing as in the proof of \cite[Proposition 3.6]{MR3194680} and thanks to \Cref{montivittone}, we conclude that 
    \begin{equation*}
        p\in E\cap B_r(0) \text{ and }\exp (\pm s \tilde X_j)(p)\in B_r(0) \quad\Rightarrow\quad\exp (\pm s\tilde X_j)(p)\in E
    \end{equation*}
    and
    \begin{equation*}
        p\in E\cap B_r(0) \text{ and }\exp (s \nu)(p)\in B_r(0) \quad\Rightarrow\quad\exp (s\nu)(p)\in E
    \end{equation*}
    for every $s>0$ and every $j=1,\ldots,m-1$. The thesis follows as in the proof of \cite[Proposition 3.6]{MR3194680}.
\end{proof}
When, instead, the Carnot group $\G$ is not hypergenerated, it is possible to exhibit smooth counterexamples. Since our construction applies in the same way to provide counterexamples in $k$-hypergenerated groups of arbitrary order, we give here a unified statement. If $S\subseteq \G$ is a $k$-codimensional embedded surface and $k>1$, we can no longer speak about constant horizontal normal. Nevertheless, the latter property can be rephrased looking at the horizontal tangent space. Indeed, if $S\subseteq\G$ is a smooth, embedded, non-characteristic hypersurface and if we denote by $\v$ its horizontal unit normal, then $\v$ is constant if and only if there exists a $1$ codimensional subspace $P\subseteq V_1$ such that
\begin{equation}\label{costanttangentforhypers}
    \otc{p}=\mathrm{d} L_p P,\qquad \text{for every $p\in S$}.
\end{equation}
Let now $S\subseteq \G$ be a smooth, embedded, non-characteristic surface of codimension $1\leq k\leq m-2$. Notice that, since $S$ is non-characteristic, then
\begin{equation*}
    \dim (\otc{p})=m-k,\qquad \text{for every $p\in S$.}
\end{equation*}
 According to \eqref{costanttangentforhypers}, we say that $S$ has \emph{locally constant horizontal tangent space} near $p\in S$ if there exists $r>0$ and a $k$-codimensional subspace $P\subseteq V_1$ such that
\begin{equation}\label{costanttangentforsurf}
    \otc{q}=\mathrm{d} L_qP,\qquad \text{for every $q\in B_r(p)$.}
\end{equation}
 
\begin{proposition}\label{counterexchnnothpcodimalta}
    Let $\G$ be a Carnot group. Let $1\leq k\leq m-2$. Assume that $\G$ is not $k$-hypergenerated. Then there exists a smooth, embedded, non-characteristic $k$-codimensional hypersurface $S$, $p\in S$, and $r>0$ such that $S$ has locally constant horizontal tangent space in $B_r(p)$ but $S\cap B_\varrho(p)$ is not a vertical $k$-codimensional plane for every $0<\varrho\leq r$. 
\end{proposition}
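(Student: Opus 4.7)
The plan is to build $S$ as a union of left cosets of the Lie subgroup $H_P \coloneq \exp(\Lie(P))$, where $P \subseteq V_1$ is a codimension-$k$ subspace witnessing the failure of being hypergenerated of order $k$. Since the inclusion $\Lie(P) \subseteq P \oplus [\g,\g]$ always holds, the hypothesis that $\G$ is not $k$-hypergenerated is equivalent to the existence of such a $P$ with $\Lie(P) \subsetneq P \oplus [\g,\g]$, equivalently $\dim H_P < n-k$. This strict inequality provides the ``extra room'' I will exploit in order to bend the coset-foliated surface $S$ off the vertical $k$-codimensional plane $H \coloneq \exp(P \oplus [\g,\g])$ along a direction lying in $[\g,\g]$ but outside $\Lie(P)$.

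Concretely, I would first fix a stratified complement $\widetilde V = \widetilde V_1 \oplus \widetilde V_2 \oplus \cdots \oplus \widetilde V_s$ of $\Lie(P)$ in $\g$, with each $\widetilde V_j$ a complement of $V_j \cap \Lie(P)$ inside $V_j$; so $\widetilde V_1 \subseteq V_1$ is a $k$-dimensional complement of $P$, and $U \coloneq \widetilde V_2 \oplus \cdots \oplus \widetilde V_s \subseteq [\g,\g]$ satisfies $\dim U = \dim \widetilde V - k > 0$ by the choice of $P$. Next I would pick a smooth map $\psi \colon U \to \widetilde V_1$ with $\psi(0)=0$, $D\psi(0)=0$, and $\psi \not\equiv 0$ on every neighborhood of the origin (e.g. $\psi(u) = \lVert u \rVert^2 w_0$ for a fixed $w_0 \in \widetilde V_1 \setminus \{0\}$), and set $\Sigma \coloneq \{u + \psi(u) : u \in U\} \subseteq \widetilde V$. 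The map $\Psi \colon \widetilde V \times H_P \to \G$, $\Psi(v,h) \coloneq \exp(v) \cdot h$, is a local diffeomorphism at $(0,\id)$ because its differential there realizes the splitting $\g = \widetilde V \oplus \Lie(P)$; I then define $S \coloneq \Psi(\Sigma \times H_P)$ in a small neighborhood of $\id$, obtaining a smooth embedded submanifold of dimension $\dim U + \dim H_P = n-k$.

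By construction, each leaf $\exp(\sigma) H_P \subseteq S$ has tangent space $(\mathrm{d}L_p)\Lie(P) \supseteq (\mathrm{d}L_p)P$ at every point $p$, so a dimension count combined with non-characteristicity forces $\otc{p} = (\mathrm{d}L_p)P$, giving the locally constant horizontal tangent space. Non-characteristicity at $\id$ is immediate:
\begin{equation*}
T_\id S + V_1 = (U + \Lie(P)) + V_1 = U + \widetilde V_1 + \Lie(P) = \widetilde V + \Lie(P) = \g,
\end{equation*}
and extends to a neighborhood of $\id$ by continuity. Finally, if $S \cap B_\varrho(\id)$ coincided with some vertical $k$-codimensional plane $\exp(Q \oplus [\g,\g])$, then matching tangent spaces at $\id$ would give $Q \oplus [\g,\g] = U + \Lie(P) = P \oplus [\g,\g]$, forcing $Q = P$ and the plane to be $H$; but then $\exp(u + \psi(u)) \in S \cap H$ would require $u + \psi(u) \in P \oplus [\g,\g]$, i.e., $\psi(u) \in P \cap \widetilde V_1 = \{0\}$, contradicting $\psi \not\equiv 0$ in every neighborhood of $0$.

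The main obstacle is conceptual rather than computational: one must convert the algebraic failure $\Lie(P) \subsetneq P \oplus [\g,\g]$ into a genuine transverse bending direction living in $[\g,\g] \setminus \Lie(P)$, and then check that the stratified choice of $\widetilde V$ keeps $U$ inside $[\g,\g]$ so that the graph perturbation $\psi$ is harmless for the horizontal tangent (its values lie in $\widetilde V_1$, which is disjoint from $P$) while being lethal for the comparison with any vertical plane.
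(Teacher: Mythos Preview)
Your proposal is correct and follows essentially the same approach as the paper: both build $S$ as a union of left cosets of $H_P=\exp(\Lie(P))$, parametrized by a curved graph in the complementary directions $\widetilde V_1$ (a complement of $P$ in $V_1$) and $U$ (a complement of $\Lie(P)\cap[\g,\g]$ in $[\g,\g]$). The paper writes the coset representatives as a product $\exp(\varphi(\bar w))\cdot\exp(\bar w)$ in exponential coordinates, while you package them as a single exponential $\exp(u+\psi(u))$; the verification that the leaves carry the constant horizontal tangent $(\mathrm d L_p)P$ and that $S$ is non-characteristic at the identity is identical in substance. Your argument that $S$ cannot coincide with any vertical $k$-plane near $\id$ (forcing $Q=P$ by matching first-layer tangents, then deriving $\psi(u)\in P\cap\widetilde V_1=\{0\}$) is in fact more explicit than the paper's, which simply asserts that $E_\varphi$ is ``not flat'' by the choice of $\varphi$.
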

\begin{proof}
    Let $n$ be the dimension of the Lie algebra $\mathfrak g$ of $\mathbb G$. Assume that $\G$ is not $k$-hypergenerated. Then there exists a $k$-codimensional subspace $P\subseteq V_1$ such that $[\g,\g]\not\subseteq\Lie(P)$. We fix a basis $$(X_1,\ldots,X_k,Y_1\ldots,Y_{m-k})$$ of $V_1$ in such a way that 
    \begin{equation*}
        P=\spann\{Y_1,\ldots,Y_{m-k}\}\qquad\text{and}\qquad V_1=\spann\{X_1,\ldots,X_k\}\oplus P.
    \end{equation*} Since $[\g,\g]\not\subseteq\Lie(P)$, there exists a non-empty subspace $W\subseteq [\g,\g]$ such that $$\g=\spann\{X_1,\ldots,X_k\}\oplus \Lie (P)\oplus W.$$ We extend $(X_1,\ldots,X_k,Y_1\ldots,Y_{m-k})$ to a basis $(X_1,\ldots,X_k,,Y_1\ldots,Y_{\alpha},W_1,\ldots,W_\beta)$ of $\mathfrak g$, for suitable $\alpha,\beta\in\mathbb N\setminus\{0\},$ in such a way that $\Lie(P)=\spann\{Y_1,\ldots,Y_\alpha\}$ and $W=\spann\{W_1,\ldots,W_\beta\}$. We identify elements of $\mathfrak g$ with their left-invariant extension. 
    We identify $\G$ with $\mathbb R^n$ via exponential coordinates. More precisely, if $$v=\sum_{j=1}^kx_j X_j+\sum_{j=1}^\alpha y_j Y_j+\sum_{j=1}^\beta w_j W_j\in\g,$$ then we identify $v$ with the point $p=(x_1,\ldots,x_k,y_1,\ldots,y_\alpha,w_1,\ldots,w_\beta)\in\rr^n$. Let $$\varphi_1,\ldots,\varphi_k:\rr^\beta\to \rr$$ be any smooth functions such that, for every $j=1,\ldots,k$,  $\varphi_j(\bar 0)=0$ and 
  $D\varphi_j(\bar w)=\bar 0$ if and only if $\bar w=0$ (for instance, $\varphi_j(\bar w)=|\bar w|^2$ for every $j=1,\ldots, k$). Consider the set 
    \begin{equation*}
        E_\varphi=\left\{(\varphi_1(\bar w),\ldots,\varphi_k(\bar w),\bar 0,\bar 0)\cdot(0,\bar 0,\bar w)\cdot (0,\bar y,\bar 0)\,:\,\bar y\in\rr^\alpha,\,\bar w\in\rr^\beta\right\}.
    \end{equation*}
    Notice that $E_\varphi$ is a smooth, embedded $k$-codimensional surface such that $0\in S$. Moreover, by our choice of $\varphi$, $E_\varphi$ is not flat in every neighborhood of $0$. We claim that $Y_j|_p\in T_p S$ for every $i=1,\ldots, \alpha$ and every $p\in S$. Indeed, fix $p\in S$ and define the curve $\gamma (t)=p\cdot\exp (t Y_i)$ for $t\in\rr$. Notice that $\gamma(0)=p$ and $\Dot\gamma(0)=Y_i|_p$. Moreover, notice that
    \begin{equation*}
    \begin{split}
          \gamma(t)&=p\cdot\exp(tY_i)\\
          &=(\varphi_1(\bar w),\ldots,\varphi_k(\bar w),\bar 0,\bar 0)\cdot(0,\bar 0,\bar w)\cdot (0,\bar y,\bar 0)\cdot\exp(tY_i)\\
          &=(\varphi_1(\bar w),\ldots,\varphi_k(\bar w),\bar 0,\bar 0)\cdot(0,\bar 0,\bar w)\cdot\exp\left(\sum_{j=1}^\alpha y_j Y_j\right)\cdot\exp(tY_i)\\
          &=(\varphi_1(\bar w),\ldots,\varphi_k(\bar w),\bar 0,\bar 0)\cdot(0,\bar 0,\bar w)\cdot\exp\left(\left(\sum_{j=1}^\alpha y_j Y_j\right)\star \left(tY_i\right)\right),
    \end{split}
    \end{equation*}
    where we denoted by $\star$ the Dynkin on $\g$ using the Campbell-Hausdorff formula (cf. \cite[Section 4.7.3]{Enrico2025book}). In particular, since $\star\left(\Lie(P)\times\Lie(P)\right)\subseteq \Lie (P)$, we conclude that $\gamma(t)\in E_\varphi$, so that $Y_i|_p\in T_p S$. We claim that there exists $r>0$ such that $S\cap  B_r(0) $ is non-characteristic. Being $S$ smooth, it suffices to show that $0$ is a non-characteristic point, or equivalently that $X_1|_0,\ldots, X_k|_0\notin T_0 S$. Let $\phi:\rr^\beta\times\rr^\alpha\longrightarrow\rr^n$ be defined by
    \begin{equation*}
        \phi(\bar w,\bar y)=(\varphi_1(\bar w),\ldots,\varphi_k(\bar w),\bar 0,\bar 0)\cdot(0,\bar 0,\bar w)\cdot (0,\bar y,\bar 0),\qquad\text{for  $\bar w\in\rr^\beta$ and $\bar y\in\rr^\alpha$.}
    \end{equation*}
    Then a simple computation shows that
    \begin{equation*}
        \frac{\partial\phi}{\partial w_i}\Big |_{(\bar 0,\bar 0)}=W_i|_0\qquad\text{and}\qquad\frac{\partial\phi}{\partial y_i}\Big |_{(\bar 0,\bar 0)}=Y_j|_0,
    \end{equation*}
    for every $i=1,\ldots,\beta$ and every $j=1,\ldots,\alpha$. In particular, $X_1|_0,\ldots, X_k|_0\notin T_0 S$, so that there exists $r>0$ such that $S\cap  B_r(0) $ is non-characteristic. Since we already know that $Y_1|_p,\ldots,Y_{m-k}|_p\in T_p S$ for every $p\in S$, we conclude that
    \begin{equation*}
        \otc{p}=\spann\{Y_1|_p,\ldots, Y_{m-k}|_p\}
    \end{equation*}
    for every $p\in S\cap  B_r(0) $, so that $S$ has locally constant horizontal tangent space in $B_r(0)$. Nevertheless, our choice of $\varphi$ implies that $S\cap B_\varrho(0)$ is not a vertical $k$-codimensional plane for every $0<\varrho\leq r$.
\end{proof}
\begin{proof}[Proof of \Cref{hgiffchnimplhp}]
    The theorem follows combining \Cref{analogadimonti3.6} and \Cref{counterexchnnothpcodimalta}.
\end{proof}

In the smooth setting, \Cref{analogadimonti3.6} generalizes to higher codimension as follows.
\begin{proposition}\label{sufficiencysec3highcodim}
   Let $1\leq k\leq m-2$. Let $\G$ be a $k$-hypergenerated group. Let $S$ be a smooth, embedded, non-characteristic surface of codimension $k$. Let $p\in S$ and $r>0$ be such that $S$ has locally constant horizontal tangent space in $B_r(p)$. Then $S\cap B_r(p)$ is a vertical $k$-codimensional plane. 
\end{proposition}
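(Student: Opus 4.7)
The plan is to use the $k$-hypergenerated assumption to upgrade the constant horizontal tangent space condition to a full description of $T_qS$, and then apply Frobenius' theorem to conclude that $S\cap B_r(p)$ is a left-coset of a suitable Lie subgroup.

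Let $P\subseteq V_1$ be the $k$-codimensional subspace from \eqref{costanttangentforsurf}. Since $\G$ is $k$-hypergenerated, $[\g,\g]\subseteq\Lie(P)$, and therefore $\Lie(P)=P\oplus[\g,\g]$ is a Lie subalgebra of $\g$ of codimension $k$. Let $H\coloneq\exp(\Lie(P))$ be the corresponding vertical $k$-codimensional space (cf. \eqref{verticalplanesdefeq}), and let $\mathcal{D}$ be the left-invariant distribution on $\G$ defined by $\mathcal{D}_q\coloneq\mathrm{d}L_q\Lie(P)$. Being associated to a Lie subalgebra, $\mathcal{D}$ is involutive, and its connected integral manifolds are the connected components of the left-cosets of $H$.

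The main step is to prove that $T_qS=\mathcal{D}_q$ for every $q\in S\cap B_r(p)$. Pick a basis $Y_1,\dots,Y_{m-k}$ of $P$ and view each $Y_j$ as a left-invariant vector field on $\G$. By \eqref{costanttangentforsurf}, each $Y_j$ is tangent to $S$ at every point of $S\cap B_r(p)$. Since tangency to a smooth submanifold is preserved by the Lie bracket of ambient vector fields, every iterated bracket of $Y_1,\dots,Y_{m-k}$ is again tangent to $S$ throughout $S\cap B_r(p)$. These iterated brackets are left-invariant vector fields whose values at the identity span $\Lie(P)$ by definition, hence their values at $q$ span $\mathcal{D}_q$. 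Therefore $\mathcal{D}_q\subseteq T_qS$, and equality follows by a dimension count, since $\dim\mathcal{D}_q=n-k=\dim T_qS$.

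Finally, Frobenius' theorem gives that $S$ and $p\cdot H$, being integral manifolds of the involutive distribution $\mathcal{D}$ through $p$, agree in some neighborhood of $p$. A connectedness argument — the set of $q\in S\cap B_r(p)$ for which $S$ locally coincides with $p\cdot H$ is non-empty, open, and closed in $S\cap B_r(p)$ — then yields $S\cap B_r(p)=(p\cdot H)\cap B_r(p)$, which is a left-translate of the vertical $k$-codimensional space $H$. The main delicate point is precisely this last propagation step, since a priori $S\cap B_r(p)$ need not be connected, and one has to exclude the possibility that distinct connected components lie on different cosets of $H$.
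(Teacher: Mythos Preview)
Your proof is correct and follows essentially the same route as the paper: both show $\mathrm{d}L_q\Lie(P)=T_qS$ on $S\cap B_r(p)$ via closure of tangency under Lie brackets of ambient vector fields, then identify $S$ locally with a coset of $H=\exp(\Lie(P))$ --- you cite Frobenius, while the paper flows along a single left-invariant $Z\in\Lie(P)$ to reach each nearby point of $q\cdot H$, which amounts to the same thing. The connectedness caveat you raise is legitimate but is equally unaddressed in the paper's own argument, so the conclusion is best read as local near each point of $S\cap B_r(p)$.
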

\begin{proof}
    Let $S,p$ and $r$ be as in the statement. By hypothesis, there exists a $k$-codimensional subspace $P\subseteq V_1$ such that \eqref{costanttangentforsurf} holds for every $q\in B_r(p)$. Let us define
    \begin{equation*}
        H:=\exp\left(P\oplus [\g,\g]\right).
    \end{equation*}
    According to \eqref{verticalplanesdefeq},the space $H$ is a vertical $k$-codimensional plane. Since $\G$ is $k$-hypergenerated, then
    \begin{equation}\label{pianiinhypergen}
        H=\exp (\Lie(P)).
    \end{equation}
    Fix $q\in S\cap B_r(p)$. To conclude the proof, it suffices to show that there exists $\varrho>0$ such that
    \begin{equation*}
        q\cdot H\cap B_\varrho (q)\subseteq S.
    \end{equation*}
    Up to left-translating $S$, we may assume that $q=0$. In view of \eqref{pianiinhypergen}, for a fixed $\tilde q\in H$ there exists $Z\in\Lie (P)$ such that
    \begin{equation}\label{tildequgualeexp}
        \tilde q=\exp (Z).
    \end{equation}
Let us identify $Z$ with its left-invariant extension. Since 
$d\mathrm L_{\tilde p}P\subseteq T_{\tilde p}S$ 
for every $\tilde p\in S\cap B_r(p)$, then $d\mathrm L_{\tilde p}\lie (P)\subseteq T_{\tilde p}S$
for every $\tilde p\in S\cap B_r(p)$, whence
$Z_{\tilde p}\in T_{\tilde p}S$
for every $\tilde p\in S\cap B_r(p)$. Thus the integral curve of $Z$ belongs to $S$. Therefore, by \eqref{tildequgualeexp} and choosing $\varrho>0$ small enough, we conclude that $\tilde q\in S$, whence the thesis follows.
\end{proof}
Ultimately, combining \Cref{counterexchnnothpcodimalta} and \Cref{sufficiencysec3highcodim}, we obtain the following characterization in arbitrary codimension. \begin{theorem}
    Let $\G$ be a Carnot group of rank $m$. Assume that $1\leq k\leq m-2.$ Then $\G$ is $k$-hypergenerated if and only if every $k$-codimensional surface with locally constant horizontal tangent space is locally a vertical $k$-codimensional plane.
\end{theorem}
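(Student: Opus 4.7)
The plan is to deduce the theorem as a direct combination of the two propositions already established in this section, namely \Cref{sufficiencysec3highcodim} (for the sufficiency of the $k$-hypergenerated property) and \Cref{counterexchnnothpcodimalta} (for the necessity, via a contrapositive construction). Since both results are already in place, the argument reduces to unwinding the logical equivalence and matching quantifiers.

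For the \emph{if} direction, I would proceed by contrapositive. Suppose that $\G$ is not $k$-hypergenerated. By \Cref{counterexchnnothpcodimalta}, there exists a smooth, embedded, non-characteristic $k$-codimensional surface $S$ together with a point $p\in S$ and a radius $r>0$ such that $S$ has locally constant horizontal tangent space inside $B_r(p)$, yet $S\cap B_\varrho(p)$ fails to be a vertical $k$-codimensional plane for every $0<\varrho\leq r$. This directly contradicts the assumption that every $k$-codimensional surface with locally constant horizontal tangent space is locally a vertical $k$-codimensional plane, closing this half of the argument.

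For the \emph{only if} direction, I would assume $\G$ is $k$-hypergenerated and fix any smooth, embedded, non-characteristic $k$-codimensional surface $S\subseteq\G$ with locally constant horizontal tangent space near a point $p\in S$; by definition \eqref{costanttangentforsurf}, there exists $r>0$ and a $k$-codimensional subspace $P\subseteq V_1$ controlling $\otc{q}$ for every $q\in B_r(p)\cap S$. Applying \Cref{sufficiencysec3highcodim} in the neighborhood $B_r(p)$ shows that $S\cap B_r(p)$ coincides with a vertical $k$-codimensional plane, namely $p\cdot\exp(\Lie(P))$, which is exactly the statement that $S$ is locally a vertical $k$-codimensional plane.

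The hardest step in principle is verifying that every point of $S$ admits such a neighborhood for which both propositions apply simultaneously, but since the \emph{locally constant horizontal tangent space} condition is stated pointwise with its own radius and the hypergenerated property enters only through the algebraic identity $\Lie(P) \supseteq [\g,\g] \cap$ (stratum), there is no genuine obstruction; the real work has already been done in constructing the explicit non-flat surface in the proof of \Cref{counterexchnnothpcodimalta} and in the flow-box argument of \Cref{sufficiencysec3highcodim}. In particular, no new analytic machinery is needed beyond the two propositions and the definition of vertical $k$-codimensional plane in \eqref{verticalplanesdefeq}.
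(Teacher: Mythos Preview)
Your proposal is correct and matches the paper's own approach exactly: the theorem is obtained by combining \Cref{counterexchnnothpcodimalta} for the necessity (via contrapositive) and \Cref{sufficiencysec3highcodim} for the sufficiency. The paper states this in a single sentence, while you have merely spelled out the quantifier matching; no additional content is needed.
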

\section{Non-characteristic surfaces}\label{sectionnonchar}

In this section we conclude the proof of \Cref{thm:main-theorem}. 
Indeed, we prove the following statement.
 \begin{theorem}
\label{thm:hypersurfaces}
 
     Let $\G$ be a Carnot group. The following are equivalent.
     \begin{enumerate}
         \item $\G$ is $k$-hypergenerated.
         \item On all embedded non-characteristic surfaces $S\subseteq \G$ of codimension at least $k$, the intrinsic distance and the restricted distance (as in \Cref{def:distances}) are locally bi-Lipschitz equivalent.
     \end{enumerate}
   
 \end{theorem}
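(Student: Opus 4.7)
The proof splits into two directions. For $(2) \Rightarrow (1)$ I would argue the contrapositive, using the counterexample construction in \Cref{counterexchnnothpcodimalta}. For $(1) \Rightarrow (2)$ I would combine the algebraic content of the hypergenerated hypothesis with the weak tangent machinery outlined in the introduction.

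For $(2) \Rightarrow (1)$: assume $\G$ is not $k$-hypergenerated, so some subspace $P \subseteq V_1$ with $\dim(P) = m-k$ satisfies $[\g,\g] \not\subseteq \Lie(P)$. Applying \Cref{counterexchnnothpcodimalta}, I would produce a smooth embedded non-characteristic surface $S \subseteq \G$ of codimension $k$ whose horizontal tangent space at every point $q$ near some $p \in S$ equals $\mathrm{d}L_q P$. The structural feature of this construction is that the left-invariant extensions of vectors in $P$ remain tangent to $S$; hence the iterated Lie brackets of sections of $\otd$ at $q$ span only the proper subspace $\mathrm{d}L_q \Lie(P) \subsetneq T_qS$. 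Consequently $\otd$ is not bracket-generating on $S$, and Chow--Rashevskii (in its negative form) produces points $q_1, q_2 \in S$ with $d_r(q_1, q_2)$ arbitrarily small but $d_i(q_1, q_2) = +\infty$, which rules out any local bi-Lipschitz equivalence.

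For $(1) \Rightarrow (2)$: fix a non-characteristic $S \subseteq \G$ of codimension $k$ and $p \in S$, and set $P_p := \mathrm{d}L_p^{-1}((\otd)_p) \subseteq V_1$. The $k$-hypergenerated hypothesis gives $\Lie(P_p) \supseteq [\g,\g]$. Expanding a local frame $Y_1,\ldots,Y_{m-k}$ of $\otd$ in a left-invariant frame of $\Delta$ and computing iterated brackets modulo $\Delta_p$ shows that such brackets at $p$ correspond, via $\mathrm{d}L_p^{-1}$, to all of $[\g,\g]$. Hence $\otd$ is bracket-generating on $S$ near $p$, with layer dimensions inherited from the ambient stratification, so $(S, \otd)$ is equiregular and $d_i$ is locally finite. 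The inclusion $\iota \colon (S, d_i) \hookrightarrow (\G, d_{cc})$ is trivially $1$-Lipschitz, so only the reverse inequality $d_i \leq C\, d_r$ requires work. I would apply the weak tangent analysis developed earlier in the paper: the weak tangent of $(S, d_i)$ at $p$ is a Carnot group $\widehat{T_pS}$ whose Lie algebra is $\Lie(P_p)$ with the induced stratification, and the weak tangent of the contact map $\iota$ at $p$ is, by the Margulis--Mostow-type differentiability theorem established in the paper, a Lie group homomorphism $\widehat{\iota}_p \colon \widehat{T_pS} \to \G$ whose image is the vertical subgroup $\exp(\Lie(P_p))$. By $k$-hypergeneratedness this subgroup has the same topological and homogeneous dimension as $\widehat{T_pS}$, so $\widehat{\iota}_p$ is a Carnot group isomorphism onto its image, hence bi-Lipschitz with uniform constant on bounded sets. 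A standard blow-up and compactness argument then upgrades this infinitesimal bi-Lipschitz property into the sought local bi-Lipschitz estimate $d_i \leq C\, d_r$ in a neighborhood of $p$ in $S$.

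The main obstacle is the weak tangent step in $(1) \Rightarrow (2)$: proving that the weak tangent of $\iota$ at every $p$ exists, is unique, and realizes the Carnot homomorphism onto $\exp(\Lie(P_p))$ described above. This is the technical core of the paper, parallel to \cite{gioacchino-sebastiano-enrico} and \cite{Margulis-Mostow}. A more pedestrian alternative would bypass weak tangents via the Ball-Box theorem on $S$: one builds a stratified basis of $\otd$ near $p$ and uses $\Lie(P_p) \supseteq [\g,\g]$ to match its weights to the ambient Carnot weights on $T_pS$, and then compares the two resulting weighted quasi-norms in exponential coordinates to recover the bi-Lipschitz equivalence directly. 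Either way, the implication $(1) \Rightarrow (2)$ hinges on the fact that hypergeneratedness forces the Lie algebra spanned by $\otd$ at each point to have the \emph{same} growth vector as the ambient stratification restricted to $T_pS$.
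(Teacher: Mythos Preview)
Your proposal is correct and follows essentially the same route as the paper. Two minor points of contrast: for $(2)\Rightarrow(1)$ the paper uses the simpler vertical plane of \Cref{prop:non-hyper-gen-hyperplane} rather than the curved surface of \Cref{counterexchnnothpcodimalta} (both give a non-characteristic surface on which $\otd$ fails to be bracket-generating, so $d_i=+\infty$ somewhere); and for $(1)\Rightarrow(2)$ the paper notes that the weak tangent $P_{f,p}$ of the inclusion is in fact an \emph{isometric} embedding---because at the base point the frame $Y_i$ of $\otd$ coincides with the orthonormal left-invariant frame $X_i$, see \eqref{eq:form_Y_i-first} and \eqref{eq:conto_Pansu}---which gives $\rho=1$ uniformly in $p$ in \Cref{lem:weak_tangent_bilip} and sidesteps the need to verify that your bi-Lipschitz constant for $\widehat{\iota}_p$ is uniform across points.
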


The implication (2)$\Rightarrow$(1) in \cref{thm:hypersurfaces} is a straightforward consequence of the following proposition.

\begin{proposition}
\label{prop:non-hyper-gen-hyperplane}
     Let $\G$ be a Carnot group. The following are equivalent.
    \begin{itemize}
        \item $\G$ is $k$-hypergenerated.
        \item All vertical planes $H\subseteq \G$ of codimension $k$ are Carnot subgroups of $\G$.
    \end{itemize}
    In particular, if $\G$ is not $k$-hypergenerated, there exists a vertical $k$-codimensional plane $H\subseteq\G$ whose intrinsic distance is not finite.
\end{proposition}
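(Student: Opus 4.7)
The plan is to reduce the geometric statement about $H$ being a Carnot subgroup to the purely algebraic condition $[\g,\g]\subseteq\Lie(P)$, which is exactly the defining property of $k$-hypergeneration from Definition~\ref{def:plentiful}. Fix a codimension-$k$ subspace $P\subseteq V_1$ and set $\mathfrak h := P\oplus[\g,\g]$, $H := \exp(\mathfrak h)$. Since $[\g,\g]$ is an ideal of $\g$, $\mathfrak h$ is automatically a Lie subalgebra, so $H$ is a closed connected Lie subgroup of $\G$. Because $V_1\cap[\g,\g]=\{0\}$ from the stratification, the only candidate first layer of $\mathfrak h$ compatible with the stratification of $\g$ is $\mathfrak h\cap V_1 = P$. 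Thus $H$ is a Carnot subgroup if and only if $P$ Lie-generates $\mathfrak h$; since trivially $\Lie(P)\subseteq\mathfrak h$, this is equivalent to $[\g,\g]\subseteq\Lie(P)$. Quantifying over all codimension-$k$ subspaces $P\subseteq V_1$ gives the stated equivalence.

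For the ``in particular'' clause, assuming $\G$ is not $k$-hypergenerated I would pick $P$ with $[\g,\g]\not\subseteq\Lie(P)$ and consider $H:=\exp(P\oplus[\g,\g])$. The plane $H$ is non-characteristic because $T_{\id}H + V_1 = \g$ and both sides are left-invariant, and the horizontal tangent distribution is $(\Delta_S)_p = \mathrm{d}L_p(P)$ for every $p\in H$. Any absolutely continuous curve $\gamma\colon[0,1]\to H$ with $\dot\gamma(t)\in(\Delta_S)_{\gamma(t)}$ translates, via $c(t):=\gamma(0)^{-1}\cdot\gamma(t)$, to a horizontal curve in $\G$ starting at $\id$ whose velocity lies in the left-invariant distribution spanned by $P$. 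By the orbit theorem, such curves are confined to the closed connected subgroup $K := \exp(\Lie(P))$, so $\gamma(1)\in\gamma(0)\cdot K$. Since $\Lie(P)\subsetneq\mathfrak h$ and $\exp$ is a global diffeomorphism on $\g$, I can pick $p,q\in H$ with $p^{-1}q\in H\setminus K$; for such points no admissible curve exists and $d_i(p,q)=+\infty$.

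The only mildly delicate step will be the confinement of horizontal orbits to $K = \exp(\Lie(P))$: the inclusion ``$\subseteq$'' relies on the fact that the left-invariant extensions of vectors in $P$ are tangent to $K$ (which follows because $P\subseteq\Lie(P)$ and $K$ is the integral subgroup corresponding to $\Lie(P)$ inside the nilpotent simply connected $\G$). Everything else is a direct matching of the definitions of \emph{hypergenerated of order $k$}, \emph{vertical plane}, and \emph{intrinsic distance}, so I do not expect further substantial obstacles.
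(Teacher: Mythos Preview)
Your proposal is correct and follows essentially the same route as the paper: both reduce the equivalence to the algebraic condition $\Lie(P)=P\oplus[\g,\g]$, and for the ``in particular'' clause both observe that the horizontal tangent distribution on $H$ is the left-invariant extension of $P$, whose Lie hull is strictly smaller than $T H$. The only cosmetic difference is that the paper invokes the Frobenius theorem directly on the distribution $TH\cap\Delta\subseteq TH$, whereas you translate the curve to the identity and confine it explicitly to the subgroup $K=\exp(\Lie(P))$ via the orbit theorem; these are equivalent formulations of the same argument.
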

\begin{proof}
Let $V_1\oplus \ldots \oplus V_s$ be the stratification of the Lie algebra of $\G$ and $\Delta$ be the left-invariant extension of $V_1$. Let $H\subseteq\G$ be a vertical plane of codimension $k$. Let us denote by $\mathfrak{h}$ its Lie algebra. We have
    \begin{equation}\label{eq:lieofh}
    \mathfrak{h}=P_H\oplus V_2\oplus\cdots\oplus V_s,
    \end{equation}
where $P_H:=\mathfrak{h}\cap V_1$ (cf. \eqref{verticalplanesdefeq}).
The subgroup $H$ is a Carnot subgroup if and only if $P_H\oplus V_2\oplus\cdots\oplus V_s$  is a stratification of $\mathfrak{h}$, that is, if and only if
\begin{equation*}   \Lie(P_H)=P_H\oplus[\mathfrak{g},\mathfrak{g}]\stackrel{\eqref{eq:lieofh}}{=}\mathfrak{h}.
\end{equation*}
Since the latter equation holds for every $P_H\subseteq V_1$ of co-dimension smaller than $k$ if and only if $\G$ is $k$-hypergenerated, we proved the first part of the proposition.  

Assume now that $\G$ is not $k$-hypergenerated. Then, there exists a $k$-codimensional subspace $P\subseteq V_1$ such that 
   \begin{equation}
   \label{eq:not_bracket_gen_in_hyperplane}
      [\g,\g]\not\subseteq \Lie(P). 
   \end{equation}
    Define
    $$H:=\exp\left(P\oplus [\g,\g]\right).$$
    For all $g\in H$ we have that $T_gH\cap \Delta= \mathrm{d}L_g P$. Consequently, by \eqref{eq:not_bracket_gen_in_hyperplane}, the Lie algebra generated by the distribution $TH\cap \Delta\subseteq TH$ is strictly contained in $TH$, thus by Frobenius Theorem the intrinsic distance is not finite.
\end{proof}

\subsection{Weak tangents of Lipschitz maps}

To prove the implication (1)$\Rightarrow$(2) in \cref{thm:hypersurfaces} we show that, given a hypersurface equipped with the intrinsic distance, all the weak tangents of the inclusion map are isometric embeddings. First, we recall some basic concepts in metric geometry. 

We denote with $\omega$ an arbitrary non-principal ultrafilter on the set of natural numbers \cite{drutu-kapovich}. For each sequence $(t_i)_{i\in\N}$ in a compact metric space, we use $\omega$ to choose a point $\lim_\omega(t_i)_{i\in\N}$ among the accumulation points of $(t_i)_{i\in\N}$.

\begin{definition}
    The \emph{limit of a sequence of pointed metric spaces} $(X_i,d_i,x_i)_{i\in \N}$ is the pointed metric space $(X,d,x)$ defined in the following way:
   \begin{enumerate}
       \item As a set, we consider $$X:=\{(y_i)_{i\in \N} \mid y_i\in X_i, \ 
 \sup_{i\in \N}(d_i(x_i,y_i))< +\infty )\}/\sim,$$ where $(y_i)_{i\in\N}\sim (z_i)_{i\in\N}$ if and only if $\lim_\omega d_i(z_i,y_i)=0$. 
       \item As distance, we take $d( (y_i)_{i\in\N}, (z_i)_{i\in\N})=\lim_\omega d_i(z_i,y_i)$.
       \item As base point we consider $x:=(x_i)_{i\in\N}$.
   \end{enumerate}
\end{definition}

Each sequence of $L$-Lipschitz maps $f_i:(X_i,d_i,x_i)\to (Y_i,d_i,y_i)$, with $L\in \mathbb{R}$ and where here and in the following we adopt the same name for possibly different distances, induces in a natural way a \emph{limit map} $f:(X,d,x)\to (Y,d,y)$ between the limit spaces, defined by setting $f((z_i)_{i\in \N}):=(f_i(z_i))_{i\in N}$ for all $(z_i)_{i\in \N}\in X$, which is well-defined and $L$-Lipschitz.

\begin{definition}
    Let $(X,d)$ be a metric space. We say that a pointed metric space $(Y,d,y)$ is a \emph{weak tangent} of $X$ at $x$ if there exists sequence $(\epsilon_n,x_n)_{n\in \N}\in (\R\times X)^\N$, with $\lim_{n\to\infty}\epsilon_n=0$ and $\lim_{n\to\infty} x_n=x$, such that the limit of $(X,\frac{1}{\epsilon_n}d,x_n)$ is isometric to $(Y,d,y)$.
    We will write $(X,d,x)_{(\epsilon_n,x_n)_{n\in \N}}$ to denote the weak tangent obtained via the sequence $(\epsilon_n,x_n)_{n\in \N}\in (\R\times X)^\N$.
    A weak tangent of a $L$-Lipschitz map $f:(X,d)\to (Y,d)$, is the limit $$f_{(\epsilon_n,x_n)_{n\in \N}}:(X,d,x)_{(\epsilon_n,x_n)_{n\in \N}}\to (Y,d,f(x))_{(\epsilon_n,f(x_n))_{n\in \N}}$$ 
    of the sequence of maps
    $$f:(X,\frac{1}{\epsilon_n}d,x_n)\to (Y,\frac{1}{\epsilon_n}d,f(x_n)),  $$
    for some $(\epsilon_n,x_n)_{n\in \N}\in (\R\times X)^\N$, with $\lim_{n\to\infty}\epsilon_n=0$ and $\lim_{n\to\infty} x_n=x$.
    
\end{definition}
We state a key result on weak tangents of maps that we will use in the proof \cref{thm:hypersurfaces}. 
\begin{lemma}
\label{lem:weak_tangent_bilip}
Let $f:(X,d)\to (Y,d)$ be an $L$-Lipschitz map between metric spaces. Assume that there exists $\rho>0$ such that, for all $x\in X$, every weak tangent $f_{(\epsilon_n,x_n)_{n\in \N}}$ of $f$ at $x$ satisfies
\begin{equation}
\label{eq:blow_up_is_rho_bilip}
    d(f_{(\epsilon_n,x_n)_{n\in \N}}(z) ,f_{(\epsilon_n,x_n)_{n\in \N}}(w))\geq \rho d(z,w),\quad \text{for all } z,w\in X_{(\epsilon_n,x_n)_{n\in \N}}.
\end{equation}
Then, for all $\bar{\rho}< \rho$, the map $f$ is a local $\left(\max\left\{\frac{1}{\bar{\rho}},L\right\} \right)$-bi-Lipschitz embedding.
\end{lemma}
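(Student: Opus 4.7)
The plan is to proceed by contradiction, extracting a weak tangent of $f$ that would violate the standing hypothesis. Since $f$ is $L$-Lipschitz, the upper bound $d(f(z),f(w))\le \max\{1/\bar{\rho},L\}\,d(z,w)$ is automatic, so the only thing to establish is that around every point $x\in X$ the lower bound $d(f(z),f(w))\ge \bar{\rho}\,d(z,w)$ holds in some neighborhood.

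Suppose, towards a contradiction, that no such neighborhood exists at some $x\in X$. Then for each $n\in\N$ I can pick $z_n\neq w_n$ in the ball of radius $1/n$ around $x$ with
\begin{equation*}
d(f(z_n),f(w_n)) < \bar{\rho}\, d(z_n,w_n).
\end{equation*}
Let $\epsilon_n \coloneq d(z_n,w_n)$; since $z_n,w_n\to x$, one has $\epsilon_n\to 0$. I then form the weak tangent $X_\infty \coloneq X_{(\epsilon_n,z_n)_{n\in\N}}$ and the corresponding weak tangent $f_\infty\coloneq f_{(\epsilon_n,z_n)_{n\in\N}}\colon X_\infty\to Y_\infty$ of the $L$-Lipschitz map $f$. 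In the rescaled space $(X,\tfrac{1}{\epsilon_n}d,z_n)$, the distance from $z_n$ to itself is $0$ and from $z_n$ to $w_n$ is $1$, so the sequences $(z_n)_{n\in\N}$ and $(w_n)_{n\in\N}$ determine points $\hat z,\hat w\in X_\infty$ with $d(\hat z,\hat w)=1$.

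On the one hand, by construction,
\begin{equation*}
d\bigl(f_\infty(\hat z),f_\infty(\hat w)\bigr) = \lim_\omega \tfrac{1}{\epsilon_n}d(f(z_n),f(w_n)) \le \bar{\rho}.
\end{equation*}
On the other hand, the standing hypothesis, applied to the weak tangent $f_\infty$ at $x$, yields
\begin{equation*}
d\bigl(f_\infty(\hat z),f_\infty(\hat w)\bigr) \ge \rho\, d(\hat z,\hat w) = \rho,
\end{equation*}
contradicting $\bar{\rho}<\rho$.

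The step requiring the most care is checking that $\hat z,\hat w$ are legitimate elements of $X_\infty$ and that $f_\infty$ is well defined on them; both issues are addressed by the general setup recalled earlier in the paper. The former holds because in the rescaled metric the distances of $z_n$ and $w_n$ from the base point $z_n$ are uniformly bounded (indeed equal to $0$ and $1$), while the latter holds because the rescalings $f\colon (X,\tfrac{1}{\epsilon_n}d,z_n)\to (Y,\tfrac{1}{\epsilon_n}d,f(z_n))$ remain $L$-Lipschitz and therefore pass to the ultralimit. Everything else in the argument is essentially bookkeeping.
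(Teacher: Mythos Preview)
Your proof is correct and follows essentially the same route as the paper: argue by contradiction, pick two sequences $z_n,w_n\to x$ violating the lower bound, set $\epsilon_n=d(z_n,w_n)$, and pass to the weak tangent based at $z_n$ to obtain a contradiction with \eqref{eq:blow_up_is_rho_bilip}. The only differences are cosmetic (your explicit $1/n$-balls and the extra paragraph justifying that $\hat z,\hat w$ lie in $X_\infty$ and that $f_\infty$ is well defined), and your strict inequality versus the paper's non-strict one is immaterial.
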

\begin{proof}
Assume by contradiction that the lemma does not hold. Then there exists $\bar\rho<\rho$, $x\in X$ and sequences $(x_n)_{n\in\N},(y_n)_{n\in\N}$ converging to $x$ such that
\begin{equation}
\label{eq:inequality_contradiction}
    d(f(x_n),f(y_n))\leq \bar{\rho}d(x_n,y_n).
\end{equation}
Set $\epsilon_n:=d(x_n,y_n)$. We have 
\begin{eqnarray*}
    d(f_{(\epsilon_n,x_n)_{n\in \N}}((x_n)_{n\in \N}),f_{(\epsilon_n,x_n)_{n\in \N}}((y_n)_{n\in \N}))&=& \lim_\omega \frac{1}{\epsilon_n}d(f(x_n),f(y_n))\\
    &\stackrel{\eqref{eq:inequality_contradiction}}{\leq}& \lim_\omega \frac{\bar{\rho}}{\epsilon_n}d(x_n,y_n)\\
    &=&\bar{\rho}d((x_n)_{n\in \N},(y_n)_{n\in \N} ),
\end{eqnarray*}
which contradicts \eqref{eq:blow_up_is_rho_bilip}.
\end{proof}

\subsection{Weak tangents of sub-Riemannian manifolds}
In this section, we describe the weak tangents of equiregular sub-Riemannian manifolds. We follow the ideas in \cite{Bellaiche}, \cite{Margulis-Mostow}, \cite{mitchell} and \cite{gioacchino-sebastiano-enrico}, 
but in addition we allow the base point of dilations to vary. Given a point in an equiregular sub-Riemannian manifold, we prove that all weak tangents at that point are isometric to a Carnot group, which might depend on the point. We do this by describing the metric on the weak tangent as a uniform limit of sub-Riemannian metrics.

\vspace{2mm}

Let $M$ be an equiregular sub-Riemannian manifold of step $s$. Fix a stratified basis $X_1,\ldots ,X_n$ and denote with $w_i$ the weight of $X_i$ (see \cref{def:stratified_basis}). Fix $p\in M$.
We define \emph{exponential coordinates of the second kind} $\psi:\mathcal O\to M$ at $p$ by setting  
\begin{equation}
    \psi(x_1,\ldots ,x_n):=\phi_{X_1}^{x_1}\circ\ldots \circ\phi_{X_n}^{x_n}(p),
\end{equation}
for all $x_1,\ldots ,x_n\in \mathcal{O}$,  where $\mathcal{O}\subseteq \mathbb{R}^n$ is a suitable neighborhood of $0\in \R^n$ and $\phi_X^t$ denotes the flow at time $t$ along the vector field $X$. Moreover, we fix a neighborhood $U$ of $p\in M$ and an open set $A\subseteq U\times \mathbb{R}^n$ such that, for every $q\in U$, exponential coordinates of second type centered at $q$ are a diffeomorphism from $A\cap \left(\{q\}\times\mathbb{R}^n\right)$ onto $U$.
Define $\delta_\epsilon:\mathbb{R}^n\to \mathbb{R}^n$ by setting
\begin{equation*}
    \delta_\epsilon(x_1,\ldots ,x_n):=(\epsilon^{w_1}x_1,\ldots ,\epsilon^{w_n}x_n),\quad \text{ for all } x_1,\ldots ,x_n\in\mathbb{R}.
\end{equation*}
Set $A_\epsilon:=\{(q,x)\in U\times \R^n \mid (q,\delta_\epsilon(x))\in A\} \subseteq U\times \R^n$.
For $i\in\{1,\ldots ,n\}$, define $X_i^\epsilon:A_\epsilon\to \R^n$ by setting
\begin{equation}
\label{eq:def_X_eps}
    X_i^\epsilon(q,x):=\epsilon^{w_i} \left(\delta_{\frac{1}{\epsilon}}\right)^*\psi_q^*X_i(x).
\end{equation}

The following proposition ensures that the above vector fields converge locally uniformly as $\epsilon\to 0$.

\begin{proposition}
\label{prop:convergence_of_vector_fields}
For all $i\in \{1,\ldots ,n\}$, there exists $X_i^0:U\times \mathbb{R}^n \to \R^n$ such that, for all compact neighborhoods $H\subseteq \R^n$ of $0$ and for all compact sets $K\subseteq U$, there exists $\bar{\epsilon},C>0$ such that for all $\epsilon\leq \bar{\epsilon}$ we have $K\times H \subseteq A_\epsilon $ and
    \begin{equation*}
        \|X_i^\epsilon(q,x)-X_i^0(q',x)\|\leq C(d(q,q')+\epsilon), \quad \text{for all $x\in H$, for all $q,q'\in K$.}
    \end{equation*} 
    Moreover, for all $q\in U$, the vector fields $x\mapsto X_i^0(q,x)$, with $i\in\{1,\ldots ,n\}$, form a stratified algebra.
\end{proposition}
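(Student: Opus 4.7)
The plan is to run the standard construction of the nilpotent tangent (following Bellaïche and \cite{gioacchino-sebastiano-enrico}), but tracked quantitatively in the base point $q$. First I would write
\[
    \psi_q^* X_i \;=\; \sum_{j=1}^n a_j^i(q,x)\,\partial_{x_j},
\]
with $a_j^i(q,x)$ jointly smooth in $(q,x)$, so that by definition of $X_i^\epsilon$ in \eqref{eq:def_X_eps}
\[
    X_i^\epsilon(q,x) \;=\; \sum_{j=1}^n \epsilon^{w_i+w_j}\, a_j^i\!\bigl(q,\delta_{1/\epsilon}(x)\bigr)\,\partial_{x_j}.
\]
The key technical input is the weighted Taylor decomposition $a_j^i(q,x) = p_j^i(q,x) + r_j^i(q,x)$, where $p_j^i$ is a polynomial in $x$ of $(w_1,\dots,w_n)$-weighted degree exactly $w_j-w_i$ and $r_j^i$ collects all terms of strictly higher weighted degree, with both pieces depending smoothly on $q$. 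This decomposition is proved by iterating the Baker–Campbell–Hausdorff formula along the composition of flows defining $\psi_q$ and using that $X_i$ belongs to the stratum of weight $w_i$ in the filtration $\Delta^1\subseteq\Delta^2\subseteq\cdots$ at every $q\in U$; this is where equiregularity enters.

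Granted this, I would set
\[
    X_i^0(q,x) \;:=\; \sum_{j=1}^n p_j^i(q,x)\,\partial_{x_j}.
\]
Homogeneity of $p_j^i$ gives $\epsilon^{w_i+w_j}p_j^i(q,\delta_{1/\epsilon}x) = p_j^i(q,x)$, while the higher weighted order of $r_j^i$ gives, on each compact $K\times H$, an estimate $|\epsilon^{w_i+w_j} r_j^i(q,\delta_{1/\epsilon}x)|\leq C\,\epsilon$ for $\epsilon\leq\bar\epsilon$ small enough that $K\times H\subseteq A_\epsilon$, with $C$ depending only on $K$ and $H$ through bounds on finitely many derivatives of $a_j^i$. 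Writing
\[
    X_i^\epsilon(q,x)-X_i^0(q',x) \;=\; \bigl[X_i^\epsilon(q,x)-X_i^0(q,x)\bigr] \;+\; \bigl[X_i^0(q,x)-X_i^0(q',x)\bigr]
\]
and using that $X_i^0(\cdot,x)$ is smooth in $q$, hence locally Lipschitz (with Lipschitz constant uniform in $x\in H$), yields the required bound $C(d(q,q')+\epsilon)$.

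To finish, I would verify that $\{X_i^0(q,\cdot)\}_{i=1}^n$ is a stratified Lie algebra for each $q\in U$. By construction each $X_i^0(q,\cdot)$ is polynomial and $\delta_\epsilon$-homogeneous of weight $w_i$, so $[X_i^0(q,\cdot),X_j^0(q,\cdot)]$ is homogeneous of weight $w_i+w_j$. Because $X_1,\dots,X_n$ is a stratified basis, every $X_l$ with $w_l\geq 2$ is an iterated bracket of $X_1,\dots,X_m$; pulling these bracket identities back to $\R^n$, applying $\epsilon^{w_l}(\delta_{1/\epsilon})^*$, using naturality of the Lie bracket under pullback, and letting $\epsilon\to 0$ in the uniform estimate already obtained shows that the same iterated bracket identities hold for $X_1^0(q,\cdot),\dots,X_n^0(q,\cdot)$. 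Combined with homogeneity, this forces nilpotency and a stratification with strata spanned by $\{X_i^0(q,\cdot) : w_i = k\}$. The main obstacle is the weighted Taylor decomposition of $a_j^i$ with a uniform-in-$q$ remainder; the convergence and the Lie-algebraic statements are then consequences of homogeneity and of passing to the limit in bracket relations.
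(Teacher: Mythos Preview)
Your approach coincides with the paper's: express $\psi_q^*X_i$ in components, invoke Bella\"{\i}che's result that in these privileged coordinates the $j$-th coefficient has weighted order at least $w_j-w_i$, define $X_i^0$ as the weighted-homogeneous principal part, bound the remainder by $C\epsilon$ on compacta, and handle the base-point dependence by the triangle inequality together with the smooth (hence Lipschitz on $K$) dependence of $X_i^0$ on $q$. For the stratified-algebra assertion the paper simply cites Bella\"{\i}che; your sketch of passing the bracket identities of the stratified basis through the limit is essentially the argument behind that citation.

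There is, however, a sign slip in your dilation formula. Since $A_\epsilon=\{(q,x):(q,\delta_\epsilon x)\in A\}$ must contain $K\times H$ for all small $\epsilon$, the coefficients have to be evaluated at $\delta_\epsilon(x)$ (which tends to $0$), not at $\delta_{1/\epsilon}(x)$ (which leaves the coordinate chart). The correct expression is
\[
X_i^\epsilon(q,x)=\sum_{j=1}^n \epsilon^{\,w_i-w_j}\, a_j^i(q,\delta_\epsilon x)\,\partial_{x_j},
\]
and with this your stated weighted degree $w_j-w_i$ for $p_j^i$ is consistent with the homogeneity identity; as you wrote it, the identity $\epsilon^{w_i+w_j}p_j^i(q,\delta_{1/\epsilon}x)=p_j^i(q,x)$ would force degree $w_i+w_j$, contradicting your own claim. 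Once this is corrected, the remainder estimate and the rest of your outline go through exactly as stated.
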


\begin{proof}
    Let $P^k:A\to \R$ be the coefficients of $\psi_q^*X_i$:
    \begin{equation*}
        \psi_q^*X_i(x)=\sum_{k=1}^n P^k(q,x)\partial_k,\quad \text{for all } x\in A. 
    \end{equation*}
Fix $N:=\max\{w_1,\ldots ,w_n\}$.
Since the $P^k$'s are smooth, we can expand them as a Taylor series in a neighborhood of the origin
\begin{equation}
  \psi_q^*X_i(x)=\sum_{k=1}^n\left(\left(\sum_{\substack{\alpha\in \N^n \\ |\alpha|\leq N} \\
  } a_\alpha^k(q)x^\alpha \right)+ o(\|x\|^N)\right)\partial_k. 
\end{equation}
where for a multi-index $\alpha=(\alpha_1,\ldots ,\alpha_n)$ we denote $x^\alpha:=x_1^{\alpha_1}\cdot\ldots\cdot x_n^{\alpha_n} $, $|\alpha|:=\alpha_1+\cdots+\alpha_n$, and $a_\alpha^k:U\to \R$ are suitable smooth functions.
Denote $[\alpha]:=\sum_1^n \alpha_iw_i$.
By Bella\"{\i}che work \cite{Bellaiche}
, we have that $a^k_\alpha(q)=0 $ if $[\alpha]< w_k-1$. 
Define $X_i^0:U\times \mathbb{R}^n \to \R^n$ by setting
\begin{equation*}
    X_i^0(x)=\sum_{k=1}^n\left(\sum_{\substack{\alpha\in \N^n \\ [\alpha]=w_k-1}
  } a_\alpha^k(q)x^\alpha \right)\partial_k,\quad \text{for all } x\in A.
\end{equation*}
Set
$$C_1:=2\sum_{\substack{k\in\{1,\ldots ,n\},\\ \alpha\in \N^n,\\ |\alpha|\leq N}}\max\left\{a_\alpha^k(q)x^\alpha \mid  q\in K, x\in H \right\}$$
and
$$C_2:=2\max\{C_1,\mathrm{Lip}_{K\times H}(X_i^0)\},$$
where $\mathrm{Lip}_{K\times H}(X_i^0)$ denotes the Lipschitz constant of the smooth function $X_i^0$ on $H\times K$ with respect to $d\times \|\cdot\|$.
Since $x^\alpha\circ \delta_\epsilon=\epsilon^{[\alpha]}x^\alpha$ and $\left(\delta_{\epsilon}\right)^*\partial_k=\epsilon^{-w_k}\partial_k$, we have 

\begin{eqnarray*}
    X_i^\epsilon(q,x)= \sum_{k=1}^n\left(\left(\sum_{\substack{\alpha\in \N^n \\ |\alpha|\leq N}
  } \epsilon^{1+[\alpha]-w_k} a_\alpha^k(q)x^\alpha \right)+ o(\epsilon^{N-w_k+1})\right)\partial_k. 
\end{eqnarray*}
Consequently, we can choose $\bar{\epsilon}$ such that
\begin{equation*}
   \left\|X_i^\epsilon(q,x)-X_i^0(q,x) \right\|= \left\|\sum_{k=1}^n\left(\left(\sum_{\substack{\alpha\in \N^n \\ |\alpha|\leq N\\ [\alpha]>w_k-1} \\
  } \epsilon^{1+[\alpha]-w_k} a_\alpha^k(q)x^\alpha \right)+ o(\epsilon^{N-w_k+1})\right)\partial_k\right\|\leq C_1\epsilon,
\end{equation*}
for all $x\in H$ and $q\in K$.
Therefore,
\begin{eqnarray*}
    \left\|X_i^\epsilon(q,x)-X_i^0(q',x) \right\|\leq \left\|X_i^\epsilon(q,x)-X_i^0(q,x) \right\|+\left\|X_i^0(q,x)-X_i^0(q',x) \right\|\leq C_2(d(q,q')+\epsilon).
\end{eqnarray*}
For the proof that for all $q\in U$, the vector fields $x\mapsto X_i^0(q,x)$, with $i\in\{1,\ldots ,n\}$, form a stratified algebra, we refer to \cite[Propositions 5.17-5.22]{Bellaiche}.
\end{proof}
Continuing with the terminology for the setting discussed in this section, fix two compact sets $H\subseteq\R^n$, $K\subseteq U$ and $\bar{\epsilon}$ such that $ K\times H\subseteq A_\epsilon$ for all $\epsilon\leq \bar{\epsilon}$. Fix $\epsilon\leq \bar{\epsilon}$ and $q\in K$, denote with $d_{\epsilon,q}$ and $d_{0,q}$ the sub-Riemannian distances on $H$ induced respectively by the sub-Riemannainan metrics for which  $\{X_i^\epsilon(q,\cdot)\}_{i\in\{1,\ldots ,n\}, w_i=1} $ and $\{X_i(q,\cdot)^0\}_{i\in\{1,\ldots ,n\}, w_i=1} $ are orthonormal frames respectively.
An immediate consequence of \cref{prop:convergence_of_vector_fields} and \cite[Theorem C.2]{gioacchino-sebastiano-enrico} is the following result.

\begin{corollary}
\label{cor:convergence_distances}
    For all sequences $\epsilon_m\to 0$ and $q_m\to q$, the distances $d_{\epsilon_m,q_m}$ converge uniformly on compact sets to $d_{0,q}$.
\end{corollary}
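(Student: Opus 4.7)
The plan is to reduce the statement to a single application of the stability theorem for sub-Riemannian distances (Theorem C.2 of \cite{gioacchino-sebastiano-enrico}) once the appropriate uniform convergence of defining frames has been established.

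First, I would fix arbitrary sequences $\epsilon_m \to 0$ and $q_m \to q$ in $K$. The plan is to verify that the horizontal frames $\{X_i^{\epsilon_m}(q_m,\cdot)\}_{i : w_i=1}$ that define $d_{\epsilon_m,q_m}$ converge uniformly on the compact set $H$ to the horizontal frame $\{X_i^0(q,\cdot)\}_{i : w_i=1}$ that defines $d_{0,q}$. This is an immediate consequence of \cref{prop:convergence_of_vector_fields}: applying it with the compact set $K$ (which by assumption contains a tail of the sequence $(q_m)$) and with $H$, we obtain a constant $C>0$ and some threshold $\bar\epsilon$ such that
\begin{equation*}
    \sup_{x \in H}\|X_i^{\epsilon_m}(q_m,x) - X_i^0(q,x)\| \le C\bigl(d(q_m,q) + \epsilon_m\bigr)
\end{equation*}
for every $m$ with $\epsilon_m \le \bar\epsilon$. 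Since the right-hand side tends to $0$, the convergence of the defining frames on $H$ is uniform.

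Next, I would invoke Theorem C.2 of \cite{gioacchino-sebastiano-enrico}, which precisely asserts that, in the equiregular setting, uniform convergence of a sequence of horizontal frames on a compact set implies uniform convergence of the associated Carnot--Carathéodory distances on the same compact set. Applying this to the sequence of frames produced in the previous step and to the limit frame — which generates a stratified algebra, hence is bracket-generating, by the last sentence of \cref{prop:convergence_of_vector_fields} — yields that $d_{\epsilon_m,q_m}$ converges to $d_{0,q}$ uniformly on any compact subset of $H \times H$. Since the sequences $\epsilon_m$ and $q_m$ were arbitrary, this proves the claim.

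The only real subtlety I foresee is to ensure that the hypotheses of the referenced stability result genuinely apply in the slightly more general form needed here, where both the base point and the scaling parameter are varying simultaneously. However, this difficulty is absorbed precisely by the joint quantitative estimate provided by \cref{prop:convergence_of_vector_fields}, which upgrades the pointwise convergence $X_i^\epsilon(q,\cdot)\to X_i^0(q,\cdot)$ to one that is uniform in the base point as well, so no additional argument is required beyond the combination sketched above.
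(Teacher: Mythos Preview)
Your proposal is correct and is exactly the argument the paper has in mind: the paper states the corollary as ``an immediate consequence of \cref{prop:convergence_of_vector_fields} and \cite[Theorem C.2]{gioacchino-sebastiano-enrico}'', and you have simply spelled out how these two ingredients combine.
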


As a consequence of \cref{cor:convergence_distances}, we get the following.

\begin{corollary}
\label{cor:weak-tangent-of-equiregular-sR-abstract}
At every point of an equiregular sub-Riemannian manifold, there is a unique weak tangent that is isometric to a Carnot group.
\end{corollary}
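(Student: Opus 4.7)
The plan is to show that for any sequence $(\epsilon_n, x_n)_{n\in\N}$ with $\epsilon_n\to 0$ and $x_n\to p$, the ultralimit $(M, \tfrac{1}{\epsilon_n} d, x_n)_\omega$ is isometric to the same metric space $(\R^n, d_{0,p}, 0)$, where $d_{0,p}$ is the sub-Riemannian distance on $\R^n$ induced by the limit vector fields $X_i^0(p,\cdot)$ produced by \cref{prop:convergence_of_vector_fields}. Since the weight-one part of $\{X_i^0(p,\cdot)\}$ is an orthonormal frame for a stratified Lie algebra, the resulting limit space is a Carnot group $\G_p$, and uniqueness is automatic because the target depends on $p$ alone.

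First I would use the dilated exponential chart $\delta_{1/\epsilon_n}\circ \psi_{x_n}^{-1}$ to identify a neighborhood of $x_n$ in $(M, \tfrac{1}{\epsilon_n} d)$ with a neighborhood of $0$ in $\R^n$, in a way that rescales correctly with the rescaled metric. Under this identification, the rescaled Carnot--Carath\'eodory distance on $M$ transfers precisely to $d_{\epsilon_n, x_n}$: indeed, the horizontal frame for $\tfrac{1}{\epsilon_n} d$ pulls back to $\{X_i^{\epsilon_n}(x_n, \cdot)\}_{w_i = 1}$ by the very definition \eqref{eq:def_X_eps}. Here the domains of validity can be arranged to exhaust $\R^n$ as $n\to\infty$, thanks to the existence of the compact sets $H, K$ in the last statement of \cref{prop:convergence_of_vector_fields}.

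Then I would apply \cref{cor:convergence_distances}, which gives uniform convergence $d_{\epsilon_n, x_n}\to d_{0,p}$ on compact subsets of $\R^n$. Combined with a ball--box-type bound guaranteeing that each closed ball of $d_{\epsilon_n, x_n}$ is contained in a fixed Euclidean compact, this yields the desired identification of the ultralimit with $(\R^n, d_{0,p}, 0)$, via the standard fact that pointed ultralimits of length metrics which converge uniformly on compact subsets coincide with the completion of the pointwise limit.

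The main obstacle I expect is checking the uniform ball--box bound, which ensures that no escape to infinity happens in the ultralimit: one needs a constant $C>0$ such that, for $n$ large enough, the $d_{\epsilon_n, x_n}$-ball of radius $r$ around $0$ is contained in a fixed Euclidean compact, uniformly in $n$. This uniform estimate should follow from the locally Lipschitz control of $X_i^\epsilon(q, \cdot)$ in $(q, \epsilon)$ provided by \cref{prop:convergence_of_vector_fields} together with the classical ball--box theorem applied to the limit frame $X_i^0(p,\cdot)$.
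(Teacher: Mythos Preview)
Your proposal is correct and follows essentially the same route as the paper: identify $(M,\tfrac{1}{\epsilon_n}d,x_n)$ with $(\R^n,d_{\epsilon_n,x_n},0)$ via the dilated exponential chart, then invoke \cref{cor:convergence_distances}. The only cosmetic difference is in the final surjectivity step: where you appeal to a general fact about ultralimits of uniformly convergent length metrics together with a uniform ball--box bound, the paper instead observes that the limit of the charts $\varphi_m=\psi_{q_m}\circ\delta_{\epsilon_m}$ is an isometric self-embedding of the proper space $(B_R(0),d_{0,q})$, hence automatically surjective; your anticipated ball--box estimate is exactly what the paper uses (implicitly) to ensure the domain of $\varphi_m$ contains the relevant ball for all large $m$.
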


To prove \cref{cor:weak-tangent-of-equiregular-sR-abstract} we state and prove a slightly stronger statement. Indeed, using the terminology introduced in this section, we can explicitly describe the weak tangent of equiregular sub-Riemannian manifolds.

\begin{corollary}
\label{cor:weak-tangent-of-equiregular-sR}
    Fix $q\in U$. Every weak tangent of $M$ at $q$ is isometric to $(\R^n,d_{0,q},0)$.
\end{corollary}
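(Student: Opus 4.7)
The plan is to establish the stronger statement that, for every sequence $(\epsilon_n, x_n)_{n\in\N}$ with $\epsilon_n \to 0$ and $x_n \to q$, the rescaled pointed spaces $(M, \tfrac{1}{\epsilon_n} d, x_n)$ converge (in the classical pointed Gromov--Hausdorff sense) to $(\R^n, d_{0, q}, 0)$. Since this limit is independent of the sequence, every ultrafilter accumulation point coincides with it, and the corollary follows at once.

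The first step is to exhibit, for each $n$ large, an explicit pointed sub-Riemannian isometry between a subset of $(\R^n, d_{\epsilon_n, x_n})$ and a neighbourhood of $x_n$ in $(M, \tfrac{1}{\epsilon_n} d)$. The candidate map is
\begin{equation*}
\Phi_n := \psi_{x_n} \circ \delta_{\epsilon_n} \colon \R^n \supseteq \mathcal{O}_n \to M.
\end{equation*}
Rescaling the ambient sub-Riemannian metric $g$ by $1/\epsilon_n^2$ produces the distance $\tfrac{1}{\epsilon_n} d$ and turns the orthonormal horizontal frame $\{X_i\}_{w_i=1}$ into $\{\epsilon_n X_i\}_{w_i=1}$. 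A direct pull-back computation, using the action $d(\delta_{\epsilon_n})^{-1} \partial_k = \epsilon_n^{-w_k} \partial_k$ and the Taylor expansion of $\psi_{x_n}^* X_i$ recorded in the proof of \Cref{prop:convergence_of_vector_fields}, shows that $d\Phi_n^{-1}(\epsilon_n X_i)$ agrees, for $w_i = 1$, with the vector field $X_i^{\epsilon_n}(x_n, \cdot)$ defined in \eqref{eq:def_X_eps}: the factor $\epsilon_n^{w_i}$ in \eqref{eq:def_X_eps} is precisely what is needed to balance the $1/\epsilon_n^2$ rescaling of $g$ against the non-isotropic dilation $\delta_{\epsilon_n}$. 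Consequently, for every compact $H \subseteq \R^n$, eventually the restriction
\begin{equation*}
\Phi_n \colon (H, d_{\epsilon_n, x_n}, 0) \longrightarrow \bigl(\Phi_n(H), \tfrac{1}{\epsilon_n} d, x_n\bigr)
\end{equation*}
is a pointed isometry onto its image.

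The second step is to pass to the limit through \Cref{cor:convergence_distances}. Fix $R > 0$. The ball-box estimates applied to the Carnot group $\Nil(q, M)$ imply that $\overline{B}_R^{d_{0,q}}(0)$ is contained in some compact $H_R \subseteq \R^n$. By \Cref{cor:convergence_distances}, $d_{\epsilon_n, x_n} \to d_{0, q}$ uniformly on $H_R$, so for $n$ large the closed $d_{\epsilon_n, x_n}$-ball of radius $R$ around $0$ is contained in $H_R$ and converges in Gromov--Hausdorff sense, via the identity map on $H_R$, to $\overline{B}_R^{d_{0,q}}(0)$. Transporting through the isometries $\Phi_n$, the $R$-balls of $(M, \tfrac{1}{\epsilon_n} d, x_n)$ around $x_n$ Gromov--Hausdorff-converge to the corresponding balls in $(\R^n, d_{0,q}, 0)$. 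Letting $R \to \infty$ yields pointed Gromov--Hausdorff convergence of the whole rescaled spaces, hence the weak tangent, being any ultrafilter limit of the same sequence, is isometric to $(\R^n, d_{0,q}, 0)$.

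The main obstacle is the frame-matching identity in the first step: one must verify, carefully tracking the conventions for pull-back of vector fields and for metric rescaling, that the $\epsilon_n^{w_i}$ normalisation built into the definition of $X_i^{\epsilon}$ is exactly what makes $\Phi_n$ into a true sub-Riemannian isometry, rather than a mere homeomorphism with controlled distortion. Once this identification is in place, \Cref{cor:convergence_distances} makes the limit step purely metric.
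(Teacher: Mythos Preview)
Your proposal is correct and follows essentially the same approach as the paper: both define the same comparison map $\Phi_n=\psi_{x_n}\circ\delta_{\epsilon_n}$, verify it is a pointed isometry via the frame identification $\Phi_n^*(\epsilon_n X_i)=X_i^{\epsilon_n}(x_n,\cdot)$, and then invoke \Cref{cor:convergence_distances} to pass to the limit on balls of radius $R$. The only cosmetic difference is that you phrase the limit step via pointed Gromov--Hausdorff convergence (and the standard fact that this forces the ultralimit to agree), whereas the paper works directly with ultralimits and adds a short self-isometric-embedding argument to pin down the explicit limit isometry $\varphi$ used in the subsequent remark.
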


\begin{proof}
Fix a sequence $(\epsilon_m,q_m)_{m\in \N}\in (\R\times X)^\N$, with $\lim_{m\to\infty}\epsilon_m=0$ and $\lim_{m\to\infty} q_m=q$. Fix $R>0$ and denote with $B:=B_R(0)$ the ball centered at $0$ of radius $R$ with respect to the distance $d_{\epsilon_m,q_m}$.
Fix $N\in\N$ and a compact neighborhood $K$ of $q$ such that for all $m\geq N$  we have $ K\times B \subseteq A_{\epsilon_m} $ and $q_m\in K$.
Define the map $\varphi_m: (B,d_{\epsilon_m,q_m},0)\to (M,\frac{1}{\epsilon_m}d, q_m)$ by setting
\begin{eqnarray}
\label{eq:def_varphin}
    \varphi_m:=\psi_{q_m}\circ \delta_{\epsilon_m}.
\end{eqnarray}
We claim that $\varphi_m$ is an isometry of pointed metric spaces. Indeed, clearly $\varphi_m(0)=q_m$. Moreover, $\frac{1}{\epsilon_m}d$ is the sub-Riemannian distance induced by $\{ \epsilon_m X_i \}_{i\in\{1,\ldots ,n\}, w_i=1}$. Thus, since the orthonormal frame $\{X_i^{\epsilon_m}(q_m,\cdot)\}_{i\in\{1,\ldots ,n\}, w_i=1} $ for the distance $d_{\epsilon,q_m}$ is exactly $\{\varphi_m^*\epsilon_m X_i \}_{i\in\{1,\ldots ,n\}, w_i=1}$ (see \eqref{eq:def_X_eps}), we have that $\varphi_m$ is an isometric embedding. Finally, being $\varphi_m$ an isometric embedding and being the map $\psi_{q_m}$ surjective onto $U$, we must have that $\varphi_m$ is surjective onto $B(q_m,\epsilon_mR)$.
Hence the two weak tangents $(B_R(0), d_{0,q},0)$ and $(B_R,d,q)_{(\epsilon_m,q_m)_{m\in \N}}$ are isometric. 
Since the weak tangent of a sequence of isometries is an isometric embedding, and by \cref{cor:convergence_distances} the pointed metric spaces $(B,d_{\epsilon,q_m},0)$ converge to $(B_R(0), d_{0,q},0)$, the limit map of the sequence $\varphi_m$ is an isometric embedding
\begin{equation*}
    \varphi:(B_R(0), d_{0,q},0)\to (B_R,d,q)_{(\epsilon_m,q_m)_{m\in \N}}\simeq (B_R(0), d_{0,q},0),
\end{equation*}
where $B$ is the ball of radius $R$ in $(M,d,q)_{(\epsilon_m,q_m)_{m\in \N}}$. Being $\varphi$ an self isometric embedding of a proper metric space it is an isometry. Since $R>0$ was arbitrary, the corollary is proved.\end{proof}

We now consider maps between $M$ and a second equiregular sub-Riemannian manifold $M'$. To smoothen the notation, we will denote with an apex $'$ all the object in $M'$.

\begin{remark}
\label{rem:blow-up-maps}
In the proof of \cref{cor:weak-tangent-of-equiregular-sR}, we explicitly write the isometry between the weak tangent $(M,d,q)_{(\epsilon_m,q_m)_{m\in \N}}$ and $(\R^n,d_{0,q},0)$ as the limit of the maps in \eqref{eq:def_varphin}.
Let $f:M\to M'$ be a map between sub-Riemannian manifolds, and $\varphi'_m:= \psi_{f(q_m)}\circ\delta_{\epsilon_m}$.
Denote with $\varphi, \varphi'$ the limit maps of $\varphi_m$ and $\varphi'_m$ respectively.
If $f_m:= (\varphi '_m)^{-1}\circ f\circ \varphi_m= \delta_{\frac{1}{\epsilon_m}}\circ \psi_{f(q_m)}^{-1}\circ f\circ \psi_{q_m}\circ \delta_{\epsilon_m}$ converges uniformly on compact sets to a map $f_{\infty}:\R^{n}\to \R^{n'}$,
then the following diagram commutes:

\begin{equation*}
\begin{tikzcd}
{(M,d,q)_{(\epsilon_m,q_m)_{m\in \N}}} \arrow[rr, "f_{(\epsilon_m,q_m)_{n\in \N}}"]                 &  & {(M',d,f(q))_{(\epsilon_m,f(q_m))_{n\in \N}}}     \\
{(\R^{n},d_{0,q},0)} \arrow[rr, "f_\infty"] \arrow[u, "\varphi"] &  & {(\R^{n'},d_{0,f(p)},0)} \arrow[u, "\varphi'"]
\end{tikzcd}
\end{equation*}
In this case, since the maps $\varphi$ and $\varphi'$ are isometries, with a slight abuse of notation we will identify $f_{(\epsilon_m,q_m)_{m\in \N}}$ with
$f_{\infty}:\R^{n}\to \R^{n'}$ and say that $f_\infty$ is a weak tangent of $f$ at $q$. 
\end{remark}

Using \cref{rem:blow-up-maps}, we prove that contact maps between equiregular sub-Riemannian manifolds admit a unique weak tangent.

\begin{definition}
A map $f:M\to M'$ between sub-Riemannian manifolds is a \emph{contact map} if it is smooth and for all $p\in M$ there holds 
\begin{equation}
\label{eq:being_contact}
    \df f_p \left(\Delta_p\right)\subseteq \Delta'_{f(p)}.
\end{equation}
\end{definition}

Before studying the weak tangents of contact maps, we state a useful technical lemma.
\begin{lemma}
\label{lem:uniform_boundedness}
    Let $g_m:\R^n\to \R^{n'}$ a sequence of continuously differentiable maps and $(\epsilon_m,q_m)_{m\in \N}\in (\R\times X)^\N$. Assume that the sequence $q_m$ converges to $q\in \R^n$.  If on every compact set $\df g_m X_i^{\epsilon_m}(q_m,\cdot)$ is uniformly bounded for all $i\in\{1,\ldots ,n\}$, then $\df g_m$ is bounded on compact sets.
\end{lemma}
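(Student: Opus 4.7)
The plan is to express each standard basis vector $e_j \in \R^n$ as a uniformly bounded linear combination of the vectors $X_i^{\epsilon_m}(q_m, x)$, $i=1,\ldots,n$, on a given compact set, and then to apply the hypothesis coordinate-wise. The first step is to invoke \Cref{prop:convergence_of_vector_fields}: on every compact set $K \subseteq \R^n$, the vector fields $X_i^{\epsilon_m}(q_m, \cdot)$ converge uniformly, as $m \to \infty$, to the Bella\"iche limit fields $X_i^0(q, \cdot)$. By the same proposition these limits form a stratified Lie algebra of dimension $n$; identified with the canonical left-invariant frame of the tangent Carnot group $\Nil(q, M) \cong \R^n$ in exponential coordinates of the second kind, they evaluate to a basis of $\R^n$ at every point.

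Next, we set $A_m(x)$ and $A_0(x)$ to be the $n \times n$ matrices whose $i$-th column is $X_i^{\epsilon_m}(q_m, x)$ and $X_i^0(q, x)$ respectively. Continuity of $A_0$ together with pointwise invertibility will yield $|\det A_0(x)| \geq c > 0$ uniformly on $K$. Uniform convergence $A_m \to A_0$ on $K$ then forces $|\det A_m(x)| \geq c/2$ on $K$ for $m$ large, and, combined with the uniform boundedness of the entries of $A_m$, gives a uniform bound $\|A_m(x)^{-1}\| \leq C$ on $K$, independent of $m$. Writing $e_j = \sum_{i} [A_m(x)^{-1}]_{ij}\, X_i^{\epsilon_m}(q_m, x)$ and applying $\df(g_m)_x$ yields
\begin{equation*}
  \df(g_m)_x(e_j) = \sum_{i=1}^n [A_m(x)^{-1}]_{ij}\, \df(g_m)_x\bigl(X_i^{\epsilon_m}(q_m, x)\bigr).
\end{equation*}
Both the coefficients and the vectors on the right are uniformly bounded on $K$ (the former by the previous step, the latter by hypothesis), whence $\|\df(g_m)_x\|$ is uniformly bounded on $K$, as required.

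The delicate point to handle carefully is showing that $X_1^0(q, x), \ldots, X_n^0(q, x)$ span $\R^n$ at \emph{every} $x$, not only at $x = 0$, where linear independence is essentially tautological from the construction. We will deduce this from the structure of $\nil(q, M)$ as a stratified Lie algebra: its $n$ generators, regarded as left-invariant vector fields on the simply connected Carnot group $\Nil(q, M) \cong \R^n$, are pointwise linearly independent at every point of the group, exactly as any basis of the Lie algebra of a Lie group is. The remainder of the argument is a standard compactness plus continuity reasoning applied to the matrices $A_m(x)$, and should not present any additional difficulty.
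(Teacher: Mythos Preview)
The paper states \Cref{lem:uniform_boundedness} without proof, so there is no argument to compare against. Your proposal is correct and is the natural way to fill the gap: pass from the moving frame $\{X_i^{\epsilon_m}(q_m,\cdot)\}_i$ to the standard frame via the change-of-basis matrices $A_m(x)$, control $A_m(x)^{-1}$ uniformly by using the uniform convergence $A_m\to A_0$ from \Cref{prop:convergence_of_vector_fields} together with the pointwise invertibility of $A_0$, and conclude. Your justification that the limit fields $X_1^0(q,\cdot),\ldots,X_n^0(q,\cdot)$ are linearly independent at \emph{every} $x$---because they are a basis of $\nil(q,M)$ realized as left-invariant vector fields on the group $\Nil(q,M)\cong\R^n$---is exactly the right point to highlight.

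Two minor remarks. First, your appeal to \Cref{prop:convergence_of_vector_fields} tacitly assumes $\epsilon_m\to 0$, which is not written in the hypotheses of the lemma but is the only regime in which the lemma is invoked (inside the proof of \Cref{prop:weak_contact}); this is harmless in context. Second, the bound $\|A_m(x)^{-1}\|\leq C$ is obtained ``for $m$ large''; for the finitely many remaining indices one simply notes that each $X_i^{\epsilon_m}(q_m,\cdot)$ is by definition a pullback of the frame $\{X_i\}$ by a diffeomorphism, hence already a frame, so each individual $A_m^{-1}$ is continuous and thus bounded on the compact set. This closes the argument completely.
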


The following proposition shows that the weak tangent of contact maps at a point is unique and it is a group homomorphism.


\begin{proposition}
\label{prop:weak_contact}

Let $f:M\to M'$ to be a contact map between equiregular sub-Riemannian manifolds. At every point $p\in M$, the map $f$ has a unique weak tangent $P_{f,p}:(\R^{n},d_{0,p},0)\to (\R^{n'},d_{0,f(p)},0) $, that is the unique group homomorphism satisfying 

\begin{equation}
\label{eq:formula_for_Pansu_differential}
  \df (P_{f,p})_0(X_i^0(0)) = \pi_{w_i}((\df \tilde{f_p})_0(X_i^1(0))),\quad \text{ for all } i\in\{1,\ldots ,n\}.  
\end{equation}
where $\tilde{f_q}:= \psi_{f(q)}^{-1}\circ f\circ \psi_q$ for all $q\in M$, and $\pi_{w_i}$ is the projection to the homogeneous component of weight $w_i$.
In particular, when $M'$ is a Carnot group, we have
\begin{equation}
\label{eq:formula-pansu-target-Carnot}
    \df (P_{f,p})_0(X_i^0(0)) = \pi_{w_i}((\df f)_p X_i(p)),\quad \text{ for all } i\in\{1,\ldots ,n\}. 
\end{equation}
\end{proposition}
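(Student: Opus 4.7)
Following \cref{rem:blow-up-maps}, I reduce the statement to showing that the sequence of smooth maps
\[
f_m := \delta_{1/\epsilon_m}\circ \tilde f_{q_m}\circ \delta_{\epsilon_m}\colon \R^n \to \R^{n'}
\]
converges uniformly on compact sets to a Lie group homomorphism $P_{f,p}\colon \Nil(p,M)\to \Nil(f(p),M')$ satisfying \eqref{eq:formula_for_Pansu_differential}, and that the limit does not depend on the sequence $(\epsilon_m,q_m)$. The formula \eqref{eq:formula-pansu-target-Carnot} for Carnot targets is then the natural specialization under the identification $T_{f(p)}M'\cong \nil(f(p),M')$ via left translation, since on a Carnot group $\tilde f_p$ differs from $f$ only by left translations, which are absorbed by the projection $\pi_{w_i}$.

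The first step is subsequential convergence by Arzelà--Ascoli. Being smooth, $f$ is locally Lipschitz between the sub-Riemannian manifolds, and since source and target distances are both rescaled by $1/\epsilon_m$, the maps $f_m$ are uniformly locally Lipschitz and satisfy $f_m(0)=0$. The limit is identified through a weighted Taylor expansion of $\tilde f_{q_m}$ at $0$. Following Bellaïche's analysis (as in the proof of \cref{prop:convergence_of_vector_fields}), the contact condition $\df f(\Delta^j)\subseteq (\Delta')^j$ translates into vanishing of the Taylor coefficients $a_\alpha^k(q) \equiv 0$ whenever $[\alpha]<w_k'$, which precisely cancels the singular prefactor in
\[
f_m^k(x) \;=\; \sum_\alpha \epsilon_m^{[\alpha] - w_k'}\, a_\alpha^k(q_m)\, x^\alpha.
\]
As $m\to\infty$ only the terms with $[\alpha]=w_k'$ survive, producing the polynomial limit $f_\infty^k(x) = \sum_{[\alpha]=w_k'}a_\alpha^k(p)\,x^\alpha$, which depends only on $p$. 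This proves uniqueness of the subsequential limit, hence convergence of the full sequence. Evaluating $\df(f_\infty)_0$ on $X_i^0(0)=X_i^1(0)=\partial_i$ (which coincide at the origin in exponential coordinates of the second kind) gives exactly \eqref{eq:formula_for_Pansu_differential}.

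Next, I verify that $f_\infty$ is a Lie group homomorphism. The polynomial $f_\infty$ intertwines dilations, $f_\infty\circ\delta_\lambda = \delta_\lambda\circ f_\infty$, because the rescaled sequence $\epsilon_m\mapsto\lambda\epsilon_m$ produces the same limit by uniqueness. Together with $f_\infty(0)=0$ and the fact that $\df (f_\infty)_0$ sends the first layer of $\nil(p,M)$ into the first layer of $\nil(f(p),M')$ (the contact condition), a standard Carnot-group argument forces $f_\infty$ to be a Lie group homomorphism: its differential at the identity induces a Lie algebra homomorphism whose exponential agrees with $f_\infty$, because both are graded polynomial maps coinciding on the generating first layer. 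Uniqueness of $P_{f,p}$ then follows, since a homomorphism between simply connected nilpotent Lie groups is determined by its differential on the first layer.

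\textbf{Main obstacle.} The crux is showing that the linear map on the first layer prescribed by \eqref{eq:formula_for_Pansu_differential} extends to a Lie algebra homomorphism $\nil(p,M)\to\nil(f(p),M')$, i.e., that it respects the bracket relations. This requires exploiting the contact condition beyond its role in ensuring convergence: the iterated commutators of the vectors $\pi_{w_i}((\df\tilde f_p)_0 X_i^1(0))$ in the target must reproduce the images under the formula of the corresponding iterated brackets in $\nil(p,M)$, and this bracket compatibility is precisely what contactness delivers. Once verified, the passage from Lie algebra to group homomorphism is routine via the Baker--Campbell--Hausdorff formula, and the graded polynomial $f_\infty$ must coincide with this homomorphism.
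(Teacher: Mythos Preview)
Your convergence argument is correct and offers a legitimate alternative to the paper's: you expand the coordinate functions $\tilde f^k_{q_m}$ in a weighted Taylor series and use that $a_\alpha^k\equiv 0$ whenever $[\alpha]<w_k'$ (which follows from contactness, via local Lipschitzness for the CC distances) to obtain uniform convergence of $f_m$ to the polynomial $f_\infty$ with $f_\infty^k(x)=\sum_{[\alpha]=w_k'}a_\alpha^k(p)\,x^\alpha$, independent of the sequence. The paper instead tracks the pushforwards $\mathrm{d}f_m\bigl(X_i^{\epsilon_m}(q_m,\cdot)\bigr)$ of the rescaled frame and passes to the limit at that level; both routes identify the same object.

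The genuine gap is in your homomorphism step. Dilation-equivariance together with the value of $\mathrm{d}(f_\infty)_0$ does \emph{not} pin down $f_\infty$ as the group homomorphism sharing that differential. On $\mathbb{H}^1$ in exponential coordinates $(x,y,t)$, the map $\phi(x,y,t)=(x,y,t+xy)$ is a dilation-equivariant polynomial with $\phi(0)=0$ and $\mathrm{d}\phi_0=\mathrm{id}$, yet $\phi$ is not a homomorphism. So ``both are graded polynomial maps coinciding on the generating first layer'' is not a valid criterion, and your final assertion that $f_\infty$ must coincide with the exponentiated Lie-algebra map is unjustified. This is exactly what the paper's vector-field computation buys: it shows that $\mathrm{d}f_\infty(X_i^0)$ is $f_\infty$-related to the left-invariant field $\sum_{w_k'=w_i}a_i^k(p,0)\,(X_k')^0$ on the target, a \emph{global} statement (valid at every point, not just at $0$). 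A smooth map whose differential carries a left-invariant frame to $f_\infty$-related left-invariant fields is automatically a Lie group homomorphism, so the issue never arises there. To repair your argument you would need either this related-field computation or an external rigidity input (for instance, that a Lipschitz homogeneous map between Carnot groups fixing the identity is a homomorphism), neither of which you have supplied.
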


\begin{proof}
    By \cref{rem:blow-up-maps} it is enough to show that $f_{p,\epsilon}:=\delta_{\frac{1}{\epsilon}}\circ \psi_{f(q)}^{-1}\circ f\circ \psi_q\circ \delta_\epsilon: \R^n\to \R^{n'}$ converges uniformly on compact sets to a group homomorphism $P_{f,p}:\R^{n}\to\R^{n'}$ satisfying \eqref{eq:formula_for_Pansu_differential} as $\epsilon$ tends to $0$ and $q$ tends to $p$.
Fix two compact sets $H\subseteq \R^n$ and $K\subseteq U$, with $p\in K$. By \eqref{eq:being_contact}, for all $i\in\{1,\ldots ,n\}$, we have
\begin{equation*}
    \df f_qX_i\in (\Delta')^{(w_i)}_{f(q)}, \quad \text{for all } q\in M.
\end{equation*}
Consequently, we can write $\df \tilde f_{q}(X_i^1)$, where $X_i^1$ is $X_i^\epsilon$ for $\epsilon=1$, as a combination of vector fields of weight smaller than $w_i$:
\begin{equation}
\label{eq:primo_conto_differenziale_f-tilde}
    \df \tilde f_{q}(X_i^1(q,x))=\sum_{\substack{k\in\{1,\ldots ,n'\}\\ w'_k\leq w_i}} a_i^k(q,x)X'_k(f(q),\tilde{f}(x))\quad\text{for all } (q,x)\in A,
\end{equation}
    for some smooth functions $a_i^1,\ldots ,a_i^k: A\to \R$, for all $i\in \{1,\ldots ,n\}$.
From the above equation we get

\begin{eqnarray}
    \label{eq:dove_va_X-i}
\df \delta_{\frac{1}{\epsilon}}\df \tilde{f}_q \df \delta_\epsilon(X_i^\epsilon(q,x))&\stackrel{\eqref{eq:def_X_eps}}{=}&  \epsilon^{w_i} \df \delta_{\frac{1}{\epsilon}}\df \tilde{f}_q (X_i^1(q,\delta_\epsilon(x)))\nonumber \\ 
&\stackrel{\eqref{eq:primo_conto_differenziale_f-tilde}}{=}& \epsilon^{w_i} \df  \sum_{\substack{k\in\{1,\ldots ,n'\}\\ w'_k\leq w_i}} a_i^k(q,\delta_\epsilon(x)) \df\delta_{\frac{1}{\epsilon}}X'_k(f(q),\tilde{f}(\delta_\epsilon(x)))\\ \nonumber
&=&    \sum_{\substack{k\in\{1,\ldots ,n'\}\\ w'_k\leq w_i}}\epsilon^{w_i-w'_k} a_i^k(q,\delta_\epsilon(x)) (X'_k)^\epsilon(f(q),\delta_{\frac{1}{\epsilon}}\circ\tilde{f}\circ\delta_\epsilon(x)).
\end{eqnarray}
Fix now a sequence $(\epsilon_m,q_m)_{m\in \N}\in (\R\times X)^\N$, with $\lim_{m\to \infty}\epsilon_m=0$ and $\lim_{m\to\infty} q_m=p$, such that $f_{q,\epsilon}$ converges uniformly on compact sets to some $f_\infty$. 
The right-hand side of equation \eqref{eq:dove_va_X-i} converges to
\begin{equation*}
Y_i(q,x):=\sum_{\substack{k\in\{1,\ldots ,n'\}\\ w'_k = w_i}} a_i^k(q,0)(X'_k)^0(f(q),f_\infty(x))
\end{equation*}
 as $n$ tends to $\infty$. As a consequence, by \cref{lem:uniform_boundedness} $\df f_{q_m,\epsilon_m}$ is uniformly bounded.
 Thus, by \cref{prop:convergence_of_vector_fields} we have
 $\lim_{m\to\infty} \df f_{q_m,\epsilon_m}(X_i^\epsilon(q_m,x)-X_i^0(q,x))=0$. Consequently,

 \begin{eqnarray*}
     \| \df f_{q_m,\epsilon_m}X_i^0(x) -Y_i(p,x)\|&\leq & 
 \| \df f_{q_m,\epsilon_m}X_i^\epsilon(x) - \df f_{q_m,\epsilon_m}X_i^0(x)\|+\|\df f_{q_m,\epsilon_m}(X_i^\epsilon(q_m,x))-Y_i(p,x))\| \\
 &\stackrel{m\to\infty}{\to}& 0
 \end{eqnarray*}
Since
\begin{equation*}
    Y_i(p,0)=\pi_{w_i}((\df \tilde{f_q})_0(X_i^1(0))),
\end{equation*}
we proved that $\df f_{q_m,\epsilon_m}$ converges uniformly to a map satisfying \eqref{eq:formula_for_Pansu_differential}, thus $f_\infty$ is differentiable and it is the unique map such that
\begin{equation*}
    \df f_{\infty}X_i^0(x)=Y_i(p,x).
\end{equation*}
We finally remark that the vector fields $Y_i(p,\cdot)$'s are a linear combination of the $X'_i$'s and therefore they are left-invariant vector fields for the group structure on $\R^{n'}$. It is an easy exercise to check that a smooth map between Lie groups whose differential sends left-invariant vector fields to left-invariant vector fields is a group homomorphism. 
When $M'$ is a Carnot group, we can identify $\R^{n'}$ with $M'$ itself. We have then that $\df \psi'_{f(p)}=\id$. Hence, we get \eqref{eq:formula-pansu-target-Carnot} by \eqref{eq:formula_for_Pansu_differential} observing that $\df \tilde{f}_0(X_i^1)=\df f_p X_i(p) $
\end{proof}

\begin{definition}
For $k,m\in \mathbb{N}$ define $\mathcal{I}_m^k:=\{(i_1,\ldots ,i_k)\in \mathbb{N}^k \mid i_1,\ldots ,i_k\in \{1,\ldots ,m\} \}$.
 Let $M$ be a smooth manifold and let $X_1,\ldots ,X_m\in\Gamma(TM)$. For $k\in\N$ and for a multi-index $J=(j_1,\ldots ,j_k)\in \mathcal{I}_m^k$, we define the {\em iterated bracket} associated to $J$ as
	\begin{equation}
 \label{eq:iterated_bracket}
		X_J:=[X_{j_1},[ \ldots ,[  X_{j_{k-1} }, X_{j_k}]\ldots ]].
	\end{equation} 
\end{definition}



%

We are ready to prove the main result of this section.
\begin{proof}[Proof of \Cref{thm:hypersurfaces}]
Since the implication (2)$ \Rightarrow $(1) follows from \Cref{prop:non-hyper-gen-hyperplane}, we are left to prove (1)$\Rightarrow$(2). 
Let $\G$ be a $k$-hypergenerated group. Notice that $\G$ is $k'$-hypergenerated for every $k'\leq k$, thus we do not lose generality if we prove that the intrinsic and the restricted distance are bi-Lipschitz equivalent on surfaces that have codimension exactly $k$.
Let $S\subseteq \G$ be an embedded surface of codimension $k$. 
We start proving that $\Delta_S:=TS\cap \Delta$ is bracket-generating inside $TS$, and therefore that the intrinsic distance $d_i$ is finite. Fix $p\in S$. Up to performing a left-translation we can assume without loss of generality that $p=\id$.
Since $S$ is non-characteristic we can choose a left-invariant frame $X_1,\ldots ,X_{m}$ of $\Delta$ such that $$\spann\{X_1(\id),\ldots ,X_r(\id)\}=T_{\id}S\cap \Delta_{\id},$$ with $r:=m-k$. Being $S$ a smooth surface, there exists a neighborhood $U\subseteq S$ of $p$ and smooth functions $a_i^j:U\to \mathbb{R}$, with $i\in \{1,\ldots ,r\}, j\in \{r+1,\ldots ,m\}$, such that, if 
\begin{equation}
    \label{eq:form_Y_i-first}
    Y_i:=X_i+\sum_{j=r+1}^m a_i^jX_j,
\end{equation} 
for all $i\in\{1,\ldots ,r\}$, then $Y_1,\ldots ,Y_r$ form a frame for the distribution $\Delta_S$ on $U$.
Notice that for all $i\in \{1,\ldots ,r\}, j\in \{r+1,\ldots ,m\}$, we have $a_i^j(\id)=0$. We claim that for every multi-index $J\in \mathcal{I}_{r}^h$ there exist $n_J\in\mathbb{N}$, $R_J,T^1_J,\ldots T_J^{n_J}\in\Gamma(TU)$  and $\alpha^1_J,\ldots ,\alpha_J^{n_J}:U\to \mathbb{R}$ smooth such that
\begin{eqnarray}
\label{eq:form_Yj_first}
    Y_J&=&X_J+R_J+\sum_{i=1}^{n_J}\alpha^i_J T^i_J,
 \\
\label{eq:form_Yj_second}
    R_J(q)&\in& \df L_q (V_1\oplus\ldots \oplus V_{h-1})\quad \text{ for all } q\in U,\\
\label{eq:form_Yj_third}
    T^i_J(q)&\in& \df L_q (V_1\oplus\ldots \oplus V_{h}) \quad \text{ for all } q\in U, \text{ for all } i\in\{1,\ldots ,n_J\},\\
\label{eq:form_Yj_fourth}
    \alpha^i_J(\id)&=&0\quad \text{ for all } i\in\{1,\ldots ,n_J\},
\end{eqnarray}
where $Y_J$ and $X_J$ are the iterated brackets of the $Y_j's$ and $X_j's$ respectively (see \eqref{eq:iterated_bracket}).
We prove the claim by induction over $h$.
For $j\in \{1,\ldots ,r\}$ we can choose $R_j:=0$, $n_j:=m-r$, $\alpha_j^i:=a_{j}^{r+i}$, $T_j^i:=X_{r+i}$. Thus the claim for $h=1$ is proved.
Assume now that the statement is true for some $h$ and let $J=(j_1,\ldots ,j_{h+1})\in\mathcal{I}_{r}^{h+1}$. Set $J':=(j_2,\ldots ,j_{h+1})$. By inductive hypotheses we can write
\begin{equation}
\label{eq:to_prove_form_Y}
Y_J=[Y_{j_1},Y_{J'}]=[Y_{j_1},X_{J'}+R_{J'}+\sum_{i=1}^{n_{J'}}\alpha_{J'}^iT_{J'}^i],
\end{equation}
for some $n_{J'},R_{J'},T_{J'}^1,\ldots ,T_{J'}^{n_{J'}},\alpha_{J'},\ldots ,\alpha_{J'}^{n_{J'}}$ for which \eqref{eq:form_Yj_first},\eqref{eq:form_Yj_second},\eqref{eq:form_Yj_third}, and \eqref{eq:form_Yj_fourth} hold. Thus
\begin{eqnarray*}
    Y_J&\stackrel{\eqref{eq:to_prove_form_Y}}{=}&[Y_{j_1},X_{J'}+R_{J'}+\sum_{i=1}^{n_{J'}}\alpha^i_{J'} T^i_{J'}]
    \\
    &=& [ Y_{j_1},X_{J'}]+[Y_{j_1},R_{J'}]+\sum_{i=1}^{n_{J'}}\left( Y_{j_1}\alpha_{J'}^i \right) T^i_{J'}+\sum_{i=1}^{n_{J'}}\alpha^i_{J'} [Y_{j_1},T^i_{J'}]\\
    &\stackrel{\eqref{eq:form_Y_i-first}}{=}& 
X_J+\sum_{l=r+1}^{m} a_{j_1}^l[X_l,X_{J'}] -\sum_{l=r+1}^{m} \left(X_{J'}a_{j_1}^l\right)X_{l}+[Y_{j_1},R_{J'}]\\ && +\sum_{i=1}^{n_{J'}}\left( Y_{j_1}\alpha_{J'}^i \right) T^i_{J'}+
\sum_{i=1}^{n_{J'}}\alpha^i_{J'}[X_{j_1},T^i_{J'}]+\sum_{l=r+1}^m\sum_{i=1}^{n_{J'}}a_{j_1}^l\alpha^i_{J'} [X_{l},T^i_{J'}],\\
&& -\sum_{l=r+1}^m\sum_{i=1}^{n_{J'}} \alpha^i_{J'}\left(T^i_{J'}a_{j_1}^l\right) X_{l}.
\end{eqnarray*}
Setting
\begin{eqnarray*}
    n_J&:=& (m-r+1)n_{J'}+m-r,\\
    R_J&:=&-\sum_{l=r+1}^{m} \left(X_{J'}a_{j_1}^l\right)X_l+[Y_{j_1},R_{J'}]+ \\ && \sum_{i=1}^{n_{J'}}\left( Y_{j_1}\alpha_{J'}^i \right) T^i_{J'}-\sum_{l=r+1}^m\sum_{i=1}^{n_{J'}} \alpha^i_{J'}\left(T^i_{J'}a_{j_1}^l\right) X_{l},\\
T_J^l&:=&[X_l,X_{J'}] ,\quad l\in\{1,\ldots ,m-r\},\\
T_J^{m-r+i}&:=& [X_{j_1},T^i_{J'}], \quad \text{ for all } i\in\{1,\ldots ,n_{J'}\}\\
T_J^{m-r+n_{J'}l+i}&:=& [X_{l+r},T^i_{J'}],\quad \text{ for all } l\in\{1,\ldots ,m-r\},i\in\{1,\ldots ,n_{J'}\}\\
    \alpha_J^l&:=&a_{j_1}^l ,\quad l\in\{1,\ldots ,m-r\},\\
\alpha_J^{m-r+i}&:=& \alpha^i_{J'}, \quad \text{ for all } i\in\{1,\ldots ,n_{J'}\}\\
\alpha_J^{m-r+n_{J'}l+i}&:=& a_{j_1}^l\alpha^i_{J'},\quad \text{ for all } l\in\{1,\ldots ,m-r\},i\in\{1,\ldots ,n_{J'}\},\\
\end{eqnarray*}
it is an exercise to check that \eqref{eq:form_Yj_first},\eqref{eq:form_Yj_second},\eqref{eq:form_Yj_third}, and \eqref{eq:form_Yj_fourth} hold, thus the proof of the claim is concluded.
For all $h\in \mathbb{N}$ we have 
\begin{equation}
\label{eq:inclusion_between_distributions}
    \left(\Delta_S^h\right)_\id\subseteq\left( \Delta^h\right)_\id\cap T_\id S.
\end{equation}
We prove by induction over $h$ that
\begin{equation}
\label{eq:claim_distributions_are_equal}    \left(\Delta_S^h\right)_\id =\left( \Delta^h\right)_\id\cap T_\id S.
\end{equation}
For $h=1$ the claim \eqref{eq:claim_distributions_are_equal} is clear being the inclusion a contact map. 
Assume that \eqref{eq:claim_distributions_are_equal} holds for some 
$h\in\mathbb{N}$ with $h\geq 1$.
By \eqref{eq:inclusion_between_distributions} it is enough to prove
\begin{equation}
\label{eq:distributions_have_equal_dimensions}
\dim\left(\left(\Delta_S^{h+1}\right)_\id \right)=\dim\left(\left( \Delta^{h+1}\right)_\id \cap T_\id S\right).
\end{equation}
For every multi-index $J\in \mathcal{I}_m^{h+1}$, by \eqref{eq:form_Yj_first} and \eqref{eq:form_Yj_fourth} we have that $Y_J(\id)=X_J(\id)+R_J(\id)$ with $R_J$ satisfying \eqref{eq:form_Yj_second}, thus 
\begin{eqnarray*}   \dim\left(\left(\Delta_S^{h+1}\right)_\id\right)&\geq \dim\left(\left(\Delta_S^{h}\right)_\id\right)+\dim(\mathrm{span}\{X_J(\id) \mid J\in\mathcal{I}_r^{h+1}\})\\ 
&= \dim\left(\left(\Delta_S^{h}\right)_\id\right)+\dim(V_{h+1})\\
&\stackrel{\eqref{eq:claim_distributions_are_equal}}{\geq}\dim\left(\left( \Delta^{h+1}\right)_\id\cap T_\id S\right),
\end{eqnarray*}
where in the first inequality we used that $R_J \in \left(\Delta_S^{h}\right)$ for all $J\in\mathcal{I}_r^{h+1} $ and in the second equality we used that $\mathrm{span}\{X_J \mid J\in\mathcal{I}_r^{h+1}\}=V_{h+1}$ since the group is $k$ generating and $r=m-k$.
Consequently, we have that \eqref{eq:distributions_have_equal_dimensions} holds and that the claim $\eqref{eq:claim_distributions_are_equal}$ is proved.
In particular, since $\Delta^s=TG$ for some $s\in\mathbb{N}$, by \eqref{eq:claim_distributions_are_equal} we have that $\Delta_S$ is bracket-generating.
Denote with $f:S\to \G$ the canonical inclusion, where $S$ is equipped with the intrinsic distance $d_i$. Since $d_i\leq d_r$ the map $f$ is $1$-Lipschitz. 
By \Cref{prop:weak_contact}, $f$ has a unique weak tangent  $P_{f,p}$ at $p\in S$, and for all $i\in\{1,\ldots ,m\}$ we have 
\begin{eqnarray}
\label{eq:conto_Pansu}
    \left(P_{f,p}\right)_*(Y_i^0)\stackrel{\eqref{eq:formula-pansu-target-Carnot}}{=}\pi_1(\df f_p Y_i(p))
    \stackrel{\eqref{eq:form_Y_i-first}}{=}&\pi_1(X_i(p))
    = X_i(p),
\end{eqnarray}
where $Y_i^0$ is the vector-field given by \cref{prop:convergence_of_vector_fields}.
Thus, being $\G$ $k$-hypergenerated, and $r=m-k$, the Lie algebra generated by the image of the $(P_{f,p})_*$ is $\Span(X_1,\ldots ,X_r)\oplus [\mathfrak{g},\mathfrak{g}]$. Being $P_{f,p}$ injective by \eqref{eq:conto_Pansu}, the Lie algebra generated by $\{Y_i^0\}_{i\in\{q,\ldots ,m\}}$ is isomorphic to $\Span(X_1,\ldots ,X_r)\oplus [\mathfrak{g},\mathfrak{g}]$ via
the map $P_{f,p}$. 
Since the map $P_{f,p}$ is an isometric embedding, by \cref{lem:weak_tangent_bilip} we get that the map $f$ is locally $C$-bi-Lipschitz for all $C>1$.
\end{proof}

\begin{remark}
    In the proof \cref{thm:hypersurfaces}, the injectivity of the Pansu differential of the inclusion is a key ingredient. An arbitrary contact injective map may not be a bi-Lipschitz embedding. For example, if $M$ is a sub-Riemannian manifold and $p\in M$, there exists a neighborhood of $p$ that can be embedded with a contact map in a Carnot group of step $2$ (see \cite[Proposition 6.5.1]{Mongomery_book}). Nontheless, if the step of $M$ is greater than $2$, this embedding is not a bi-Lipschitz map.
\end{remark}

\bibliography{hypergenerated}

\end{document}